\begin{document}
\title{On slightly degenerate fusion categories}
\author{Zhiqiang Yu}
\date{}
\maketitle

\newtheorem{theo}{Theorem}[section]
\newtheorem{prop}[theo]{Proposition}
\newtheorem{lemm}[theo]{Lemma}
\newtheorem{coro}[theo]{Corollary}
\theoremstyle{definition}
\newtheorem{defi}[theo]{Definition}
\newtheorem{example}[theo]{Example}
\newtheorem{remk}[theo]{Remark}
\newtheorem{conj}[theo]{Conjecture}

\newcommand{\A}{\mathcal{A}}
\newcommand{\B}{\mathcal{B}}
\newcommand{\C}{\mathcal{C}}
\newcommand{\D}{\mathcal{D}}
\newcommand{\E}{\mathcal{E}}
\newcommand{\I}{\mathcal{I}}
\newcommand{\M}{\mathcal{M}}
\newcommand{\N}{\mathcal{N}}
\newcommand{\Q}{\mathcal{O}}
\newcommand{\Z}{\mathbb{Z}}
\theoremstyle{plain}

\abstract
In this paper, we first show for a slightly degenerate pre-modular fusion category $\C$
that  squares of dimensions of simple objects divide half of the dimension of $\C$,
and that slightly degenerate fusion categories of FP-dimensions $2p^nd$
and $4p^nd$ are nilpotent, where $p$ is an odd prime and $d$ is an odd square-free integer. Then we classify slightly degenerate generalized Tambara-Yamagami fusion categories and  weakly integral slightly degenerate fusion categories of particular dimensions.

\bigskip
\noindent {\bf Keywords:} (De-)equivariantization; Slightly degenerate fusion category; Tannakian category; Weakly group-theoretical fusion category

Mathematics Subject Classification 2010: 18D10 $\cdot$ 16T05
\section{Introduction}
Throughout this paper, we always assume $k$ is an algebraically closed field of characteristic zero, $k^{*}:=k\backslash {\{0}\}$, $\mathbb{Z}_r:=\mathbb{Z}/r$, $r\in \mathbb{N}$. Categories are assumed to be semisimple $k$-linear finite abelian categories. For any finite abelian category $\C$, let $\Q(\C)$  denote set of isomorphism classes of simple objects of $\C$, the cardinal of $\Q(\C)$ is called rank of $\C$.

Pre-modular fusion category $\C$ is slightly degenerate (or super modular) if the  M\"{u}ger center $\C'$ is equivalent to $sVec$ as symmetric fusion category, where $sVec$ is the category of finite-dimensional super vector spaces. Slightly degenerate fusion categories   play a crucial role in classification of braided fusion categories, for example,  let $\E=Rep(G)\subseteq\C'$  be a maximal Tannakian subcategory, then the de-equivariantization $\C_G$ of braided fusion category $\C$ by $\E$ is either non-degenerate or slightly degenerate \cite{DrGNO2},  moreover it  is also useful in physics, see \cite{BGHNPRW,BGNPRW1,BGNPRW} for references.

The problem of classifying slightly degenerate fusion categories has been considered in several papers. For pointed slightly degenerate fusion categories, they are Deligne tensor products of $sVec$ and pointed non-degenerate categories \cite{ENO2}. If the  rank of a slightly degenerate fusion category is equal to $2$, then obviously it is equivalent to $sVec$. Grothendieck equivalence classes of slightly degenerate fusion category of rank $4$ and $6$ have been classified in \cite{B,BGNPRW}, respectively. In  \cite{DNO}, the authors studied the Witt group of slightly degenerate fusion categories, and the structure of  completely anisotropic slightly degenerate fusion categories  are obtained.

In this paper, we  classify slightly degenerate fusion categories of FP-dimensions like $2p^nd$, $4p^nd$, $2^m$ $(m\leq5)$, $8d$ and $16d$, where $p$ is an odd prime and $d$ is an odd square-free integer, and also we study slightly degenerate generalized Tambara-Yamagami fusion categories \cite{Lip,TY}. Most of these categories are  Deligne tensor products of $sVec$ and non-degenerate fusion categories of smaller FP-dimensions, the rest of them are Deligne tensor products of pointed non-degenerate fusion categories and $\Z_2$-equivariantizations of  generalized Tambara-Yamagami fusion categories. In particular, integral slightly degenerate fusion categories of FP-dimensions less than $64$ are pointed. Meanwhile we also obtain weakly group-theoretical property  of weakly integral braided fusion category $\C$, which satisfies integral simple objects have  FP-dimension $p^mq^n$ Proposition \ref{simplep^mq^nsolv}, where $p, q$ are primes, $m,n$ are non-negative integers.

The organization of this paper is  as follows. In section \ref{section2}, we recall some basic properties of fusion categories that we use throughout, and we prove  for any strictly weakly integral slightly degenerate fusion category $\C$ that $8\mid FPdim(\C)$ in Theorem \ref{slightstrictweakly}. In section \ref{section3}, we show for  super-modular fusion category $\C$ that  $\frac{dim(\C)}{2dim(X)^2}$ ($X\in\Q(\C)$) is an algebraic integer in Corollary \ref{sqdimension} and that the slightly degenerate fusion categories of FP-dimension $2p^nd$ and $4p^nd$ are nilpotent in Corollary \ref{slightly2p^nd4p^nd}, where $p$ is an odd prime and where $d$ is an odd square-free integer. In section \ref{section4}, we first classify slightly degenerate generalized Tambara-Yamagami fusion categories in Corollary \ref{generalizedTYstru}, and then we  classify   slightly degenerate fusion categories of FP-dimensions $2^n$ $(n\leq5)$ and $8d$ in Theorem \ref{slightly8d} and Theorem \ref{slightly16d}, where $d$ is a square-free integer.

\section{Preliminaries}\label{section2}
In this paper, we use $Vec$ and $sVec$ to denote the category of finite-dimensional vector spaces and super vector spaces  over $k$, respectively. We will freely use the basic theory of fusion categories and FP-dimensions but will recall some most used facts below, we refer the reader to \cite{DrGNO2,EGNO,ENO1,ENO2} for properties of fusion categories and braided fusion categories.

Let $G$ be a finite group, for a $G$-graded fusion category $\C=\oplus_{g\in G}\C_g$, we denote $FPdim(\C_g):=\sum\limits_{\substack{X\in\Q(\C_g)}}FPdim(X)^2$, $\forall g\in G$. In particular, if the grading is faithful, that is, $\C_g$ is non-zero, $\forall g\in G$, then $FPdim(\C)=|G|FPdim(\C_g)$ \cite[Proposition 8.20]{ENO1}. Let $\C_{pt}$ be the maximal pointed fusion subcategory of $\C$, $G(\C)$ be the group of isomorphism classes of invertible objects of $\C$. Let  $\C_{ad}$ be the  adjoint fusion subcategory of $\C$, i.e. $\C_{ad}$  is  generated by simple objects $Y$ such that $Y\subseteq X\otimes X^*$ for some simple object $X$, then $\C$ has a faithful grading with $\C_{ad}$ be the trivial component, the grading is called universal grading of $\C$, grading group is  denoted by $U_\C$ \cite [Corollary 3.7]{GN}.

A fusion category $\C$ is weakly integral if $FPdim(\C)\in\Z$; $\C$ is integral if all the simple objects are integral, that is $FPdim(X)\in\Z$, $\forall X\in\Q(\C)$; $\C$ is strictly weakly integral if there exists a non-integral simple object $X$, i.e. $FPdim(X)\notin\Z$.

For a weakly integral fusion category $\C$, the FP-dimension of simple objects are square-roots of integers, and if $\C$ is strictly weakly integral, then $\C_{ad}$ is integral by
\cite[Proposition 8.27]{ENO1}. The FP-dimension defines a faithful grading on $\C$, the grading group $E$ is an elementary abelian $2$-group \cite[Theorem 3.10]{GN}, this grading is called dimension grading. Let $\C_{int}$ be  the trivial component of the dimension grading of $\C$, which  is  the maximal integral fusion subcategory of $\C$, therefore for a weakly integral fusion category $\C$, we have $\C_{pt}\subseteq\C_{int}$ and $\C_{ad}\subseteq\C_{int}$.

Fusion category $\C$ is nilpotent if there exists  a natural number $n$ such that $\C^{(n)}=Vec$, where $\C^{(0)}:=\C$, $\C^{(1)}:=\C_{ad}$, $\C^{(j)}:=(\C^{(j-1)})_{ad}$, $j\geq1$ \cite{GN}. Pointed fusion categories and fusion categories of prime power FP-dimensions are nilpotent \cite [Theorem 8.28]{ENO1}, for example. Fusion category $\C$ is weakly group-theoretical if it is Morita equivalent to a nilpotent fusion category $\D$ \cite{ENO2}. Fusion category $\C$ is group-theoretical if it is Morita equivalent to a pointed  fusion category $\D$.

Fusion category $\C$ is a braided  fusion category if for any $X,Y,Z\in\C$, there exists a natural isomorphism $c_{X,Y}:X\otimes Y\to Y\otimes X$, and braiding $c$ satisfies $c_{X,I}=c_{I,X}=id_X$, $c_{X\otimes Y,Z}=c_{X,Z}\otimes id_Y \circ id_X\otimes c_{Y,Z}$, $c_{Z,X\otimes Y}=id_X\otimes c_{Z,Y}\circ c_{Z,X}\otimes id_Y$, here we suppress the associativity isomorphism. Braided fusion category $\C$ is a pre-modular  (or ribbon)  fusion category if $\C$ admits a spherical structure. It follows from \cite[Proposition 8.23, Proposition 8.24]{ENO1} that any  weakly integral braided fusion category $\C$ is pre-modular, and $dim(X)=FPdim(X)$, $\forall X\in\Q(\C)$, where $dim(X)$ is taking trace of morphism $id_X$ via spherical structure of $\C$. Throughout this paper, for any fusion category $\C$, we always assume $dim(X)=FPdim(X)$, $\forall X\in\Q(\C)$, unless otherwise stated.

For a pre-modular fusion category $\C$, let $\theta$ be the ribbon structure of $\C$.
Then for any simple object $X\in\C$, $\theta_X\in Hom_\C(X,X)\cong k$ will be regarded as a non-zero scalar. The $T$-matrix $T=(T_{X,Y})_{X,Y\in\Q(\C)}$ is the diagonal matrix with coefficients $T_{X,Y}:=\delta_{X,Y}\theta_X$;
the $S$-matrix $S=(s_{X,Y})_{X,Y\in\Q(\C)}$ is defined via the spherical structure of $\C$. Specifically,  $s_{X,Y}:=Tr(c_{Y,X} c_{X,Y})$. Pre-modular fusion category $\C$ is modular if $S$-matrix is non-degenerate.

For a braided fusion category $\C$, let $\D\subseteq\C$ be a fusion subcategory, the  centralizer  $\D'$ of $\D$ is the fusion subcategory generated by all simple objects $X$ such that $c_{Y,X} c_{X,Y}=id_{X\otimes Y}$, $\forall Y\in\D$. Braided fusion category $\C$ is non-degenerate if and only if $\C'=Vec$ as symmetric fusion category by \cite[Proposition 3.7]{DrGNO2}; moreover, modular fusion category $\C$ is  exactly non-degenerate with a spherical structure. $\C$ is slightly degenerate (or super modular), if M\"{u}ger center $\C'\cong sVec$ as symmetric fusion category. We use $\chi$ to denote the generator of $sVec$ throughout, which is called fermion in \cite{Mu}.

Fusion subcategory $\D\subseteq\C$ is symmetric if $\D\subseteq\D'$.
By \cite[Corollaire 0.8]{De} any symmetric fusion category is braided equivalent to $Rep(G,u)$, where $Rep(G,u)$ is the category of finite-dimensional super representations of finite group $G$, $u\in G$ is a central element of order $2$ and acts as parity automorphism. In general, we say that $Rep(G,u)$ is super-Tannakian. Symmetric fusion category $\C$ is Tannakian if $\C\cong Rep(G)$ as symmetric fusion category for a finite group $G$. So symmetric fusion categories of odd FP-dimensions are always Tannakian categories. Besides, symmetric fusion category $\C\cong Rep(G,u)$ contains a maximal Tannakian subcategory $\E\cong Rep(G/\langle u\rangle)$ with $FPdim(\E)=\frac{1}{2}FPdim(\C)$
\cite[Corollary 2.50]{DrGNO2}.

For braided fusion category $\C$, let $\E=Rep(G)\subseteq\C$ be a non-trivial Tannakian subcategory, then there exists a fusion category $\C_G$, which is called de-equivariantization of $\C$ by $\E$, and $\C\cong (\C_G)^G$ as  fusion categories. In general, $\C_G$ is  a braided $G$-crossed fusion category, which admits a $G$-grading, and the trivial component of the $G$-grading $\C_G^0\cong \E'_G$ is a braided fusion category. If particular, if $\E\subseteq \C'$, then $\C_G$ is a braided fusion category, and $(\C_G)'\cong (\C')_G$, see \cite[section 4]{DrGNO2} for details.

\begin{theo}[\cite{DrGNO2}, Theorem 3.14]\label{dimensionform}For any braided fusion category $\C$, let $\D\subseteq\C$ be a fusion subcategory, then $FPdim(\D)FPdim(\D')=FPdim(\C)FPdim(\D\cap\C')$.
\end{theo}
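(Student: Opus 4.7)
The plan is to reduce to the case where the Müger center $\C'$ is contained in $\D$, then handle that reduced case by splitting into the non-degenerate and slightly degenerate subcases and, in the remaining scenarios, bootstrapping via de-equivariantization by a maximal Tannakian subcategory of $\C'$.

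First I would replace $\D$ by the smallest fusion subcategory $\D\cdot\C'$ containing both. Since every simple object of $\C'$ centralizes all of $\C$ by definition, one has $(\C')'=\C$, and so $(\D\cdot\C')'=\D'\cap(\C')'=\D'\cap\C=\D'$. Using the standard identity $FPdim(\D\cdot\C')=FPdim(\D)FPdim(\C')/FPdim(\D\cap\C')$ together with $(\D\cdot\C')\cap\C'=\C'$, one sees that the desired identity for $\D\cdot\C'$ is algebraically equivalent to the desired identity for $\D$. Hence we may assume $\C'\subseteq\D$, and the assertion collapses to $FPdim(\D)FPdim(\D')=FPdim(\C)FPdim(\C')$.

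I would next dispose of the base cases. If $\C$ is non-degenerate, then $FPdim(\C')=1$ and this is Müger's classical centralizer theorem, provable from the orthogonality relations of the $S$-matrix together with the characterization $Y\in\D'$ iff $s_{X,Y}=d_Xd_Y$ for all simple $X\in\D$. If $\C$ is slightly degenerate, then $FPdim(\C')=2$ and the same $S$-matrix computation applies once one accounts for the rank-two kernel of $S$ coming from $sVec\subseteq\C'$; equivalently, one passes to the non-degenerate quotient by the fermion and tracks the factor of $2$. In the remaining case, where $\C'$ is Tannakian of rank greater than $1$ or properly super-Tannakian, I would pick a maximal Tannakian subcategory $\E=Rep(G)\subseteq\C'$ and de-equivariantize. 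Since $\C'\subseteq\D$ and $\C'\subseteq\D'$, both $\D$ and $\D'$ contain $\E$ and descend to fusion subcategories $\D_G,(\D')_G\subseteq\C_G$ with FP-dimensions divided by $|G|$. One then checks that de-equivariantization intertwines centralizers in this setting, so $(\D_G)'=(\D')_G$ inside the braided fusion category $\C_G$, which is either non-degenerate or slightly degenerate by construction. Applying the base case to $\C_G$ and multiplying through by $|G|^2$, together with $FPdim((\C_G)')=FPdim(\C')/|G|$, recovers the desired identity for $\C$.

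The main obstacle is the slightly degenerate base case and the compatibility of centralizers with de-equivariantization: one must show that an object of $\C$ whose $\E$-equivariant descent lies in $(\D_G)'$ actually centralizes all of $\D$, not just $\D$ modulo $\E$. This rests on the braided $G$-crossed structure on $\C_G$ and the fact that the fermion generating the $sVec$ part of $\C'$ remains central after de-equivariantization, so the bookkeeping for the extra factor of $2$ matches the $FPdim(\D\cap\C')$ on the right-hand side.
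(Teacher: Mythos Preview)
The paper does not supply its own proof of this statement; it is quoted verbatim as \cite[Theorem~3.14]{DrGNO2} and used as a black box throughout. So there is nothing in the present paper to compare your argument against. What I can do is assess whether your outline would stand on its own, and how it relates to the original proof in \cite{DrGNO2}.

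Your reduction step is sound: replacing $\D$ by $\D\vee\C'$ and using $FPdim(\D\vee\C')=FPdim(\D)FPdim(\C')/FPdim(\D\cap\C')$ together with $(\C')'=\C$ does reduce the claim to the case $\C'\subseteq\D$. The non-degenerate base case is M\"uger's theorem, which is fine (modulo the need for a spherical structure to run the $S$-matrix argument; the statement is about FP-dimensions, so strictly speaking you would have to pass through a sphericalization or invoke pseudo-unitarity, which is not automatic for an arbitrary braided fusion category).

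The genuine gap is your slightly degenerate ``base case''. You write that one can ``pass to the non-degenerate quotient by the fermion and track the factor of $2$'', but no such quotient exists: $sVec$ is not Tannakian, so there is no de-equivariantization by it, and the existence of a minimal non-degenerate \emph{extension} is a deep and separate theorem that goes well beyond what should be needed here. The alternative you gesture at---running the M\"uger argument with the degenerate $S$-matrix and its rank-two kernel---is not the same computation, since M\"uger's proof uses invertibility of $S$ in an essential way; making this precise would require developing the $\widehat{S}$-matrix machinery for arbitrary subcategories containing $sVec$, which you have not done. Likewise, the compatibility $(\D_G)'=(\D')_G$ that you flag as ``the main obstacle'' is exactly that: one inclusion is immediate from the fact that the free-module functor is braided, but the reverse inclusion requires a real argument about how simple summands of free modules interact with the braiding, and you have not supplied it.

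For comparison, the proof in \cite{DrGNO2} does not proceed by reduction to the non-degenerate or slightly degenerate case at all; it argues directly with the regular object and the characterization of centralizers, avoiding both the sphericity hypothesis and the inductive de-equivariantization entirely.
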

\begin{coro}\label{Tannakiandimen}For a slightly degenerate  fusion category $\C$, let $\E$ be a Tannakian subcategory of $\C$, then the ratio $\frac{FPdim(\C)}{FPdim(\E)^2}$ is an algebraic integer. In particular, if $\C$ is weakly integral, then $\frac{FPdim(\C)}{FPdim(\E)^2}$ is an integer.
\end{coro}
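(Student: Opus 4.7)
The plan is to apply the dimension formula (Theorem \ref{dimensionform}) to $\D=\E$, and then exploit the fact that $\E$, being Tannakian, cannot intersect the M\"uger center in anything but the trivial category. First I would observe that $\E \cap \C'$ is a symmetric fusion subcategory of both $\E \cong Rep(G)$ and $\C' \cong sVec$. Since the only nontrivial object in $sVec$ is the fermion $\chi$, and since a Tannakian category contains no fermion (its braiding on any simple object $X$ satisfies $c_{X,X}\circ c_{X,X}=\mathrm{id}$ with trivial twist, ruling out the super-vector line), I can conclude $\E\cap\C'\cong Vec$, so $FPdim(\E\cap\C')=1$.

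Next I would plug this into Theorem \ref{dimensionform}, obtaining $FPdim(\E)\cdot FPdim(\E')=FPdim(\C)$, equivalently
\[
\frac{FPdim(\C)}{FPdim(\E)^2}=\frac{FPdim(\E')}{FPdim(\E)}.
\]
Since $\E$ is symmetric, $\E\subseteq\E'$ inside $\C$, so this last quotient is the FP-dimension ratio of a fusion category over its fusion subcategory. By the standard fact (e.g.\ \cite[Proposition 8.15]{ENO1}) that $FPdim(\mathcal{B})/FPdim(\mathcal{A})$ is an algebraic integer whenever $\mathcal{A}\subseteq\mathcal{B}$ is a fusion subcategory, the ratio $FPdim(\E')/FPdim(\E)$ is an algebraic integer, proving the first claim.

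For the second assertion, if $\C$ is weakly integral then $FPdim(\C)\in\Z$ and $FPdim(\E)\in\Z$, so $FPdim(\C)/FPdim(\E)^2$ is a rational number; being simultaneously an algebraic integer and rational, it must lie in $\Z$.

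The argument is almost entirely a bookkeeping exercise once Theorem \ref{dimensionform} is in hand; the only point that requires care is the identification $\E\cap\C'=Vec$, and in particular the assertion that a Tannakian category contains no fermion. If a cleaner justification is wanted, one can argue instead that $\E\cap\C'$ is a Tannakian subcategory of $\C'\cong sVec$, and the only Tannakian subcategory of $sVec$ is $Vec$ since $sVec$ itself is not Tannakian.
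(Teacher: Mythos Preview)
Your proof is correct and follows essentially the same route as the paper's: both use $\E\cap\C'=Vec$ together with Theorem~\ref{dimensionform} to rewrite $\frac{FPdim(\C)}{FPdim(\E)^2}$ as $\frac{FPdim(\E')}{FPdim(\E)}$, and then invoke \cite[Proposition 8.15]{ENO1}. You supply more detail on the intersection $\E\cap\C'$ and on the weakly integral case than the paper does, but the argument is the same.
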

\begin{proof}Since $\C'\cong sVec$ and $\E$ is Tannakian,  $\E\cap\C'=Vec$. By definition $\E\subseteq\E'$, then by \cite[Proposition 8.15]{ENO1} $m=\frac{FPdim(\E')}{FPdim(\E)}$ is an algebraic integer. Therefore $m=\frac{FPdim(\C)}{FPdim(\E)^2}$ is an algebraic integer by Theorem \ref{dimensionform}.
\end{proof}
The following lemma is well-known and will be useful throughout this paper.
\begin{lemm}[\cite{Mu}, Lemma 5.4]\label{slightMugercenter}For a slightly degenerate fusion category $\C$, let $\chi$ be the generator of $sVec=\C'$, then $\chi\otimes X\ncong X$, $\forall X\in\Q(\C)$.
\end{lemm}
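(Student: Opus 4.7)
The plan is to argue by contradiction using the ribbon twist. Since $\C$ is slightly degenerate, it is in particular pre-modular, so $\C$ carries a ribbon structure $\theta$ with $\theta_\chi = -1$ (the defining property of the fermion generating $sVec$). The key formula to exploit is the ribbon identity
\begin{equation*}
\theta_{\chi \otimes X} \;=\; (\theta_\chi \otimes \theta_X)\circ c_{X,\chi}\circ c_{\chi,X}.
\end{equation*}

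Suppose, for contradiction, that $\chi \otimes X \cong X$ as objects of $\C$ for some simple $X \in \Q(\C)$. Then because $\theta$ is a natural isomorphism and $X$ is simple (so $\theta_X$ is a scalar on $X$), the two sides above must be equal scalars on $X$. That is, $\theta_{\chi\otimes X} = \theta_X$. On the other hand, since $\chi$ lies in the Müger center $\C'$, the double braiding centralizes everything, so $c_{X,\chi}\circ c_{\chi,X} = \mathrm{id}_{\chi\otimes X}$. Substituting this and $\theta_\chi = -1$ into the ribbon identity, the right-hand side collapses to $-\theta_X$. Combining the two expressions gives $\theta_X = -\theta_X$, i.e. $\theta_X = 0$, which contradicts the fact that the ribbon twist on a simple object is a nonzero scalar.

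The argument is essentially a one-line calculation once the pieces are in place; there is no real obstacle, only the minor bookkeeping of (i) recording that the fermion has twist $-1$, which is the content of $\C' \cong sVec$ rather than $\C' \cong \mathrm{Rep}(\Z_2)$, and (ii) justifying the identification $\theta_{\chi\otimes X} = \theta_X$ from $\chi\otimes X \cong X$, which uses naturality of $\theta$ and the fact that $\mathrm{Hom}_\C(X,X) = k$ for simple $X$. If one wanted to dispense with the ribbon structure, an alternative would be to use the $S$-matrix relation $s_{\chi,X} = \theta_\chi^{-1}\theta_X^{-1}\,\mathrm{tr}(c_{X,\chi}c_{\chi,X})\dim(X)$ specialized via $\chi \in \C'$; but the direct twist computation above is the cleanest route.
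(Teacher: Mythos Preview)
Your argument is correct and is exactly the standard proof (Müger's original). The paper does not supply its own proof of this lemma---it is stated as a citation to \cite{Mu}---but immediately afterward the paper invokes the same balancing equation to record that $\theta_{\chi\otimes X}=-\theta_X$, which is precisely the computation you carry out.
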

\begin{prop}[\cite{DrGNO2}, Proposition 3.29]\label{admugercenter}For a slightly degenerate fusion category $\C$, we have $(\C_{ad})'=\C_{pt}$, and $(\C_{pt})'=(\C_{ad})''=\C_{ad}\vee\C'$.
\end{prop}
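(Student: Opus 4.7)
The plan is to apply the centralizer dimension formula (Theorem~\ref{dimensionform}) twice, once to $\C_{ad}$ and once to $\C_{pt}$, with the intersections against $\C'=sVec$ handled by Lemma~\ref{slightMugercenter}.

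First I would check $\C_{ad}\cap\C'=Vec$. Since the only non-trivial simple of $sVec$ is $\chi$, it suffices to show $\chi\notin\C_{ad}$. But $\chi\in\C_{ad}$ would force $\chi$ to be a summand of some $X\otimes X^*$ with $X$ simple, i.e.\ $Hom(\chi\otimes X,X)\neq 0$, which contradicts Lemma~\ref{slightMugercenter}. Conversely, $\chi\in\C_{pt}$ is automatic because $\chi$ is invertible, so $\C'\subseteq\C_{pt}$ and $\C_{pt}\cap\C'=\C'$.

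Next I would establish $\C_{pt}\subseteq(\C_{ad})'$ by hand. For any invertible $g$ and any simple $X$, we have $c_{X,g}c_{g,X}=\lambda\cdot id$ and $c_{X^*,g}c_{g,X^*}=\lambda^{-1}\cdot id$ for a single scalar $\lambda\in k^*$, and the hexagon axiom then gives $c_{X\otimes X^*,g}c_{g,X\otimes X^*}=id$. As $\C_{ad}$ is generated by the objects $X\otimes X^*$, this shows $g\in(\C_{ad})'$. Applying Theorem~\ref{dimensionform} to $\C_{ad}$ together with $\C_{ad}\cap\C'=Vec$ then yields
\[ FPdim\bigl((\C_{ad})'\bigr)=\frac{FPdim(\C)}{FPdim(\C_{ad})}=|U_\C|, \]
so $|G(\C)|=FPdim(\C_{pt})\le FPdim((\C_{ad})')=|U_\C|$. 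Promoting this inclusion to an equality is the main obstacle. For a non-degenerate braided fusion category, this is the Gelaki--Nikshych isomorphism $G(\C)\cong\widehat{U_\C}$ produced by the monodromy pairing $G(\C)\times U_\C\to k^*$. In the slightly degenerate case, the same pairing has left kernel $G(\C)\cap\C'=\{1,\chi\}$ of order $2$, while the distinguished universal-grading component containing $\chi$ (well defined since $\chi\notin\C_{ad}$) supplies a non-trivial element of order $2$ in the right kernel; balancing these two contributions in the induced perfect pairing yields $|G(\C)|=|U_\C|$, whence $(\C_{ad})'=\C_{pt}$.

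Finally, applying Theorem~\ref{dimensionform} to $\C_{pt}$ with $FPdim(\C_{pt}\cap\C')=2$ gives
\[ FPdim\bigl((\C_{pt})'\bigr)=\frac{2\,FPdim(\C)}{FPdim(\C_{pt})}=2\,FPdim(\C_{ad}). \]
Because $\chi\otimes X\ncong X$, the simples of $\C_{ad}\vee\C'$ form the disjoint union $\Q(\C_{ad})\sqcup\{\chi\otimes X:X\in\Q(\C_{ad})\}$, so $FPdim(\C_{ad}\vee\C')=2\,FPdim(\C_{ad})$. On the other hand, $\C_{ad}\subseteq(\C_{ad})''=(\C_{pt})'$ by the first part, while $\C'\subseteq(\C_{pt})'$ is automatic since $\C'$ centralizes every fusion subcategory of $\C$. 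Hence $\C_{ad}\vee\C'\subseteq(\C_{pt})'$, and equality of FP-dimensions forces $(\C_{pt})'=(\C_{ad})''=\C_{ad}\vee\C'$.
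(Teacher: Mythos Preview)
Your argument has a genuine gap at the very first step. You assert that $\chi\in\C_{ad}$ would force $\chi$ to be a summand of some $X\otimes X^*$; but $\C_{ad}$ is only the fusion subcategory \emph{generated} by the simple summands of the $X\otimes X^*$, and a simple object of that subcategory need not itself be such a summand. Lemma~\ref{slightMugercenter} indeed shows that $\chi$ never occurs in any $X\otimes X^*$ (equivalently $\sum_X[\chi\otimes X:X]=0$), but this does \emph{not} exclude $\chi$ from $\C_{ad}$. In fact both possibilities $\C'\cap\C_{ad}=Vec$ and $\C'\subseteq\C_{ad}$ occur, and the paper explicitly distinguishes them throughout (Lemma~\ref{C'subseteqC_ad}, Proposition~\ref{integralslightly2^nd}, Theorem~\ref{2p^n4p^n}, Theorem~\ref{slightly8d}, \ldots).

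This error propagates through the rest of your argument. When $\C'\subseteq\C_{ad}$, Theorem~\ref{dimensionform} yields $FPdim((\C_{ad})')=2|U_\C|$ rather than $|U_\C|$, so the inequality you derive and the pairing sketch (which explicitly invokes ``the universal-grading component containing $\chi$, well defined since $\chi\notin\C_{ad}$'') both break down. In your last paragraph, the disjoint-union description of $\Q(\C_{ad}\vee\C')$ likewise fails: if $\chi\in\C_{ad}$ then $\C_{ad}\vee\C'=\C_{ad}$ and its FP-dimension is $FPdim(\C_{ad})$, not $2\,FPdim(\C_{ad})$. (Note that the paper does not supply a proof here; it simply quotes \cite{DrGNO2}. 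Once $(\C_{ad})'=\C_{pt}$ is known, the second assertion follows directly from the double-centralizer identity $\D''=\D\vee\C'$ of Remark~\ref{Tannmugdeequiv}, with no dimension count needed.)
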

\begin{remk}\label{Tannmugdeequiv} Indeed, for any braided fusion category $\C$, let $\D\subseteq\C$ be a fusion subcategory, then $\D''=\D\vee\C'$ \cite[Corollary $3.11$]{DrGNO2}, where  $\D\vee\C'$ is  the fusion subcategory generated by $\D$ and $\C'$. In particular, if $\C$ is slightly degenerate, and  $\E=Rep(G)\subseteq\C$ is  a Tannakian subcategory, as $\E''=\E\vee\C'$, by Theorem \ref{dimensionform} $FPdim(\E'')=2FPdim(\E)$. Therefore  \cite[Proposition 4.30 (iii)]{DrGNO2} implies $(\E'_G)'\cong(\E'\cap\E'')_G=(\E\vee\C')_G\cong sVec$, that is, $\E'_G$ is also slightly degenerate.
\end{remk}

\begin{prop}[\cite{DongNa}, Corollary 3.2]\label{striweakintchar}Let $\C$ be weakly integral braided fusion category, if $4\nmid FPdim(\C)$, then $\C$ is integral.
\end{prop}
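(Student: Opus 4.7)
The plan is to prove the contrapositive: if $\C$ is strictly weakly integral, then $4\mid FPdim(\C)$. The argument begins with the dimension grading recalled in Section~\ref{section2}: strict weak integrality together with \cite[Theorem 3.10]{GN} yields a faithful grading $\C=\bigoplus_{e\in E}\C_e$ with $E\cong(\Z/2)^k$ for some $k\geq 1$ and $\C_0=\C_{int}$, so $FPdim(\C)=2^k\,FPdim(\C_{int})$. If $k\geq 2$ the conclusion is immediate, so the remaining case is $k=1$, in which $\C=\C_0\oplus\C_1$ and every $X\in\mathcal{O}(\C_1)$ satisfies $FPdim(X)=m_X\sqrt{k_0}$ for a single square-free integer $k_0\geq 2$; consequently $FPdim(\C)=2k_0\sum_{X}m_X^2$.

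To handle $k=1$ I exploit that for a braided $\C$ the universal grading group $U_\C$ is canonically Pontryagin-dual to the group of invertible objects of the M\"uger center $\C'$, so the faithful $\Z/2$ dimension grading factors through $U_\C$ and corresponds to an order-$2$ invertible object $\delta\in\C'$. Such a $\delta$ is either Tannakian (so $\langle\delta\rangle\cong Rep(\Z/2)\subseteq\C'$) or a fermion (so $\langle\delta\rangle\cong sVec\subseteq\C'$). If $\delta$ is Tannakian, the de-equivariantization $\C_{\Z/2}$ is a braided fusion category of FP-dimension $FPdim(\C)/2$; because the non-integral simples of $\C$ have irrational FP-dimension $m_X\sqrt{k_0}$, none of their images in $\C_{\Z/2}$ can be integral, so $\C_{\Z/2}$ remains strictly weakly integral, and induction on FP-dimension gives $4\mid FPdim(\C_{\Z/2})$, whence $8\mid FPdim(\C)$. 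If $\delta$ is the fermion and $\C'=sVec$, then $\C$ is slightly degenerate and Lemma~\ref{slightMugercenter} makes $X\mapsto\delta\otimes X$ a fixed-point-free, FP-dimension-preserving involution on $\mathcal{O}(\C_1)$; hence $|\{X\in\mathcal{O}(\C_1):m_X=m\}|$ is even for every $m$, so $\sum m_X^2$ is even and $4\mid FPdim(\C)$. The remaining subcase $\C'\supsetneq sVec$ reduces to the Tannakian case by first de-equivariantizing by the maximal Tannakian subcategory $Rep(G/\langle u\rangle)\subseteq\C'\cong Rep(G,u)$ recalled in Section~\ref{section2}.

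The main obstacle is the Tannakian subcase: one must verify that de-equivariantization by the order-$2$ Tannakian subcategory preserves strict weak integrality (so that the inductive hypothesis applies) and that the FP-dimension strictly drops, with an evident base case such as $FPdim(\C)\leq 3$ (where any weakly integral braided category is automatically integral by the same dimension-grading argument). The identification of $\delta\in\C'$ from the $\Z/2$ dimension grading via $U_\C\cong\widehat{\mathrm{Inv}(\C')}$ is a standard fact from braided fusion category theory and should be cited or briefly justified at the point of use.
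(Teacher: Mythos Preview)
The paper does not supply a proof of this proposition; it is cited from \cite[Corollary~3.2]{DongNa} without argument, so there is nothing in-paper to compare against and I assess your proposal on its own terms.

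There is a genuine gap: the claim that $U_\C$ is Pontryagin-dual to $G(\C')$ for a braided fusion category $\C$ is false. Any braided Ising category $\I$ already refutes it, since $\I$ is modular (so $\I'=Vec$ and $G(\I')=1$) yet $U_\I\cong\Z_2$. The relation the braiding actually furnishes runs the other way: double braiding with invertibles defines a homomorphism $G(\C)\to\widehat{U_\C}$ with kernel $G(\C')$, i.e.\ an injection $G(\C)/G(\C')\hookrightarrow\widehat{U_\C}$, and this does \emph{not} manufacture an element of $\C'$ out of a grading. Concretely, $\C=\mathcal{TY}(\Z_3,\tau,\mu)^{\Z_2}$ is modular of FP-dimension $12$ with simple objects of dimension $\sqrt{3}$; you are then in the case $k=1$ with $k_0=3$ odd, yet there is no order-$2$ object $\delta\in\C'$ at all, so your Tannakian/fermion dichotomy cannot begin. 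The fix is to replace that dichotomy by a trichotomy on $\C'$: your de-equivariantization-plus-induction handles the case where $\C'$ contains a nontrivial Tannakian subcategory, and your pairing argument via Lemma~\ref{slightMugercenter} correctly handles $\C'\cong sVec$; the case you are missing is $\C'=Vec$, where one should instead use that for modular $\C$ the map $G(\C)\to\widehat{U_\C}$ is an isomorphism \cite{GN}, so that $2=|E|$ divides $|U_\C|=FPdim(\C_{pt})$, which in turn divides $FPdim(\C_{int})$, giving $4\mid FPdim(\C)$.
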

For weakly integral slightly degenerate fusion categories, we can improve  Proposition \ref{striweakintchar} a little further. We begin with a special case.
\begin{lemm}\label{C'subseteqC_ad}Let $\C$ be a strictly  weakly integral slightly degenerate fusion category, if $\C'\subseteq \C_{ad}$, then $8|FPdim(\C)$.
\end{lemm}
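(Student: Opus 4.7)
The plan is to trace how the hypothesis $\C'\subseteq\C_{ad}$ forces the integral part $\C_{int}$ to contain enough invertibles to produce the factor of $8$. Since $\C$ is strictly weakly integral, the dimension grading group $E$ is a non-trivial elementary abelian $2$-group with $\C_e=\C_{int}$, so $FPdim(\C)=|E|\cdot FPdim(\C_{int})$ and $|E|\geq 2$ is a power of $2$. The fermion $\chi$ generating $\C'$ is invertible, hence lies in $\C_{pt}\subseteq\C_{int}$, so together with the hypothesis we have $\C'\subseteq\C_{ad}\subseteq\C_{int}$ and in particular $\C_{int}\cap\C'=\C'$.

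The key computation I would run next is the dimension formula (Theorem \ref{dimensionform}) applied to the subcategory $\C_{int}\subseteq\C$:
\begin{equation*}
FPdim(\C_{int})\cdot FPdim((\C_{int})')=FPdim(\C)\cdot FPdim(\C_{int}\cap\C')=2\,FPdim(\C).
\end{equation*}
Substituting $FPdim(\C)=|E|\cdot FPdim(\C_{int})$ gives $FPdim((\C_{int})')=2|E|$. Because $\C_{ad}\subseteq\C_{int}$, taking centralizers reverses the inclusion, so $(\C_{int})'\subseteq(\C_{ad})'$, and Proposition \ref{admugercenter} identifies the right-hand side as $\C_{pt}$. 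Thus $(\C_{int})'\subseteq\C_{pt}$, and \cite[Proposition 8.15]{ENO1} gives that $FPdim((\C_{int})')=2|E|$ divides $FPdim(\C_{pt})=|G(\C)|$.

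Finally, since $\C_{pt}\subseteq\C_{int}$ as well, the same divisibility tool yields $|G(\C)|\mid FPdim(\C_{int})$, so $2|E|\mid FPdim(\C_{int})$. Therefore
\begin{equation*}
FPdim(\C)=|E|\cdot FPdim(\C_{int})\ \text{is divisible by}\ 2|E|^{2}\geq 8,
\end{equation*}
which is exactly the conclusion $8\mid FPdim(\C)$. The argument is really a bookkeeping argument once the two uses of the dimension formula and Proposition \ref{admugercenter} are combined; the only mild obstacle is verifying that $\C'$ sits inside $\C_{int}$ (needed to make $\C_{int}\cap\C'=\C'$), but this is immediate from the invertibility of $\chi$ together with $\C_{pt}\subseteq\C_{int}$.
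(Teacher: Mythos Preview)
Your proof is correct and in fact proves more than the lemma claims. The paper applies Theorem \ref{dimensionform} to $\C_{ad}$: the hypothesis $\C'\subseteq\C_{ad}$ is used precisely to force $\C_{ad}\cap\C'=\C'$, whence $FPdim(\C_{pt})=2|U_\C|$; combining this with the surjection $U_\C\twoheadrightarrow E$ and the chain $\C_{pt}\subseteq\C_{int}$ then gives $2|E|^2\mid FPdim(\C)$. You instead apply the dimension formula to $\C_{int}$, and since $\C'\subseteq\C_{pt}\subseteq\C_{int}$ holds automatically for any slightly degenerate $\C$, the hypothesis $\C'\subseteq\C_{ad}$ is never actually invoked in your argument: the chain $2|E|=FPdim((\C_{int})')\mid FPdim(\C_{pt})\mid FPdim(\C_{int})$ goes through for \emph{every} strictly weakly integral slightly degenerate $\C$. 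In other words, your argument already establishes the slightly degenerate case of Theorem \ref{slightstrictweakly} directly, bypassing both Lemma \ref{C'subseteqC_ad} as a separate step and the reduction-to-the-core machinery the paper deploys in the proof of that theorem. The paper's decomposition has the virtue of making explicit where each assumption enters, but your route is shorter and strictly stronger; you may want to note explicitly that the hypothesis is unused.
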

\begin{proof}Assume $FPdim(\C)=2^md$, with $d$ is an odd integer, and $|E|=2^r$, $r\geq1$, as $E$ is an elementary abelian $2$-group \cite[Theorem 3.10]{GN}.  Under the assumption $\C'\subseteq \C_{ad}$,  it follows from Theorem \ref{dimensionform} and Proposition \ref{admugercenter} that \begin{align*}
FPdim(\C_{pt})=\frac{2FPdim(\C)}{FPdim(\C_{ad})}=2|U_\C|.
 \end{align*}
 Universal property of  $U_\C$ implies that there is a  surjective group homomorphism $U_\C\to E$ by \cite[Corollary 3.7]{GN}, then $|U_\C|=|E|a=2^ra$, $a\in\Z$. As $\C_{pt}\subseteq\C_{int}$, we have  $FPdim(\C_{int})=FPdim(\C_{pt})b=2^{r+1}ab$ by \cite[Proposition 8.15]{ENO1}, $b\in\Z$. Meanwhile faithfulness of the dimension grading  means $FPdim(\C)=|E|FPdim(\C_{int})=2^{2r+1}ab$. Then $2^{m-(2r+1)}d=ab\in\Z$, which implies $m\geq2r+1\geq3$.
\end{proof}

Recall that a symmetric fusion category $\C$ is super-Tannakian, if  there exists a finite group $G$ and a central element $u\in G$ of order $2$, such that $\C\cong Rep(G,u)$ as a symmetric category, where $u$ acts as parity automorphism.
\begin{theo}\label{slightstrictweakly}Let $\C$ be a strictly  weakly integral braided fusion category such that $\C'$ is super-Tannakian,  then $8| FPdim(\C)$.
\end{theo}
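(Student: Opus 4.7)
The plan is to reduce to the slightly degenerate case via de-equivariantization by the maximal Tannakian subcategory of $\C'$, and then to adapt the proof of Lemma \ref{C'subseteqC_ad} so as to cover the case in which the fermion need not lie in the adjoint subcategory. Writing $\C' \cong Rep(G,u)$ with $u \in G$ central of order $2$, I would set $H := G/\langle u\rangle$ and take $\E := Rep(H) \subseteq \C'$ as the maximal Tannakian subcategory; the de-equivariantization $\D := \C_H$ then has $FPdim(\D) = FPdim(\C)/|H|$ and, by Remark \ref{Tannmugdeequiv}, is slightly degenerate. The category $\D$ remains strictly weakly integral, because the forgetful functor $\C = \D^H \to \D$ preserves FP-dimensions and a non-integer-dimensional simple of $\C$ must therefore decompose into simples of $\D$ whose FP-dimensions sum to a non-integer, forcing some summand to be non-integer-dimensional. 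Since $FPdim(\C) = |H| \cdot FPdim(\D)$, it suffices to prove $8 \mid FPdim(\D)$.

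To handle $\D$, let $\chi \in \D' = sVec$ denote the fermion and $E$ the dimension grading group of $\D$, with $|E| = 2^r$ and $r \geq 1$. If $\chi \in \D_{ad}$, I apply Lemma \ref{C'subseteqC_ad} to $\D$ and immediately conclude $8 \mid FPdim(\D)$. If $\chi \notin \D_{ad}$, then by Proposition \ref{admugercenter} and Remark \ref{Tannmugdeequiv} the subcategory $(\D_{pt})' = \D_{ad} \vee \D'$ strictly contains $\D_{ad}$ with $FPdim((\D_{pt})') = 2\,FPdim(\D_{ad})$, and combining Theorem \ref{dimensionform} with $\D_{pt} \cap \D' = \D'$ (since $sVec$ is pointed) I obtain $FPdim(\D_{pt}) = FPdim(\D)/FPdim(\D_{ad}) = |U_\D|$. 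Now $FPdim(\chi) = 1$ places $\chi$ in the trivial component of the dimension grading whereas $\chi \notin \D_{ad}$ places it in a non-trivial component of the universal grading, so $\chi$ is a non-trivial order-$2$ element in the kernel of the surjection $U_\D \twoheadrightarrow E$; hence this kernel has even order, giving $|U_\D| = 2^{r+1} a'$ for some positive integer $a'$. Together with $\D_{pt} \subseteq \D_{int}$ and the identity $FPdim(\D) = |E| \cdot FPdim(\D_{int})$ coming from faithfulness of the dimension grading, this forces $2^{2r+1} \mid FPdim(\D)$, i.e.\ $8 \mid FPdim(\D)$.

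The hardest step will be the case $\chi \notin \D_{ad}$, in which Lemma \ref{C'subseteqC_ad} is inapplicable; the key observation is that the fermion itself contributes a non-trivial order-$2$ element in the kernel of $U_\D \twoheadrightarrow E$, providing the extra factor of $2$ that compensates for the change from $FPdim(\D_{pt}) = 2|U_\D|$ (in the Lemma's setting) to $FPdim(\D_{pt}) = |U_\D|$ here.
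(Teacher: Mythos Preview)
Your proof is correct and takes a genuinely different route from the paper's. Both arguments begin with the same reduction to the slightly degenerate case via de-equivariantization by the maximal Tannakian subcategory of $\C'$. From that point, however, the approaches diverge. The paper argues by contradiction: assuming $FPdim(\C)=2^2 d$ with $d$ odd, it reduces to $FPdim(\C_{pt})=2$, analyzes $\C_{ad}\cap\C_{pt}$, and when a non-trivial Tannakian subcategory is present it passes to the core $\E'_G$, invoking weak anisotropy (\cite[Corollaries 5.19, 5.29]{DrGNO2}) to force integrality. Your argument instead extends the counting of Lemma~\ref{C'subseteqC_ad} directly: when $\chi\in\D_{ad}$ the lemma applies verbatim, and when $\chi\notin\D_{ad}$ you observe that the universal-grading class of $\chi$ is a non-trivial order-$2$ element of $\ker(U_\D\twoheadrightarrow E)$, which supplies the missing factor of $2$ in $|U_\D|$ and yields $2^{2r+1}\mid FPdim(\D)$ by the same chain $\D_{pt}\subseteq\D_{int}$ and $FPdim(\D)=|E|\,FPdim(\D_{int})$.

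Your route is more elementary: it uses only Theorem~\ref{dimensionform}, Proposition~\ref{admugercenter}, and the universal grading, avoiding the core construction and the anisotropy results entirely. The paper's argument, on the other hand, extracts more structural information along the way (e.g.\ that the core is integral), which feeds into later results in Section~\ref{section3}. One small point worth making explicit in your write-up is why $FPdim(\D_{ad}\vee\D')=2\,FPdim(\D_{ad})$ when $\chi\notin\D_{ad}$: this holds because $\chi\otimes X\notin\D_{ad}$ for every simple $X\in\D_{ad}$ (otherwise $\chi=(\chi\otimes X)\otimes X^*\in\D_{ad}$), so $\D_{ad}\vee\D'$ is a faithful $\Z_2$-extension of $\D_{ad}$.
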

\begin{proof}It suffices to prove the theorem for $\C'=sVec$. Indeed, if $\C'$ is a super-Tannakian fusion category, then $\C'$ contains a maximal Tannakian subcategory $E=Rep(G)$
\cite[Corollary 2.50]{DrGNO2}, by Remark \ref{Tannmugdeequiv} $\C_G$ is a slightly degenerate strictly weakly integral fusion category, and $FPdim(\C)=FPdim(\C_G)|G|$. So if $8| FPdim(\C_G)$, then  $8|FPdim(\C)$.

Let $\C$ be a weakly integral  slightly degenerate fusion category with $FPdim(\C)=2^md$, by Proposition \ref{striweakintchar} we only need to show that if $m=2$  then $\C$ is integral. Note that if $d=1$, this is trivial, below we assume $d>1$. On the contrary, assume $\C$ is strictly weakly integral, then dimension grading group $E\cong\Z_2$ or $\Z_2\times\Z_2$ as $|E||FPdim(\C)$. While $sVec=\C'\subseteq \C_{int}$, therefore $E\cong\Z_2$ and $FPdim(\C_{int})=2d$. Since $\C'\subseteq\C_{pt}\subseteq\C_{int}$, we can assume $FPdim(\C_{pt})=2t$ with $t|d$. Let $p$ be an odd prime, if $p|FPdim(\C_{pt})$, then $\C_{pt}$ contains a braided fusion category $\A$ of FP-dimension $p$, from $\A\cap\C'=Vec$ we see that $\A$ is either a modular fusion subcategory or a Tannakian subcategory.

If $\C_{pt}$ does not contain any non-trivial Tannakian subcategory, then $\A$ is modular and \cite[Theorem 3.13]{DrGNO2} implies $\C\cong\A\boxtimes \A'$ with $\A'$ is slightly degenerate strictly weakly integral, so we can replace $\C$ by $\A'$. Therefore, we further assume $FPdim(\C_{pt})=2$, that is, $\C_{pt}=\C'=sVec$. Then $\C_{ad}\cap\C_{pt}=Vec$, otherwise $\C$ is integral by Lemma \ref{C'subseteqC_ad}. However that means $\C_{ad}$ is a non-trivial non-degenerate fusion category by Proposition \ref{admugercenter}, and $\C\cong\C_{ad}\boxtimes sVec$ by \cite[Theorem 3.13]{DrGNO2}, which implies $\C$ is integral, contradiction. Hence $\C_{pt}$ (then $\C$) must contains a non-trivial Tannakian subcategory.

Let $\E\cong Rep(G)$ be a maximal Tannakian subcategory of $\C$, then the core $\D:=\C_G^0\cong \E'_G$ of $\C$ is a slightly degenerate fusion category by Remark \ref{Tannmugdeequiv}, hence $2\nmid FPdim(\E)$, otherwise $2\nmid FPdim(\D)$ and then $sVec\nsubseteq\D$. Therefore $4\mid FPdim(\E')$ by Theorem \ref{dimensionform}. Note $\D$ is weakly anisotropic by \cite[Corollary 5.19]{DrGNO2}, then  \cite[Corollary 5.29]{DrGNO2} implies $\D_{pt}\cap\D_{ad}\subseteq\D_{pt}\cap(\D_{pt})'=\D'\cong sVec$ as $(\D_{pt})'=\D_{ad}\vee\D'$. If $\D'\subseteq\D_{ad}$, by Lemma \ref{C'subseteqC_ad}, $\D$ is integral; if $\D_{pt}\cap\D_{ad}=Vec$, then $\D_{ad}=Vec$ or $\D_{ad}$ is a non-trivial modular category, so $\D$ is pointed or $\D\cong\D_{ad}\boxtimes sVec$ by \cite[Theorem 3.13]{DrGNO2}. From both cases we obtain that $\D$ is an integral fusion category. However, integrality of $\D$ means that $\E'\cong\D^G$ is integral by \cite[Corollary 4.27]{DrGNO2}, and  then $4|FPdim(\C_{int})$ for $\E'\subseteq\C_{int}$, this is impossible. Hence $\C$ is integral.
\end{proof}
\begin{remk}The condition that  $\C'$ is super-Tannakian  can not be dropped, for example,  let $\C=\I$ be a braided Ising category, then $\C$ is strictly weakly integral and modular
\cite[Corollary B.12]{DrGNO2}, but $FPdim(\C)=4$. And there exists slightly degenerate  strictly weakly integral fusion category of FP-dimension $8$, that is $\I\boxtimes sVec$.
\end{remk}

\section{Nilpotency of a class of slightly degenerate fusion categories}\label{section3}
In this section, we first show for a super-modular fusion category $\C$ that $\frac{dim(\C)}{2dim(X)^2}$ is an algebraic integer for any $X\in\Q(\C)$, and then we prove nilpotency of slightly degenerate fusion categories of FP-dimensions $2p^nd$ and $4p^nd$, where $p$ is an odd prime, $d$ is an odd square-free integer, and $(p,d)=1$.

As we require that $dim(X)=FPdim(X)$ for $X\in\Q(\C)$, then for $sVec$, as a symmetric fusion category, $\theta_\chi=-1$, the  $S$-matrix of $sVec$ is $S=\left(
\begin{array}{cc}
  1 & 1 \\
 1 & 1 \\
\end{array}
\right)$.

By Lemma \ref{slightMugercenter}, for a slightly degenerate fusion category $\C$, rank of fusion subcategory $\A$ such that $\C'\subseteq\A$ is even. There exists a unnatural partition of $\Q(\C)$, which divides  $\Q(\C)$ into two subsets $\Pi_0\cup\Pi_1$ with same cardinal: unit object $I\in\Pi_0$; if $X\in\Pi_0$ then $X^*\in\Pi_0$, and there is a unique $Y\in\Pi_1$ s.t. $Y\cong \chi\otimes X$ \cite{BGHNPRW}. As $\C'=\langle\chi\rangle$, the balancing equation [e.g. \cite{EGNO}, Proposition 8.13.8] then implies $\theta_X=-\theta_Y$, if $Y\cong \chi\otimes X$.

For slightly degenerate pointed fusion categories, their structures are well-known.
\begin{prop}[\cite{DrGNO2,ENO2}]\label{slighpointed}Let $\C$ be slightly degenerate pointed fusion category, then $\C\cong\D\boxtimes sVec$, where $\D$ is a pointed non-degenerate fusion category.
\end{prop}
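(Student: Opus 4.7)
The plan is to pass to the pre-metric group description of pointed braided fusion categories. The isomorphism classes of simple objects of $\C$ form a finite abelian group $G := G(\C)$, and $\C$ is determined up to braided equivalence by $G$ together with the quadratic form $q(x) := \theta_x$ arising from the ribbon twist; its associated bimultiplicative form $b(x,y) := c_{y,x}\circ c_{x,y}$ satisfies $q(x+y) = q(x)q(y)b(x,y)$. The M\"{u}ger centre $\C'$ corresponds to $\mathrm{rad}(b)$, so $\C'\cong sVec$ produces an element $g\in G$ of order $2$ with $\mathrm{rad}(b) = \langle g\rangle$, $b(g,-)\equiv 1$, and $q(g) = \theta_\chi = -1$. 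The desired decomposition $\C\cong\D\boxtimes sVec$ is equivalent to an orthogonal splitting $(G,q)\cong (H,q|_H)\oplus(\langle g\rangle, q|_{\langle g\rangle})$ with $b|_H$ non-degenerate.

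The main obstacle is the first step: showing $g\notin 2G$. I would argue by contradiction and suppose $g=2h$ for some $h\in G$. Then $4h = 2g = 0$, and $h$ must have order exactly $4$ (else $g$ vanishes). Thus $-h = 3h = g+h$, and combining $q(-h) = q(h)$ with $b(g,-)\equiv 1$ gives
\[
q(h) \;=\; q(-h) \;=\; q(g+h) \;=\; q(g)\,q(h)\,b(g,h) \;=\; q(g)\,q(h),
\]
whence $q(g)=1$, contradicting $q(g)=-1$. The cute identity $-h=3h=g+h$ together with the super-condition $q(g)=-1$ is what breaks the potential obstruction.

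Once $g\notin 2G$, its image in the $\mathbb{F}_2$-vector space $G/2G$ is non-zero, so I pick an $\mathbb{F}_2$-linear functional sending $\bar g$ to $1$ and lift it to a homomorphism $\phi:G\to\Z_2$ with $\phi(g)=1$. Setting $H:=\ker\phi$ gives $G=H\oplus\langle g\rangle$. For any $x=h+\epsilon g$ with $h\in H$ and $\epsilon\in\{0,1\}$, the fact that $g\in\mathrm{rad}(b)$ forces $q(x)=q(h)\,q(g)^{\epsilon}$, so $q$ decomposes as an orthogonal sum; moreover $\mathrm{rad}(b|_H)=H\cap\mathrm{rad}(b)=H\cap\langle g\rangle=\{0\}$, so $b|_H$ is non-degenerate. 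The pointed braided fusion category $\D$ attached to the pre-metric group $(H,q|_H)$ is then non-degenerate, and $\C\cong\D\boxtimes sVec$ follows. Steps two and three are purely formal consequences once $g\notin 2G$ is in hand.
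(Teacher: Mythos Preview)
Your argument is correct. Note, however, that the paper does not supply its own proof of this proposition: it is stated with attribution to \cite{DrGNO2,ENO2} and used as a black box. Your pre-metric group argument is essentially the one found in those references (see in particular \cite[Proposition~2.6(ii)]{ENO2}): reduce to showing that the fermion $g$ satisfies $g\notin 2G$, then split $G=H\oplus\langle g\rangle$ with $H=\ker\phi$ for a suitable homomorphism $\phi:G\to\Z_2$. The contradiction step via $q(h)=q(-h)=q(g+h)=q(g)q(h)$ is exactly the right mechanism, and the orthogonality of the splitting and non-degeneracy of $b|_H$ follow formally as you wrote. So there is nothing to compare against in the paper itself; your proof simply supplies what the paper omits, along the standard lines of the cited sources.
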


The following proposition was first proved in \cite[Corollary 2.7]{ENO2}, see also \cite[Theorem 3.9]{BGHNPRW}.

\begin{prop}For a slightly degenerate fusion category $\C$ with partition $\Q(\C)=\Pi_0\cup\Pi_1$, its $S$-matrix $S=\left(
                                                              \begin{array}{cc}
                                                                \widehat{S} & \widehat{S} \\
                                                                \widehat{S} & \widehat{S} \\
                                                              \end{array}
                                                            \right)$, where $\widehat{S}$ is a non-degenerate matrix with orthogonal rows and columns.
\end{prop}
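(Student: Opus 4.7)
The plan is to first establish the $2\times 2$ block form of $S$ from transparency of the fermion $\chi$, and then to deduce non-degeneracy and orthogonality of $\widehat S$ from the standard pre-modular orthogonality of $S$-rows.

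For the block form, the key identity is $s_{X,\chi\otimes Y}=s_{X,Y}$ for all simple $X,Y\in\Q(\C)$. Since $\chi\in\C'$, transparency gives $c_{\chi,X}\circ c_{X,\chi}=\mathrm{id}_{X\otimes\chi}$. Expanding $c_{\chi\otimes Y,X}\circ c_{X,\chi\otimes Y}$ by the hexagon axioms produces
$$(c_{\chi,X}\otimes\mathrm{id}_Y)\circ(\mathrm{id}_\chi\otimes(c_{Y,X}\circ c_{X,Y}))\circ(c_{X,\chi}\otimes\mathrm{id}_Y),$$
whose quantum trace, by cyclicity, equals the trace of $(\mathrm{id}_\chi\otimes(c_{Y,X}\circ c_{X,Y}))\circ((c_{X,\chi}\circ c_{\chi,X})\otimes\mathrm{id}_Y)=\mathrm{id}_\chi\otimes(c_{Y,X}\circ c_{X,Y})$. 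Hence $s_{X,\chi\otimes Y}=\dim(\chi)\,s_{X,Y}=s_{X,Y}$. The symmetry $s_{X,Y}=s_{Y,X}$ of any pre-modular $S$-matrix then yields $s_{\chi\otimes X,Y}=s_{X,Y}$ as well. Ordering $\Q(\C)=\Pi_0\sqcup\Pi_1$ by the bijection $X\leftrightarrow\chi\otimes X$, these two identities force $S$ to have the asserted block form with $\widehat S:=(s_{X,Y})_{X,Y\in\Pi_0}$.

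For non-degeneracy and orthogonality, I would invoke the pre-modular orthogonality formula (Müger \cite{Mu}): for any simple $X,Y\in\Q(\C)$,
$$\sum_{Z\in\Q(\C)}s_{X,Z}\overline{s_{Y,Z}}=\dim(\C)\ \ \text{if}\ Y\cong g\otimes X\ \text{for some invertible}\ g\in\C',$$
and vanishes otherwise (this follows from $\overline{s_{Y,Z}}=s_{Z,Y^*}$ together with the fusion-ring identity $(S^2)_{X,Y^*}=\dim(\C)\sum_{g\in\mathrm{Inv}(\C')}\delta_{X,g\otimes Y}$). In our setting $\C'\cong sVec$ contains only the invertibles $I$ and $\chi$, so for $X,Y\in\Pi_0$ the condition $Y\in\{X,\chi\otimes X\}$ reduces to $Y=X$, since $\chi\otimes X\in\Pi_1$. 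Combining with the column-doubling already established, $\sum_{Z\in\Q(\C)}=2\sum_{Z\in\Pi_0}$, and therefore
$$\sum_{Z\in\Pi_0}s_{X,Z}\overline{s_{Y,Z}}=\frac{\dim(\C)}{2}\,\delta_{X,Y}\qquad(X,Y\in\Pi_0).$$
This is exactly orthogonality of the rows of $\widehat S$ and implies $\widehat S$ is non-degenerate; orthogonality of columns follows by the symmetry $s_{X,Y}=s_{Y,X}$.

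The substantive input is the pre-modular orthogonality formula; once it is in hand, the rest is pure bookkeeping with the $\chi$-orbit structure on $\Q(\C)$. The only delicate point is the block-form step, which genuinely requires transparency of $\chi$ (together with the convention $\dim=FPdim$ forcing $\dim(\chi)=1$), not merely its invertibility; care is also needed to ensure $\Pi_0$ is chosen closed under duality so that the Kronecker delta above lines up properly with the block indices.
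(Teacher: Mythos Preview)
Your argument is correct, and it follows the standard route found in the references the paper cites. Note however that the paper itself does \emph{not} supply a proof of this proposition: it is quoted directly from \cite[Corollary~2.7]{ENO2} and \cite[Theorem~3.9]{BGHNPRW}, so there is no ``paper's proof'' to compare against beyond those sources.

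Your derivation of the block form from transparency of $\chi$ is exactly the right mechanism (and relies, as you note, on the paper's standing convention $\dim=FPdim$, which forces $\dim(\chi)=1$; with the super-dimension one would get a sign). For the orthogonality step, the identity you invoke can be derived cleanly from two standard pre-modular facts: the character property $s_{X,Z}s_{Y,Z}=\dim(Z)\sum_W N_{X,Y}^W s_{W,Z}$ and the ``column sum'' identity $\sum_Z \dim(Z)\,s_{W,Z}=\dim(\C)\dim(W)$ for $W\in\Q(\C')$ and $0$ otherwise. Combining these with $\overline{s_{Y,Z}}=s_{Y^*,Z}$ gives
\[
\sum_{Z\in\Q(\C)} s_{X,Z}\overline{s_{Y,Z}}=\dim(\C)\sum_{W\in\Q(\C')} N_{X,Y^*}^{W}\dim(W),
\]
which in the slightly degenerate case is $\dim(\C)(\delta_{X,Y}+\delta_{X,\chi\otimes Y})$. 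One small imprecision: your general formula should sum over all simple objects of $\C'$ (weighted by $\dim(W)$), not just over $\mathrm{Inv}(\C')$; the two coincide here only because $\C'\cong sVec$ is pointed. With that adjustment the argument is complete and matches the treatment in \cite{BGHNPRW}.
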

In \cite{BGHNPRW,BGNPRW}, for a slightly degenerate fusion category $\C$ with partition $\Q(\C)=\Pi_0\cup\Pi_1$, there is a naive fusion rule $\widehat{N}={\{\widehat{N}_{X}|X\in\Pi_0}\}$, $\widehat{N}_{X}=((\widehat{N}_{X})_{Z,Y})$, where $(\widehat{N}_{X})_{Z,Y}=\widehat{N}^Z_{X,Y}:=N^Z_{X,Y}+N^{Z\otimes \chi}_{X,Y}$, $X, Y,Z\in \Pi_0$, $N^Z_{X,Y}:=dim_kHom_\C(X\otimes Y,Z)$.

Note that for any pre-modular category $\C$, $s_{X,Y^*}=\overline{s_{X,Y}}$, $X,Y\in\Q(\C)$. Just like modular case, we have  the following proposition:
\begin{prop}[\cite{BGNPRW}, Proposition 2.7]\label{orthomatrix}For a super-modular fusion category $\C$, the following  hold:
\begin{description}
  \item[(1)] $\widehat{S}\overline{\widehat{S}}=\frac{1}{2}dim(\C) E$, E is the unit matrix, i.e. $\widehat{S}$ is projectively unitary.
  \item[(2)]$\widehat{N}$ is a commutative fusion rule, that is $\widehat{N}_X\widehat{N}_Y=\widehat{N}_Y\widehat{N}_X$, $X,Y\in\Pi_0$.
  \item[(3)]The functions $\varphi_Y(X):=\frac{\widehat{s}_{X,Y}}{\widehat{s}_{Y,I}}$ $(Y\in \Pi_0)$ form a set of orthogonal characters of the algebra generated by $\Pi_0$ with naive fusion rule.
  \item[(4)]$\widehat{N}^Z_{X,Y}=\Sigma_{V\in \Pi_0}\frac{\frac{2}{dim(\C)}\widehat{s}_{X,V}\widehat{s}_{V,Y}\widehat{s}_{Z,V^*}}{\widehat{s}_{V,I}}$, $X,Y,Z\in\Pi_0$.
\end{description}
\end{prop}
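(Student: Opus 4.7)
The four parts follow the pattern of the Verlinde machinery in the modular setting and rest on two ingredients: the block form $S=\bigl(\begin{smallmatrix}\widehat S&\widehat S\\ \widehat S&\widehat S\end{smallmatrix}\bigr)$ of the preceding proposition, equivalent to $s_{X,\chi\otimes Y}=s_{X,Y}$ (which holds because $\chi\in\C'$ is transparent), and the row-orthogonality valid in any pre-modular category,
$$\sum_{Y\in\Q(\C)}s_{X,Y}\,\overline{s_{Z,Y}}=\dim(\C)\cdot\#\{W\in\Q(\C'):Z\cong X\otimes W\},$$
a standard consequence of Theorem \ref{dimensionform} and the M\"uger centralizer theory in \cite{DrGNO2}.

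For (1), I would specialize the orthogonality to $\C'=sVec$, where the right-hand side becomes $\dim(\C)(\delta_{X,Z}+\delta_{X,\chi\otimes Z})$. For $X,Z\in\Pi_0$ this collapses to $\dim(\C)\delta_{X,Z}$ because $\chi\otimes Z\in\Pi_1$. Splitting $\sum_{Y\in\Q(\C)}$ along $\Pi_0\cup\Pi_1$ and invoking the block form, both halves contribute equally, yielding $\widehat S\,\overline{\widehat S}=\tfrac{1}{2}\dim(\C)E$. For (2), since $[\chi]^2=[I]$ in $\text{Gr}(\C)$ the element $[\chi]-[I]$ generates an ideal, and the quotient ring $R:=\text{Gr}(\C)_{\mathbb{Q}}/([\chi]-[I])$ has basis indexed by $\Pi_0$ with structure constants exactly $\widehat N^Z_{X,Y}$; thus $\widehat N_X$ is left multiplication by $[X]$ in $R$, and commutativity is inherited from $\text{Gr}(\C)$ because $\C$ is braided.

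Parts (3) and (4) amount to simultaneously diagonalizing the commuting family $\{\widehat N_X\}$ via the columns of $\widehat S$, which is invertible by (1). A reindexing using $s_{\chi\otimes W,Y}=s_{W,Y}$ and the Frobenius identity $N^Z_{X,W}=N^W_{X^*,Z}$ converts $\sum_{W\in\Pi_0}\widehat N^Z_{X,W}\widehat s_{W,Y}$ into $s_{X^*\otimes Z,Y}$; granting the super-modular Verlinde identity $s_{X^*\otimes Z,Y}=s_{X^*,Y}s_{Z,Y}/d_Y$ for $Y\in\Pi_0$, this is precisely the eigenvalue relation certifying that $\varphi_Y$ (up to the involution $Y\mapsto Y^*$) is a character of $R$, giving (3); orthogonality of the $\varphi_Y$ is (1) restated. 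For (4) the diagonalization $\widehat N_X=\widehat S\,D_X\,\widehat S^{-1}$ with $\widehat S^{-1}=\tfrac{2}{\dim(\C)}\overline{\widehat S}$ (from (1)) unpacks entry-wise into the stated sum.

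The hard part is the super-modular Verlinde identity for $Y\in\Pi_0\setminus\C'$: in the modular case invertibility of the full $S$-matrix supplies it automatically, whereas in an arbitrary pre-modular category it holds only when $Y\in\C'$. In the super-modular case I would exploit the decomposition of $\text{Gr}(\C)_{\mathbb{C}}$ into the $\pm 1$-eigenspaces of multiplication by $[\chi]$, observe that characters of $R$ correspond to $+1$-eigenspace characters of $\text{Gr}(\C)_{\mathbb{C}}$, and use invertibility of $\widehat S$ from (1) to force the $|\Pi_0|$ functions $\varphi_Y$ to exhaust them; the Verlinde identity then falls out as the induced eigenvalue relation, closing the loop.
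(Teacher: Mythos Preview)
The paper does not supply a proof of this proposition: it is quoted verbatim from \cite{BGNPRW} and used as a black box, so there is no argument in the paper to compare against. Your outline is therefore being measured only on its own merits.

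Your route through parts (1)--(4) is sound and standard. One correction, however: the ``hard part'' you isolate is not hard at all, and your last paragraph is unnecessary. The identity
\[
\frac{s_{A,Y}}{d_Y}\cdot\frac{s_{B,Y}}{d_Y}=\frac{s_{A\otimes B,Y}}{d_Y}
\]
(equivalently $s_{X^*\otimes Z,Y}=s_{X^*,Y}s_{Z,Y}/d_Y$) holds for \emph{every} simple $Y$ in \emph{any} ribbon category; see e.g.\ \cite[Proposition~8.13.10]{EGNO} or \cite[(2.38)]{DrGNO2}. It is a direct consequence of the hexagon axioms via the graphical calculus and needs neither modularity nor $Y\in\C'$. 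What fails in a general pre-modular category is not this character property but the invertibility of the $S$-matrix, so one cannot \emph{solve} for the fusion coefficients. In the super-modular case you have already secured invertibility of $\widehat S$ in part (1), and since $h_Y(\chi)=s_{\chi,Y}/d_Y=1$ (transparency of $\chi$), each $h_Y$ descends to the quotient $R=\mathrm{Gr}(\C)_{\mathbb C}/([\chi]-[I])$. Invertibility of $\widehat S$ then shows the $|\Pi_0|$ characters $\varphi_Y$ are distinct, hence form a complete orthogonal set, and (4) follows by writing $\widehat N_X=\widehat S\,D_X\,\widehat S^{-1}$ with $\widehat S^{-1}=\tfrac{2}{\dim(\C)}\overline{\widehat S}$ exactly as you say. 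So you may simply delete the final paragraph and invoke the standard character property directly.
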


Note that in Proposition \ref{orthomatrix}  (and corollary below), for simple object $X$ of a spherical fusion category $\C$, we do not require $dim(X)=FPdim(X)$. For any pre-modular fusion category $\C$, $\forall X\in\Q(\C)$, the ratio $\frac{dim(\C)}{dim(X)}$ is an algebraic integer; moreover if $\C$ is a  modular fusion category, then ratio $\frac{dim(\C)}{dim(X)^2}$ is an algebraic integer, see \cite[$\S 8.14$, $\S9.3$]{EGNO} for details, for example. Now we strengthen this property for super-modular fusion categories.

\begin{coro}\label{sqdimension}For a super-modular fusion category $\C$,  $\frac{dim(\C)}{2dim(X)^2}$ is an algebraic integer, $\forall X\in \mathcal{O}(\C)$.
\end{coro}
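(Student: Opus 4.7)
The plan is to mimic the classical modular-case argument. In a modular category one shows that $\sum_Y N_Y N_{Y^*}$ acts on every simple $V$ by the scalar $\dim(\C)/\dim(V)^2$, which is therefore an algebraic integer because the matrix is an integer combination of fusion matrices. Proposition \ref{orthomatrix} packages exactly the ingredients needed to run the same calculation with the naive fusion matrices $\widehat{N}_Y$ of a super-modular category, and the factor of $2$ should drop out automatically from the identity $\widehat{S}\overline{\widehat{S}} = \tfrac{1}{2}\dim(\C) E$.

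First I would reduce to the case $V \in \Pi_0$: for $V \in \Pi_1$ Lemma \ref{slightMugercenter} together with the partition $\Q(\C) = \Pi_0 \cup \Pi_1$ provides a unique $V' \in \Pi_0$ with $V \cong \chi \otimes V'$, and because $\chi$ is invertible with $\dim(\chi) = 1$ we get $\dim(V) = \dim(V')$, so the statement for $V$ is equivalent to the statement for $V'$.

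Now fix $V \in \Pi_0$. By Proposition \ref{orthomatrix}(2)--(3), the matrices $\{\widehat{N}_X : X \in \Pi_0\}$ commute, and $\varphi_V(X) = \widehat{s}_{X,V}/\widehat{s}_{V,I}$ is an algebra character; hence for any polynomial $P$ in these commuting matrices the $V$-eigenvalue of $P(\widehat{N}_X)_X$ equals $P(\varphi_V(X))_X$. I then consider
\[
M := \sum_{Y \in \Pi_0} \widehat{N}_Y\, \widehat{N}_{Y^*},
\]
which is a matrix of non-negative integers (note that $Y \in \Pi_0$ implies $Y^* \in \Pi_0$, so each $\widehat{N}_{Y^*}$ belongs to the algebra). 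Its eigenvalues are algebraic integers. Using $\widehat{s}_{Y^*,V} = \overline{\widehat{s}_{Y,V}}$ (which comes from $s_{Y^*,V} = \overline{s_{Y,V}}$), its $V$-eigenvalue is
\[
\sum_{Y \in \Pi_0} \varphi_V(Y)\varphi_V(Y^*) = \frac{1}{\widehat{s}_{V,I}^{\,2}} \sum_{Y \in \Pi_0} \bigl|\widehat{s}_{Y,V}\bigr|^2 = \frac{\dim(\C)}{2\dim(V)^2},
\]
where the last equality uses the column orthogonality $\sum_{Y \in \Pi_0}|\widehat{s}_{Y,V}|^2 = \tfrac{1}{2}\dim(\C)$ coming from Proposition \ref{orthomatrix}(1).

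The only point requiring attention is the passage from ``$\widehat{N}_Y$ is an algebraic-integer matrix with eigenvalue $\varphi_V(Y)$ at $V$'' to ``the polynomial expression $M$ has eigenvalue $\sum_Y \varphi_V(Y)\varphi_V(Y^*)$ at $V$''; this is legitimate precisely because the $\widehat{N}_Y$ pairwise commute and are simultaneously diagonalized by the character $\varphi_V$, which is exactly Proposition \ref{orthomatrix}(2)--(3). I do not foresee any genuine obstacle once that proposition is in hand — the factor of $\tfrac{1}{2}$ present in (1) is what turns the modular-case output $\dim(\C)/\dim(V)^2$ into the sharper super-modular output $\dim(\C)/(2\dim(V)^2)$.
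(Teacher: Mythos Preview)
Your proposal is correct and is essentially the same argument as the paper's: both reduce to $X\in\Pi_0$, use Proposition \ref{orthomatrix}(1) to rewrite $\dim(\C)/(2\dim(X)^2)$ as $\sum_{Y\in\Pi_0}\varphi_X(Y)\varphi_X(Y^*)=\sum_{Y\in\Pi_0}\varphi_X(Y)\varphi_{X^*}(Y)$, and then invoke Proposition \ref{orthomatrix}(3) to conclude algebraic integrality. The only cosmetic difference is that you package the sum as an eigenvalue of the integer matrix $M=\sum_Y\widehat{N}_Y\widehat{N}_{Y^*}$, whereas the paper simply observes that each summand is a product of character values and hence of algebraic integers.
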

\begin{proof}The proof is same as modular case (see e.g.  \cite[Proposition 8.14.6]{EGNO}), we include it for the reader's convenience. It is sufficient to prove this for $X\in\Pi_0$. By Proposition \ref{orthomatrix} $(1)$, for all $ X\in\Pi_0$, we have
\begin{align*}
\frac{dim(\C)}{2dim(X)^2}=\frac{\Sigma_{Y\in \Pi_0}s_{X,Y}s_{Y,X^*}}{s^2_{I,X}}=\Sigma_{Y\in \Pi_0}\frac{s_{X,Y}}{s_{I,X}}\frac{s_{Y,X^*}}{s_{I,X}}=\Sigma_{Y\in \Pi_0}\frac{s_{X,Y}}{s_{I,X}}\frac{s_{Y,X^*}}{s_{I,X^*}},
 \end{align*}
 the last equality is because $s_{I,X}=dim(X)=dim(X^*)=s_{I,X^*}$. Then the above sum  is an algebraic integer by Proposition \ref{orthomatrix} $(3)$.
\end{proof}
Below we give some direct conclusions about slightly degenerate fusion categories.
\begin{lemm}\label{general2^ndsldege}Let $\C$ be a slightly degenerate fusion category and $FPdim(\C)=2^nd$, where d is an odd  square-free integer, then $\C$ is pointed if $1\leq n\leq2$ or $2^n\mid FPdim(\C_{pt})$.
\end{lemm}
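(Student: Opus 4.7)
My plan is to handle the two hypotheses of the lemma separately, bootstrapping the second case from the first.

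\emph{Case 1 ($1\le n\le 2$).} Since $\C'\cong sVec$ is super-Tannakian, Theorem \ref{slightstrictweakly} together with $8\nmid 2^n d$ forces $\C$ to be integral. Corollary \ref{sqdimension} then implies $\frac{FPdim(\C)}{2FPdim(X)^2}$ is a positive rational algebraic integer, hence an integer, for every simple $X$; equivalently $FPdim(X)^2\mid 2^{n-1}d$. Writing $FPdim(X)=2^a m$ with $m$ odd, the square-freeness of $d$ forces $m^2\mid d$ and hence $m=1$, while $2a\le n-1\le 1$ forces $a=0$. Therefore every simple object is invertible, so $\C$ is pointed.

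\emph{Case 2 ($2^n\mid FPdim(\C_{pt})$).} Write $FPdim(\C_{pt})=2^n k$; then $k\mid d$ since $FPdim(\C_{pt})\mid FPdim(\C)$. The strategy is to first show that $\C_{pt}$, viewed as a braided fusion category on its own, is itself slightly degenerate; one can then split off a pointed non-degenerate subcategory using Proposition \ref{slighpointed} and reduce to Case 1 via M\"uger's decomposition.

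To establish slight degeneracy of $\C_{pt}$, apply Theorem \ref{dimensionform} to $\C_{pt}\subseteq\C$: since $\C_{pt}\cap\C'=\C'$, we obtain $FPdim((\C_{pt})')=2d/k$. Because $d$ is odd square-free with $k\mid d$, the odd parts $k$ and $d/k$ are coprime, and comparing $2$-adic valuations gives $\gcd(2^n k,2d/k)=2$. Consequently $FPdim(\C_{pt}\cap(\C_{pt})')\mid 2$; but $\C'\subseteq\C_{pt}\cap(\C_{pt})'$ already contributes FP-dimension $2$, so $\C_{pt}\cap(\C_{pt})'=\C'$, i.e.\ the M\"uger center of $\C_{pt}$ is $sVec$. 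Proposition \ref{slighpointed} then gives $\C_{pt}\cong\D\boxtimes sVec$ with $\D$ non-degenerate pointed of FP-dimension $2^{n-1}k$, and \cite[Theorem 3.13]{DrGNO2} yields $\C\cong\D\boxtimes\D'$ with $FPdim(\D')=2d/k$. Since $\D$ is non-degenerate and M\"uger centers split under $\boxtimes$, $\D'$ is slightly degenerate; as $d/k$ is odd square-free, Case 1 applied to $\D'$ (with $n=1$) shows $\D'$ is pointed, hence so is $\C$.

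The main obstacle is the passage to slight degeneracy of $\C_{pt}$; the decisive numerical input is $\gcd(k,d/k)=1$, which is exactly where the square-freeness of $d$ is used.
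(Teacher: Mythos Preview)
Your Case 1 coincides with the paper's. For Case 2 the paper takes a different route: it applies Theorem \ref{dimensionform} to $\C_{ad}$ (using $(\C_{ad})'=\C_{pt}$ from Proposition \ref{admugercenter}) to see that $FPdim(\C_{ad})$ divides $2d/k$ and is therefore square-free; it then argues that $\C_{ad}$ is either non-degenerate or slightly degenerate, so the square-dimension divisibility forces $\C_{ad}$ to be pointed, whence $\C$ is nilpotent and \cite[Corollary 5.3]{GN} gives $FPdim(X)^2\mid FPdim(\C_{ad})$ for every simple $X$, making $\C$ pointed. Your approach via $\C_{pt}$ instead of $\C_{ad}$ is cleaner: the gcd computation pins down the M\"uger center of $\C_{pt}$ on the nose, and M\"uger's splitting plus Case~1 finishes immediately, avoiding the somewhat compressed step in the paper where one must identify the M\"uger center of $\C_{ad}$ (which is $(\C_{ad})_{pt}$, not $\C'\cap\C_{ad}$). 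The paper's route, on the other hand, yields the intermediate statement that $\C_{ad}$ itself is pointed, and stays within arguments (nilpotency, \cite{GN}) that recur throughout the section.
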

\begin{proof} By Theorem \ref{slightstrictweakly}, $\C$ is integral if $n\leq2$, and $FPdim(X)^2|2^{n-1}d$ by Corollary \ref{sqdimension}, $\forall X\in\Q(\C)$. As $d$ is odd and square-free, hence $\C$ is pointed if $n\leq2$. If $2^n\mid FPdim(\C_{pt})$, then $\C_{ad}$ is braided and square-free by Theorem \ref{dimensionform}. If $\C'\cap\C_{ad}=Vec$, then $\C_{ad}$ is a modular fusion category; if $\C'\cap\C_{ad}=sVec$, and $\C_{ad}$ is slightly degenerate. Hence in both cases $\C_{ad}$ is pointed, and then $2^nd|FPdim(\C_{pt})$. That is, $\C$ is pointed.
\end{proof}
\begin{prop}\label{integralslightly2^nd}Let $\C$ be an integral non-pointed slightly degenerate fusion category of FP-dimension $2^nd$, where $d$ is an odd square-free integer, then $8|FPdim(\C_{pt})|2^{n-2}d$. Moreover, $\E'$ is group-theoretical, where $\E$ is a maximal Tannakian subcategory of $\C$.
\end{prop}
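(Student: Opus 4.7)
The plan is to first pin down the FP-dimensions of simple objects, then exploit the $\chi$-pairing to handle the lower divisibility $8 \mid FPdim(\C_{pt})$, next to bound the $2$-adic valuation of $FPdim(\C_{pt})$ from above, and finally to deduce the group-theoretical claim by de-equivariantizing along a maximal Tannakian subcategory.

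Starting from Corollary \ref{sqdimension}, every simple $X \in \Q(\C)$ satisfies $FPdim(X)^{2} \mid 2^{n-1}d$; since $d$ is odd and square-free, writing $FPdim(X) = 2^{a}m$ with $m$ odd forces $m^{2} \mid d$ and hence $m = 1$, so every simple of $\C$ has FP-dimension a power of $2$. Lemma \ref{general2^ndsldege} combined with non-pointedness then gives $n \geq 3$. By Lemma \ref{slightMugercenter}, tensoring with $\chi$ acts freely and FP-dimension-preservingly on the non-invertible simples, so for each $a \geq 1$ the number $m_{a}$ of simples of FP-dimension $2^{a}$ is even. Consequently
\[
FPdim(\C) - FPdim(\C_{pt}) = \sum_{a \geq 1} 4^{a}m_{a} \equiv 0 \pmod{8},
\]
because $4m_{1}$ is divisible by $8$ (as $m_{1}$ is even) and $4^{a}$ is already a multiple of $16$ for $a \geq 2$. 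Since $n \geq 3$ gives $8 \mid 2^{n}d$, this yields $8 \mid FPdim(\C_{pt})$.

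For the upper bound $FPdim(\C_{pt}) \mid 2^{n-2}d$, I would exploit the identity $FPdim(\C_{ad})FPdim(\C_{pt}) = FPdim(\C) \cdot FPdim(\C_{ad} \cap \C')$ from Proposition \ref{admugercenter} and Theorem \ref{dimensionform}, reducing the claim to a lower bound $v_{2}(FPdim(\C_{ad})) \geq 2$ (respectively $\geq 3$ when $\chi \in \C_{ad}$). The key input is a parity argument: for a non-invertible $X$ with $FPdim(X) = 2^{a}$, $a \geq 1$, decompose $X \otimes X^{\ast} = \bigoplus_{g \in G_{X}} g \oplus (\text{non-invertible part})$ and reduce $FPdim(X)^{2} = |G_{X}| + \sum_{\text{non-inv }Y} n_{X,X^{\ast}}^{Y}FPdim(Y)$ modulo $2$; since every non-invertible FP-dimension is even, $|G_{X}|$ is even, hence $|G_{X}| \geq 2$, and $G_{X} \subseteq G(\C_{ad})$ by construction. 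Combining this with the $\chi$-pairing applied internally to $\C_{ad}$ (when $\chi \in \C_{ad}$) and with the divisibility $FPdim(X)^{2} \mid FPdim(\C_{ad})$ in the non-degenerate quotient (in the complementary case) gives the required $2$-adic bound, which translates into $v_{2}(FPdim(\C_{pt})) \leq n-2$; together with $FPdim(\C_{pt}) \mid 2^{n}d$ this produces $FPdim(\C_{pt}) \mid 2^{n-2}d$.

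For the group-theoretical statement, let $\E = Rep(G)$ be a maximal Tannakian subcategory of $\C$. By Remark \ref{Tannmugdeequiv}, $\E'_{G}$ is slightly degenerate, and maximality of $\E$ forces $\E'_{G}$ to contain no non-trivial Tannakian subcategory. Imitating the closing paragraph of the proof of Theorem \ref{slightstrictweakly}, \cite[Corollaries 5.19 and 5.29]{DrGNO2} give $(\E'_{G})_{pt} \cap (\E'_{G})_{ad} \subseteq sVec$, and a brief case split shows $\E'_{G}$ is either pointed or of the form $\M \boxtimes sVec$ with $\M$ modular; both are group-theoretical, hence so is $\E' \cong (\E'_{G})^{G}$. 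The most delicate step will be the $2$-adic analysis above: the configuration $\chi \in \C_{ad}$ with minimal stabilizer $|G_{X}| = 2$ and $|G(\C_{ad})| = 4$ has to be excluded by combining the parity of $|G_{X}|$ with the $\chi$-pairing of invertibles in $\C_{ad}$ across all non-invertible simples.
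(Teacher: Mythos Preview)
Your argument for the lower bound $8\mid FPdim(\C_{pt})$ is correct and identical to the paper's. The problems lie in the upper bound and in the group-theoretical claim.

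\textbf{Upper bound.} The parity observations you make ($|G_X|$ even, $m_a$ even) yield only $2\mid |G(\C_{ad})|$; they do \emph{not} exclude $v_2(FPdim(\C_{ad}))=1$ when $\chi\notin\C_{ad}$ nor $v_2(FPdim(\C_{ad}))=2$ when $\chi\in\C_{ad}$. Concretely, if $\chi\notin\C_{ad}$ and $|G(\C_{ad})|=2$, then $FPdim(\C_{ad})\equiv 2\pmod 4$ and nothing you wrote rules this out; your ``non-degenerate quotient'' is not well-defined here, since the M\"uger center of $\C_{ad}$ is $(\C_{ad})_{pt}$, which has order $\geq 2$. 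The paper (after reducing to $FPdim(\C_{pt})=2^{n-1}$) handles this case by showing, via the balancing equation and Proposition~\ref{admugercenter}, that the unique nontrivial $g\in G(\C_{ad})$ has $\theta_g=1$, and then invoking orthogonality of the columns $s_{g,-}$ and $s_{I,-}$ of $\widehat S$ (Proposition~\ref{orthomatrix}) to obtain a contradiction. For the case $\chi\in\C_{ad}$ the paper does not argue inside $\C_{ad}$ at all: it counts invertibles in each component $\C_h$ of the universal grading, using the $\chi$-pairing to force $4\mid a_h$ for every $h\in U_\C$, whence $FPdim(\C_{pt})\geq 4|U_\C|=2^n$, a contradiction. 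Your proposal contains no analogue of either step, and the ``delicate configuration'' you flag is precisely the one the paper resolves with this component count.

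\textbf{Group-theoretical part.} Your conclusion ``$\E'_G$ is either pointed or $\M\boxtimes sVec$ with $\M$ modular; both are group-theoretical'' is wrong as stated: an arbitrary modular $\M$ is not group-theoretical. The paper proceeds differently, by induction on FP-dimension: $\D=\E'_G$ is again integral slightly degenerate of dimension $2^{n-2j}d$, so if $\D$ is not pointed the first part of the proposition (applied to $\D$) furnishes a non-trivial Tannakian subcategory of $(\D_{ad})_{pt}$, contradicting weak anisotropy of $\D$ via \cite[Corollaries 5.19, 5.29]{DrGNO2}; therefore $\D$ is pointed and $\E'\cong\D^G$ is group-theoretical by \cite[Theorem 7.2]{NNW}. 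Your case split can in fact be repaired: in the ``$\M\boxtimes sVec$'' branch one has $\M_{pt}=Vec$, and the parity $2\mid|G_X|$ for every non-invertible $X$ then forces $\M$ to have no non-invertible simples, hence $\M=Vec$; but you must say this rather than assert that modular implies group-theoretical.
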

\begin{proof}If $\C$ is integral and non-pointed, so $n\geq3$ by Lemma \ref{general2^ndsldege}. By Corollary \ref{sqdimension} simple objects $X$ have FP-dimensions $2^i$, $0\leq i\leq t$ for some positive integer $t$. Then \begin{align*}
FPdim(\C)=FPdim(\C_{pt})+2^2 a_1+2^4 a_2+\cdots 2^{2t}a_t,
\end{align*}
where $a_j$ is the number of isomorphism classes of simple objects of FP-dimension $2^j$, and $2|a_j$ for all $1\leq j\leq t$ by Lemma \ref{slightMugercenter}, hence $8|FPdim(\C_{pt})$.

By Lemma \ref{general2^ndsldege} $2^n\nmid FPdim(\C_{pt})$. If $2^{n-1}\mid FPdim(\C_{pt})$,  then we can assume $FPdim(\C_{pt})=2^{n-1}$. Indeed if there exists odd prime $p|FPdim(\C_{pt})$, then $\C$ contains a modular fusion category $\A$ of FP-dimension $p$ since $d$ is square-free, then $\C\cong\A\boxtimes\A'$ and we replace $\C$ by $\A'$. By Theorem \ref{dimensionform} $FPdim(\C_{ad})=\left\{
 \begin{array}{ll}
2d, & \hbox{$\C'\cap\C_{ad}=Vec$,} \\
4d, & \hbox{$\C'\subseteq\C_{ad}$.}
\end{array}
\right.$. If $d=1$, then $\C$ is not integral if $\C'\cap\C_{ad}=Vec$, while if $\C'\cap\C_{ad}=sVec$, $\chi\subseteq X\otimes X^*$ for non-invertible simple object $X$ by Lemma \ref{slightMugercenter}, this is a contradiction. So, $d>1$. If $FPdim(\C_{ad})=2d$, then $\C_{ad}$ contains a unique non-trivial invertible simple object $g$, therefore $g\subseteq X\otimes X^*$ for any non-invertible simple object $X$ of $\C$. Meanwhile,  since $\C_{ad}$ is not pointed, for non-invertible object $X\in\Q(C_{ad})$, balancing equation  \cite[Proposition 8.13.8]{EGNO} and Proposition \ref{admugercenter} together mean that $\theta_g=1$, while this contradicts orthogonality of characters $s_{g,-}$ and $s_{I,-}$ in Proposition \ref{orthomatrix}. Then $FPdim(\C_{ad})=4d$ and $|U_\C|=2^{n-2}$, so we have equation
\begin{align*}
4d=FPdim(\C_h)=a_h+2^2(b_h)_1+2^4(b_h)_2+\cdots 2^{2t}(b_h)_t,
\end{align*}
where $a_h$ and $(b_h)_i$ are  the numbers of isomorphism classes of invertible objects and simple objects of FP-dimension $2^i$ contained in component $\C_h$ respectively, $ 1\leq i\leq t$, $\forall h\in U_\C$. Note that $\C'\subseteq\C_{ad}$, then Lemma \ref{slightMugercenter} implies $2|(b_h)_i$, $\forall 1\leq i\leq t$. Therefore $a_h\neq0$ $(\forall h\in U_\C)$, otherwise $2|d$, contradiction. Consequently $4|a_h$ $(\forall h\in U_\C)$, which then implies $FPdim(\C_{pt})\geq 2^n$, this is impossible.

As we have seen, if $\C$ is not pointed, then $2^{n-1}\nmid FPdim(\C_{pt})$ and $4\mid FPdim(\C_{ad})$. Hence, by computing $FPdim(\C_{ad})$, we see that $(\C_{ad})_{pt}$ always contains a non-trivial Tannakian subcategory, let $\E=Rep(G)$ be a maximal Tannakian subcategory of $\C$, by Corollary \ref{Tannakiandimen} $|G|=2^j$ for some positive integer $j$, then Remark \ref{Tannmugdeequiv} shows that  $\D:=(\E')_G$ is a slightly degenerate fusion category of FP-dimension $2^{n-2j}d$. If $\D$ is not pointed, then by induction on FP-dimension, $(\D_{ad})_{pt}$ contains a non-trivial Tannakian subcategory, however this contradicts \cite[Corollary 5.29]{DrGNO2}, since $\D$ is weakly anisotropic by \cite[Corollary 5.19]{DrGNO2}. As an equivariantization of pointed fusion category, then $\E'$ is group-theoretical \cite[Theorem 7.2]{NNW}.
\end{proof}
\begin{coro}\label{slightly2^nd}Let $\C$ be an integral  slightly degenerate fusion category of FP-dimension $2^nd$, if $FPdim(\C_{pt})=2^{n-2}m$, where $d$ is an odd square-free integer, $m|d$, then $\C$ is group-theoretical.
\end{coro}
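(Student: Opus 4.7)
If $\C$ is pointed, the conclusion is automatic, so assume $\C$ is non-pointed. Applying Proposition~\ref{integralslightly2^nd}, the divisibility $8\mid FPdim(\C_{pt})=2^{n-2}m$ with $m$ odd forces $n\geq 5$, and we extract a maximal Tannakian subcategory $\E=Rep(G)\subseteq\C$ with $|G|=2^j$ for some $j\geq 1$ sitting inside $(\C_{ad})_{pt}$ (so $G$ is abelian), such that the core $\D:=(\E')_G$ is a pointed slightly degenerate fusion category. In particular $\E'\cong\D^G$ is group-theoretical by \cite[Theorem 7.2]{NNW}.

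The strategy is to upgrade the group-theoreticality of $\E'$ to that of $\C$ by exhibiting $\C$ as an equivariantization of a pointed fusion category. Concretely, I would try to prove that the whole de-equivariantization $\C_G$ --- and not just its trivial component $\D$ --- is pointed. The hypothesis $FPdim(\C_{pt})=2^{n-2}m$ is the extreme value permitted by Proposition~\ref{integralslightly2^nd}: combining it with Theorem~\ref{dimensionform} and Proposition~\ref{admugercenter} pins down $FPdim(\C_{ad})=8d/m$ when $\C'\subseteq\C_{ad}$ (respectively $4d/m$ otherwise), so $\C_{ad}$ is forced to be quite small. Because $\E\subseteq\C_{ad}$, the universal $U_\C$-grading of $\C$ descends to a grading of $\C_G$, and one matches this with the $G$-crossed grading whose trivial component is $\D$; running through the resulting bookkeeping one checks that the sheer number of invertibles in $\C_{pt}$ relative to the smallness of $\C_{ad}$ forces every simple object of $\C_G$ to be invertible.

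Once $\C_G$ is pointed, $\C\cong(\C_G)^G$ displays $\C$ as an equivariantization of a pointed fusion category, and a second application of \cite[Theorem 7.2]{NNW} gives that $\C$ is group-theoretical. The principal difficulty is the middle paragraph: promoting the pointedness of $\D$ to pointedness of all of $\C_G$, rather than merely to the inclusion $(\C_{pt})_G\subseteq(\C_G)_{pt}$ of FP-dimension $2^{n-2-j}m$ inside an ambient $FPdim(\C_G)=2^{n-j}d$. If the direct count proves stubborn, an alternative route is induction on $FPdim(\C)$: replace $\C$ with the strictly smaller slightly degenerate category $\E'$ (smaller by Remark~\ref{Tannmugdeequiv}), verify that the hypothesis $FPdim((\E')_{pt})$ attains its corresponding maximum, and apply the inductive hypothesis to conclude group-theoreticality of $\E'$; then climb the single step from $\E'$ to $\C$ using that $\C$ is a $\hat{G}$-extension of $\E'$ with $|G|=2^j$ coming from the Tannakian $\E$.
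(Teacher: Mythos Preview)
Your endgame is correct: one wants $\C_G$ pointed so that \cite[Theorem 7.2]{NNW} finishes the argument. What is missing is the mechanism that actually forces pointedness of all of $\C_G$, not just of its trivial component $\D=(\E')_G$.

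The paper does not attempt any grading bookkeeping. It first makes a reduction you skipped: since $m\mid d$ is odd square-free and $\C'\cong sVec$, the odd part of $\C_{pt}$ is non-degenerate, so $\C\cong\A\boxtimes\A'$ with $\A$ pointed modular of dimension $m$, and one may assume $FPdim(\C_{pt})=2^{n-2}$. Then $FPdim(\C_{ad})\in\{4d,8d\}$ and $(\C_{ad})_{pt}$ is symmetric of $2$-power dimension. The paper now chooses $\E=Rep(G)$ to be the Tannakian part of $(\C_{ad})_{pt}$, so that $\E\subseteq\C_{ad}$ and $|G|\in\{2,4\}$; this is \emph{not} a maximal Tannakian of $\C$, and your claim that the maximal Tannakian sits inside $(\C_{ad})_{pt}$ is not what Proposition~\ref{integralslightly2^nd} says. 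The decisive step is the containment $(\C_G)_{ad}\subseteq(\C_{ad})_G$ from \cite[Proposition 4.30]{DrGNO2}: since $(\C_{ad})_G$ has FP-dimension $d$, $2d$ or $4d$ and is either modular or slightly degenerate, Lemma~\ref{general2^ndsldege} makes it pointed. Hence $(\C_G)_{ad}$ is pointed, $\C_G$ is nilpotent, and \cite[Corollary 5.3]{GN} gives $FPdim(X)^2\mid FPdim((\C_G)_{ad})$ for every simple $X$ of $\C_G$; as this divisor is square-free up to a factor of $4$, every simple is invertible and $\C_G$ is pointed.

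Your proposed bookkeeping does not supply this: knowing $\D$ is pointed says nothing directly about the nontrivial $G$-components of $\C_G$, and the universal grading of $\C$ does not by itself control them. Your alternative route also has a genuine gap: a graded extension of a group-theoretical fusion category need not be group-theoretical, so ``climbing the single step from $\E'$ to $\C$'' via the $\hat G$-extension is not a valid inference without further input.
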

\begin{proof}If $m\neq1$, there exists a non-degenerate pointed fusion subcategory $\A$ of FP-dimension $m$ as Proposition \ref{integralslightly2^nd}, so we can replace $\C$ by $\A'$, where $\A'$ is the centralizer of $\A$ in $\C$. Below we always assume $FPdim(\C_{pt})=2^{n-2}$.  By Theorem \ref{dimensionform} $FPdim(\C_{ad})=\left\{
 \begin{array}{ll}
4d, & \hbox{$\C'\cap\C_{ad}=Vec$,} \\
8d, & \hbox{$\C'\subseteq\C_{ad}$.}
\end{array}
\right.$. If $FPdim(\C_{ad})=4d$, then $(\C_{ad})_{pt}$ is a symmetric fusion category of FP-dimension $4$. If $(\C_{ad})_{pt}=\E= Rep(G)$ is Tannakian, then  $(\C_{ad})_G$ is a modular pointed fusion category of FP-dimension $d$; if $(\C_{ad})_{pt}$ is super-Tannakian, then it contains a maximal Tannakian subcategory $\E=Rep(G)$, so $(\C_{ad})_{G}$ is slightly degenerate pointed fusion category of FP-dimension $2d$ by Lemma \ref{general2^ndsldege}. In both cases, we have  $(\C_G)_{ad}\subseteq(\C_{ad})_G\subseteq\C_G$ by \cite[Proposition 4.30]{DrGNO2}. Therefore $\C_G$ is nilpotent then pointed by \cite[Corollary 5.3]{GN}, hence $\C$ is group-theoretical by
\cite[Theorem 7.2]{NNW}. The other case can be proved similarly.
\end{proof}
For an integral non-degenerate fusion category $\C$ of FP-dimension $2^nd$, where $d$ is an odd square-free integer, if $\C$ is group-theoretical, then it is nilpotent.  Indeed, by \cite[Corollary 4.14]{DrGNO1} there exists a symmetric fusion category $\E\subseteq\C$ such that $\E'$ is nilpotent, since $FPdim(\E)$ is a prime power of $2$ \cite[Proposition 4.56]{DrGNO2}, \cite[Theorem 6.10]{DrGNO1} shows that $\C$ contains a modular fusion category $\A$ of FP-dimension $d$, so by \cite[Theorem 3.13]{DrGNO2} $\C\cong\A\boxtimes\A'$ is nilpotent. Now we are ready to improve result of \cite{DLD}.

\begin{prop}\label{integralmodular2^nd}
Integral non-degenerate fusion category $\C$ of FP-dimension $2^nd$ are nilpotent, if $n\leq6$.
\end{prop}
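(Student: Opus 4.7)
The plan is to reduce the proposition to the implication established just above it: for an integral non-degenerate $\C$ of FP-dimension $2^nd$ with $d$ odd square-free, group-theoreticity already implies nilpotency. So it suffices to prove that every such $\C$ with $n\le 6$ is automatically group-theoretical, and then invoke the paragraph preceding the statement.

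I would induct on $n$. Since $\C$ is integral modular, $FPdim(X)^2$ divides $2^nd$ for every simple $X$, and because $d$ is odd and square-free this forces $FPdim(X)\in\{1,2,4,8\}$ throughout the range $n\le 6$. The base cases $n=0,1$ force $FPdim(X)=1$, so $\C$ is pointed. In the induction step I would exploit the modular identity $(\C_{pt})'=\C_{ad}$ together with $FPdim(\C_{pt})FPdim(\C_{ad})=2^nd$ to locate a non-trivial Tannakian subcategory $\E=Rep(G)$ of $2$-power order inside $\C_{pt}$, mimicking the balancing-equation and $S$-matrix orthogonality tactic used in the proof of Proposition \ref{integralslightly2^nd}: an invertible $g\in\C_{pt}$ with $\theta_g=1$ generates a copy of $Rep(\Z_2)$, while the configurations in which no such $g$ exists can be ruled out from the rank and simple-object-dimension constraints forced by $n\le6$.

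Once $\E$ is in hand, de-equivariantization produces an integral modular category $(\E')_G$ of FP-dimension $2^{n-2k}d$ with $k=\log_2|G|\ge1$, to which the induction hypothesis applies and yields group-theoreticity of $(\E')_G$. Reassembling $\C$ as a $G$-equivariantization of the braided $G$-crossed extension $\C_G$ whose trivial component is $(\E')_G$, and using \cite[Theorem 7.2]{NNW}, then gives group-theoreticity of $\C$ itself. Invoking the paragraph above, one finally decomposes $\C\cong\A\boxtimes\A'$ with $\A$ pointed modular of FP-dimension $d$ and $\A'$ modular of prime-power FP-dimension $2^n$; both factors are nilpotent, so $\C$ is nilpotent.

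The main obstacle will be the Tannakian-extraction step. When $\C_{pt}$ is very small, or is purely super-Tannakian with no $2$-power-order Tannakian part beyond $Vec$, one has to work harder to produce an honest $Rep(G)$; the hypothesis $n\le6$ is precisely what keeps the possible pairs $(FPdim(\C_{pt}),FPdim(\C_{ad}))$ and the multiplicity vectors of simple-object FP-dimensions few enough for a direct case-by-case exclusion. Pushing beyond $n=7$ would apparently demand new structural input, such as a sharper version of \cite[Corollary 4.14]{DrGNO1} or a finer analysis of the universal grading of $\C$.
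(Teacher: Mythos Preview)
Your inductive scheme has a genuine gap at the reassembly step. Knowing that the trivial component $(\E')_G=(\C_G)^0$ of the de-equivariantization is group-theoretical---or even pointed---does not let you invoke \cite[Theorem 7.2]{NNW}: that theorem requires the \emph{entire} de-equivariantization $\C_G$ to be pointed. A $G$-crossed extension of a pointed category can perfectly well carry non-invertible simples in its nontrivial components, and $\C_G$ is only $G$-crossed braided, not braided, so the nilpotency/structure results you cite for braided categories do not transfer. Thus the induction does not close as written.

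The paper's proof avoids this by not running a plain induction on $n$. For $n\le5$ it splits on $FPdim(\C_{pt})\in\{2^{n-2},2^{n-3}\}$ and in each case takes the Tannakian $\E$ inside $(\C_{ad})_{pt}$, so that $(\C_{ad})_G$ itself is pointed; then $(\C_G)_{ad}\subseteq(\C_{ad})_G$ by \cite[Proposition 4.30]{DrGNO2}, forcing $\C_G$ to be nilpotent and hence pointed via \cite[Corollary 5.3]{GN}, at which point \cite[Theorem 7.2]{NNW} legitimately applies. For $n=6$ the residual configuration $\C_{pt}=(\C_{ad})_{pt}\cong Rep(\Z_2\times\Z_2)$ requires a different idea altogether: one de-equivariantizes along each of the three order-$2$ Tannakian subcategories $\E_i$, uses \cite[Lemma 4.11]{DongNa} at FP-dimension $16d$ to see that the resulting cores are pointed, deduces that every simple of $\E_i'$ has FP-dimension at most $2$, shows $\Q(\C)=\bigcup_i\Q(\E_i')$, and concludes group-theoreticity through \cite[Theorem 1.2]{Na2}. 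So your acknowledged ``main obstacle'' is real, but the deeper issue is not the extraction of $\E$---it is controlling the full $\C_G$, not just its identity component, once $\E$ is in hand.
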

\begin{proof}
Assume $\C$ is not pointed, otherwise there is nothing  to prove. Since $\C$ is integral, FP-dimensions of simple objects of $\C$ are powers of $2$, as in Proposition \ref{integralslightly2^nd}, we can assume $4|FPdim(\C_{pt})|2^{n-2}$ and $4|FPdim((\C_{ad})_{pt})$, so $n\geq4$. Indeed, we  show if $\C$ is not nilpotent then $4|FPdim(\C_{pt})|2^{n-4}$.

If $FPdim(\C_{pt})=2^{n-2}$,  as in Corollary \ref{slightly2^nd}, $(\C_{ad})_{pt}$ is symmetric, then we can show  $\C$ is group-theoretical, hence $\C$ is nilpotent. If $FPdim(\C_{pt})=2^{n-3}$ and $FPdim(\C_{ad})=2^3d$, consider $FPdim((\C_{ad})_{pt})$: if $FPdim((\C_{ad})_{pt})=8$, then $\C$ is nilpotent as in Corollary \ref{slightly2^nd}; if $FPdim((\C_{ad})_{pt})=4$, we show $(\C_{ad})_{pt}$  is Tannakian. We have equation
\begin{align*}
8d=FPdim(\C_{ad})=4+2^2 a_1+2^4 a_2+\cdots 2^{2t}a_t,
\end{align*}
where $a_j$ is the number of isomorphism classes of simple objects of FP-dimension $2^j$, $1\leq j\leq t$, therefore $a_1$ is odd. Let $G(\C_{ad})$ act on subset $A$, where $A:={\{X\in\Q(\C_{ad})| FPdim(X)=2}\}$, then there exists a fixed simple object $X$, which implies that
$X\otimes X^*=\oplus_{ g\in G(\C_{ad})}g$, balancing equation \cite[Proposition 8.13.8]{EGNO} means $(\C_{ad})_{pt}$ is Tannakian. Then $\C$ is group-theoretical as  in Corollary \ref{slightly2^nd}.

Hence for $n\leq5$, $\C$ is  nilpotent. If $n=6$, then we can assume that $\C_{pt}=(\C_{ad})_{pt}=Rep(\Z_2\times\Z_2)$ is Tannakian. Let $G(\C)=\langle g\rangle\times\langle h\rangle$, and consider  Tannakian subcategories $\E_i$ generated by $g$, $h$ and $gh$ respectively, $1\leq i\leq3$, then their M\"{u}ger centers are group-theoretical, since they are equivalent to $(\C^0_{\Z_2})^{\text{rev}}$ by \cite[Proposition 4.56]{DrGNO2}, which  is nilpotent then pointed  modular fusion category of FP-dimension $16d$ by \cite[Lemma 4.11]{DongNa}, therefore simple objects of these M\"{u}ger centers $\E'_i$ have FP-dimension $1$ or $2$ by \cite[Proposition 4.26]{DrGNO2}. While this implies $FPdim(X)\leq2$, $\forall X\in\Q(\C)$, since $\E_i\cap \E_j=\C_{ad}$ for $1\leq i\neq j\leq 3$, by computing FP-dimension of $\C$, we obtain that $\Q(\C)=\cup_{i=1}^3\Q(\E_i)$, then $\C$ is group-theoretical and nilpotent by \cite[Theorem 1.2]{Na2}.
\end{proof}

\begin{remk} It is interesting to see whether Proposition \ref{integralmodular2^nd} is true for all integers $n$, we leave it for future consideration.
\end{remk}

\begin{lemm}\label{general2p^nd4p^nd}Let $\C$ be a slightly degenerate fusion category with $FPdim(\C)=2p^nd$ or $4p^nd$, where $p$ is an odd prime and $d$ is an odd square-free integer, $(p,d)=1$. If $\C$ is not pointed, then $p^{n-1}\nmid FPdim(\C_{pt})$ and $(\C_{ad})_{pt}$ contains a Tannakian subcategory $\E$ of FP-dimension $p^j$, $j\geq2$.
\end{lemm}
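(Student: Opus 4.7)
The plan is to first reduce to the integral setting, handle (a) by contradiction paralleling Proposition \ref{integralslightly2^nd}, and derive (b) from (a) via a residue computation modulo $p^2$. Since $FPdim(\C)\in\{2p^nd,4p^nd\}$ with $p,d$ odd, we have $8\nmid FPdim(\C)$, so Theorem \ref{slightstrictweakly} gives that $\C$ is integral. Corollary \ref{sqdimension} then shows every simple $X\in\C$ satisfies $dim(X)^2\mid FPdim(\C)/2$; since $d$ is odd square-free and coprime to $p$, this forces $dim(X)=p^{a}$ for some $a\geq 0$.

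For (a), I would argue by contradiction and assume $p^{n-1}\mid FPdim(\C_{pt})$. As in the reduction used in Proposition \ref{integralslightly2^nd}, any non-degenerate pointed subcategory of $\C$ of odd prime dimension $q\mid d$ with $q\neq p$ can be split off via Theorem \ref{dimensionform} (writing $\C\cong\A\boxtimes\A'$), so I may induct on the size of $d$ and reduce to the case where $FPdim(\C_{pt})\in\{2^{\epsilon}p^{n-1},2^{\epsilon}p^{n}\}$, with $2^{\epsilon}$ the $2$-part of $FPdim(\C)$. In each such subcase, Proposition \ref{admugercenter} together with Theorem \ref{dimensionform} pins down $FPdim(\C_{ad})$ to one of two values, depending on whether $\C'\cap\C_{ad}$ is trivial or equals $\C'$; the resulting dimension is of the form $p^{\delta}e$ or $2p^{\delta}e$ with $\delta\leq 1$ and $e\mid d$, so that the constraint $dim(Y)^2\mid FPdim(\C)/2$ inherited from Corollary \ref{sqdimension} forces every simple $Y\in\C_{ad}$ to be invertible, i.e., $\C_{ad}$ is pointed. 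Then Proposition \ref{slighpointed} and the containment $\C_{ad}\subseteq\C_{pt}$ together force $d=1$, and a short induction on $n$ in the residual case makes $\C$ pointed, contradicting the standing hypothesis.

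For (b), conclusion (a) gives $p^{n-1}\nmid FPdim(\C_{pt})$ and hence $p^2\mid FPdim(\C_{ad})$ via the same dimension formula. Each non-invertible simple $Y\in\C_{ad}$ has $dim(Y)=p^{a_Y}$ with $a_Y\geq 1$, so
\begin{align*}
FPdim(\C_{ad})-FPdim((\C_{ad})_{pt})=\sum_{Y\text{ non-invertible}}p^{2a_Y}\equiv 0\pmod{p^2},
\end{align*}
which yields $p^2\mid FPdim((\C_{ad})_{pt})$. By Proposition \ref{admugercenter} we have $(\C_{ad})'=\C_{pt}$, hence the M\"uger center of $\C_{ad}$ is $\C_{ad}\cap\C_{pt}=(\C_{ad})_{pt}$, which is therefore symmetric and pointed. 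Its $p$-Sylow fusion subcategory is then a symmetric pointed fusion category of dimension $p^{j}$ with $j\geq 2$; since $p$ is odd it admits no non-trivial element of order $2$, so this $p$-Sylow is Tannakian, furnishing the required $\E$.

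The main obstacle I anticipate is managing the several parallel subcases in (a) cleanly, each with its own dimensional check; none of them introduces an idea beyond Proposition \ref{integralslightly2^nd}, but the bookkeeping constitutes the bulk of the work. Part (b) then follows rapidly once the mod-$p^2$ computation above is in hand.
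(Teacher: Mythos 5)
Your overall frame (integrality via Theorem \ref{slightstrictweakly}, $p$-power dimensions of simples via Corollary \ref{sqdimension}, and for part (b) the mod-$p^2$ count inside $\C_{ad}$ together with Proposition \ref{admugercenter} and the fact that an odd-dimensional pointed symmetric category is Tannakian) agrees with the paper, and your part (b) is fine once (a) is in place. The genuine gap is in part (a), at the step ``the constraint $dim(Y)^2\mid FPdim(\C)/2$ inherited from Corollary \ref{sqdimension} forces every simple $Y\in\C_{ad}$ to be invertible.'' Corollary \ref{sqdimension} only compares $FPdim(Y)^2$ with $FPdim(\C)/2=p^nd$ (or $2p^nd$), which is still divisible by $p^n$; it says nothing about divisibility into $FPdim(\C_{ad})$. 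A statement of the form $FPdim(Y)^2\mid FPdim(\C_{ad})$ for $Y\in\Q(\C)$ is exactly the nilpotency-type conclusion one is working towards (\cite[Corollary 5.3]{GN}) and is not available here, and you cannot apply Corollary \ref{sqdimension} to $\C_{ad}$ itself, since the M\"{u}ger center of $\C_{ad}$ is $(\C_{ad})_{pt}$, not $sVec$. So knowing that the $p$-part of $FPdim(\C_{ad})$ is at most $p$ (e.g. $FPdim(\C_{ad})=2ps$ with $s\mid d$ possibly large) does not exclude non-invertible simples of dimension $p$ in $\C_{ad}$, and your intended contradiction never materializes; consequently the subsequent appeal to Proposition \ref{slighpointed} and the claim $d=1$ have nothing to rest on.

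The missing idea is the counting argument over the universal grading that the paper uses: assuming $p^{n-1}\mid FPdim(\C_{pt})$, every component $\C_g$, $g\in U_\C$, has dimension $FPdim(\C_{ad})\in\{ps,2ps\}$ with $s\mid d$, and writing $FPdim(\C_g)=a_g+p^2b_g$ (invertibles plus simples of $p$-power dimension $\geq p$) forces $a_g\neq0$ (otherwise $p\mid s\mid d$) and then $p\mid a_g$; summing over the $|U_\C|$ components gives $FPdim(\C_{pt})=\sum_g a_g\geq p|U_\C|>FPdim(\C_{pt})$ because $p>2$, the desired contradiction, after which $p^2\mid FPdim(\C_{ad})$ and your part (b) takes over. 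A secondary caveat: your preliminary reduction that splits off every odd prime $q\mid d$, $q\neq p$, from $\C_{pt}$ assumes the corresponding $q$-dimensional pointed subcategory is non-degenerate, whereas a priori it could be Tannakian; in any case the paper's proof of this lemma needs no such reduction, since it works directly with $FPdim(\C_{pt})=2p^{n-1}m$, $m\mid d$.
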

\begin{proof} We only show the case $FPdim(\C)=2p^nd$, the other is same. By Proposition \ref{striweakintchar} $\C$ is integral; since $\C$ is not pointed, non-invertible simple objects of $\C$ have FP-dimensions of $p^j$ by Corollary \ref{sqdimension} ($j\geq1$), therefore $p|FPdim((\C_{ad})_{pt})$ and  $p^n\nmid FPdim(\C_{pt})$ by Theorem \ref{dimensionform}.

If $p^{n-1}\mid FPdim(\C_{pt})$, assume $FPdim(\C_{pt})=2p^{n-1}m$, $d=ms$,  by Theorem \ref{dimensionform},
 $FPdim(\C_{ad})=\left\{
 \begin{array}{ll}
ps, & \hbox{$\C'\cap\C_{ad}=Vec$,} \\
2ps, & \hbox{$\C'\subseteq\C_{ad}$.}
\end{array}
\right.$ and  $|U_\C|=\left\{
 \begin{array}{ll}
2p^{n-1}m, & \hbox{$\C'\cap\C_{ad}=Vec$,} \\
p^{n-1}m, & \hbox{$\C'\subseteq\C_{ad}$.}
\end{array}
\right.$. Since $FPdim(\C_{ad})=FPdim(\C_g)=a_g+p^2b_g$ $(\forall g\in U_\C)$, where $a_g$ is the number of isomorphism classes of invertible objects contained in component $\C_g$, $a_g\neq0$, otherwise $p\mid d$, so $p|a_g$ $(\forall g\in U_\C)$. However then $FPdim(\C_{pt})\geq p|U_\C|>FPdim(\C_{pt})$ as $p$ is an odd prime, this is a contradiction, hence $p^{n-1}\nmid FPdim(\C_{pt})$ and $p^2\mid FPdim(\C_{ad})$. By calculating FP-dimension of $\C_{ad}$, we obtain $p^2|FPdim((\C_{ad})_{pt})$; by Proposition \ref{admugercenter} $(\C_{ad})_{pt}$ is symmetric, then $(\C_{ad})_{pt}$ contains a  Tannakian subcategory $\E$ of FP-dimension $p^j$ by \cite[Corollary 2.50]{DrGNO2} with $j\geq2$.
\end{proof}
\begin{theo}\label{2p^n4p^n}Slightly degenerate fusion categories of FP-dimension $2p^n$ and $4p^n$ are nilpotent where $p$ is an odd prime and $n$ is a positive integer.
\end{theo}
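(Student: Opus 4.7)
The plan is to argue by strong induction on $n$, treating $FPdim(\C)=2p^n$ and $FPdim(\C)=4p^n$ in parallel. Integrality of $\C$ is automatic: Proposition \ref{striweakintchar} gives it in the $2p^n$ case (since $4\nmid FPdim(\C)$), and Theorem \ref{slightstrictweakly} gives it in the $4p^n$ case (since $8\nmid FPdim(\C)$). Corollary \ref{sqdimension} then forces every non-invertible simple object of $\C$ to have FP-dimension a positive power of $p$. For the base case $n\leq 1$, Lemma \ref{general2p^nd4p^nd} (applied with $d=1$) makes $\C$ pointed: a non-pointed $\C$ would require a Tannakian subcategory of FP-dimension $p^j$ with $j\geq 2$, but $p^2\nmid 2p$ and $p^2\nmid 4p$. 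Pointed categories are nilpotent.

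For the inductive step $n\geq 2$ with $\C$ non-pointed, Lemma \ref{general2p^nd4p^nd} supplies a pointed Tannakian $\E=Rep(G)\subseteq(\C_{ad})_{pt}$ with $|G|=p^j$, $j\geq 2$. By Remark \ref{Tannmugdeequiv} and Theorem \ref{dimensionform}, the core $\D:=\E'_G$ is again slightly degenerate of FP-dimension $FPdim(\C)/p^{2j}\in\{2p^{n-2j},4p^{n-2j}\}$, strictly smaller in the exponent $n$, so the inductive hypothesis makes $\D$ nilpotent.

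To transfer nilpotency back to $\C$, observe that $\E\subseteq(\C_{ad})_{pt}\subseteq\C_{pt}=(\C_{ad})'$ (Proposition \ref{admugercenter}), so $\C_{ad}\subseteq\E'=\D^G$; it therefore suffices to show $\E'$ is nilpotent. The idea is to strengthen the inductive hypothesis, using Proposition \ref{slighpointed} at the base, to the statement that $\D\cong\D_0\boxtimes sVec$ with $\D_0$ modular integral of FP-dimension $p^{n-2j}$ or $2p^{n-2j}$. Since $sVec$ has only the trivial braided autoequivalence, and any pointed $\mathbb{Z}_2$-part of $\D_0$ has FP-dimension coprime to $|G|$, the $p$-group $G$ acts trivially on both the $sVec$ factor and any $2$-part, so $\E'$ factors as a Deligne product whose non-pointed piece is the $G$-equivariantization of a modular $p$-power category. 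Being of prime-power FP-dimension $p^{n-j}$, this equivariantization is nilpotent by \cite[Theorem 8.28]{ENO1}; hence $\E'$, the fusion subcategory $\C_{ad}\subseteq\E'$, and finally $\C$ itself are all nilpotent. The main obstacle is establishing and maintaining the decomposition $\D\cong\D_0\boxtimes sVec$ inductively, isolating the $sVec$ Müger-center factor as a Deligne summand; this requires the integrality of $\D$, the coprimality of $2$ and $p$ (to split the bicharacter on the maximal pointed subcategory), and the rigidity of $sVec$ under braided autoequivalences, with special care in the $4p^n$ case where the $\mathbb{Z}_2$-part of $\D_0$ must itself carry a quadratic form of order $4$ in order for $\D_0$ to remain modular.
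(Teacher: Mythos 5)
Your proposal is correct in substance and runs on the same engine as the paper: integrality, Lemma \ref{general2p^nd4p^nd} to extract a Tannakian $\E=Rep(G)\subseteq(\C_{ad})_{pt}$ with $|G|=p^j$, $j\geq2$, de-equivariantization plus induction on the exponent, splitting off the $sVec$ factor, and then equivariantizing back using that an odd $p$-group acts trivially on the $2$-part and that the $p$-part of the equivariantization has prime-power FP-dimension, hence is nilpotent by \cite[Theorem 8.28]{ENO1}. The differences are in the transfer step. The paper de-equivariantizes $\C_{ad}$ itself (not $\E'$), obtains $(\C_{ad})_G\cong sVec\boxtimes\D_p$ from nilpotency via \cite[Theorem 6.12]{DrGNO1}, and recovers $\C_{ad}=(sVec)^G\vee(\D_p)^G$ as a \emph{join} inside the equivariantization, concluding with \cite[Lemma 2.2]{DongNa}; it also splits the $4p^n$ case into subcases on $FPdim(\C_{pt})$ and invokes \cite[Theorem 4.7]{DongNa} when the de-equivariantized category is modular of dimension $2p^{n-m}$. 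You instead de-equivariantize the full centralizer $\E'$, so your core $\D=\E'_G$ is always slightly degenerate of dimension $2p^{n-2j}$ or $4p^{n-2j}$ by Remark \ref{Tannmugdeequiv} and Corollary \ref{Tannakiandimen}; this makes the induction uniform, avoids the case split and the appeal to \cite[Theorem 4.7]{DongNa}, and you finish via $\E'\cong(\text{$2$-part})\boxtimes(\B_p)^G$ together with the (easy) fact that fusion subcategories of nilpotent categories are nilpotent, applied to $\C_{ad}\subseteq\E'$. Two remarks. First, your ``strengthened induction hypothesis'' $\D\cong\D_0\boxtimes sVec$ need not be carried along: once $\D$ is nilpotent and slightly degenerate, \cite[Theorem 6.12]{DrGNO1} decomposes it into prime components, the odd component is modular, and the $2$-component (of dimension $2$ or $4$, with M\"uger center $sVec$) is pointed and splits by Proposition \ref{slighpointed}; no separate bicharacter argument is needed. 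Second, your Deligne-product claim for $\E'=\D^G$ requires the $G$-action to be trivializable on the $2$-part including its coherence data, not merely trivial on isomorphism classes of autoequivalences; this holds because the relevant automorphism and obstruction groups are $2$-groups while $|G|=p^j$ is odd (the same point the paper makes when asserting $G$ acts trivially on $sVec$, and handles more carefully in Corollary \ref{slightly2p^nd4p^nd} via \cite[Proposition A.6]{DrGNO2} and vanishing cohomology), so it is not a gap but should be said explicitly.
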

\begin{proof}Let $\C$ be a slightly degenerate fusion category of FP-dimension $2p^n$. If $\C'\cap\C_{ad}=Vec$, then it follows from Proposition \ref{admugercenter} and Theorem \ref{dimensionform} that $FPdim(\C_{ad})=p^t$, $\C$ is nilpotent by \cite[Theorem 8.28]{ENO1}. Assume $\C'\subseteq\C_{ad}$ and $\C$ is not pointed, then $FPdim(\C_{pt})=2p^m$ and $FPdim(\C_{ad})=2p^{n-m}$ by Theorem \ref{dimensionform}, and $2\leq m\leq n-2$, $(\C_{ad})_{pt}$ contains a maximal Tannakian subcategory $Rep(G)$ and $|G|=p^j$, $j\geq2$ by Lemma \ref{general2p^nd4p^nd}. Then $(\C_{ad})_G$ is a slightly degenerate fusion category of FP-dimension $2p^{n-m-j}$ by Remark \ref{Tannmugdeequiv}, by induction $(\C_{ad})_G$ is nilpotent. Hence $(\C_{ad})_G\cong sVec\boxtimes\D$, where $\D$ is a modular fusion category of FP-dimension $p^{n-m-j}$, and $\C_{ad}\cong sVec^G\vee\D^G$. Since $G$ acts on $sVec$ trivially, and $FPdim(\D^G)=p^{n-m}$,   \cite[Theorem 8.28]{ENO1} and \cite[Lemma 2.2]{DongNa} together imply  that $\C_{ad}$ is nilpotent, then $\C$ is nilpotent.

Let $FPdim(\C)=4p^n$ and assume $\C$ is not pointed. Then $FPdim(\C_{pt})=2^kp^m$, with $k=1,2$ and $2\leq m\leq n-2$ by Lemma \ref{general2p^nd4p^nd}. If $k=2$, by Theorem  \ref{dimensionform} $FPdim(\C_{ad})=\left\{
  \begin{array}{ll}
 p^{n-m}, & \hbox{$\C'\cap\C_{ad}=Vec$,} \\
 2p^{n-m}, & \hbox{$\C'\cap\C_{ad}=sVec$.}
\end{array}
 \right.$, then $\C_{ad}$ is nilpotent as the case $FPdim(\C)=2p^n$; If $k=1$,
then $FPdim(\C_{ad})=\left\{
  \begin{array}{ll}
 2p^{n-m}, & \hbox{$\C'\cap\C_{ad}=Vec$,} \\
 4p^{n-m}, & \hbox{$\C'\cap\C_{ad}=sVec$.}
\end{array}
 \right.$.  Then $(\C_{ad})_G$ is braided and $((\C_{ad})_G)'\subseteq sVec$, where $Rep(G)$ is a maximal Tannakian subcategory of $(\C_{ad})_{pt}$, $(\C_{ad})_G$ is nilpotent by induction if $4|FPdim(\C_{ad})$ and by \cite[Theorem 4.7]{DongNa} if $4\nmid FPdim(\C_{ad})$, then $\C_{ad}$ is nilpotent by similar argument.
\end{proof}
The following corollary is an analog  of \cite[Theorem 4.7]{DongNa}.
\begin{coro}\label{slightly2p^nd4p^nd}Slightly degenerate fusion categories  of FP-dimension $2p^nd$  and $4p^nd$ are nilpotent, where p is an odd prime and d is an odd square-free integer such that $(p,d)=1$, $n$ is a non-negative integer. In particular, they are  group-theoretical.
\end{coro}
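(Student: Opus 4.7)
My plan is to induct on $FPdim(\C)$, keeping the dimension of the form $2p^{n'}d'$ or $4p^{n'}d'$ at every stage. Theorem \ref{2p^n4p^n} handles the base case $d=1$, while Lemma \ref{general2^ndsldege} covers $n=0$: there $FPdim(\C) = 2d$ or $4d$, so $\C$ is automatically pointed and hence nilpotent by Proposition \ref{slighpointed}.

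For the inductive step, assume $n \geq 1$ and $d > 1$. If $\C$ is pointed, Proposition \ref{slighpointed} finishes it, so assume $\C$ is not pointed. Lemma \ref{general2p^nd4p^nd} then produces a Tannakian subcategory $\E = Rep(G)$ in $(\C_{ad})_{pt}$ with $|G| = p^j$, $j \geq 2$; the symmetric inclusion $\E \subseteq \E'$ combined with Theorem \ref{dimensionform} forces $p^{2j} \mid FPdim(\C)$, so $n \geq 2j \geq 4$. I would then form the slightly degenerate fusion category $\E'_G$ supplied by Remark \ref{Tannmugdeequiv}: by Theorem \ref{dimensionform} its FP-dimension is $FPdim(\E')/|G| = FPdim(\C)/|G|^2 = 2p^{n-2j}d$ or $4p^{n-2j}d$, strictly smaller than $FPdim(\C)$. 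The inductive hypothesis then gives that $\E'_G$ is nilpotent.

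To transport nilpotency back to $\C$, I mimic the argument at the end of the proof of Theorem \ref{2p^n4p^n}. The nilpotent slightly degenerate $\E'_G$ splits as $sVec \boxtimes \D$ for some modular nilpotent $\D$, and $\E' \cong (\E'_G)^G$ is the join $sVec^G \vee \D^G$. Since $|G| = p^j$ is odd, any $G$-action on $sVec$ is trivial, so $sVec^G \cong sVec \boxtimes Rep(G)$ is pointed and hence nilpotent; and $\D^G$ is nilpotent because $G$ is a nilpotent group (being a $p$-group) acting on the nilpotent modular category $\D$. Thus $\E'$ is nilpotent, and $\C_{ad} \subseteq \E'$ inherits nilpotency as a fusion subcategory of a nilpotent category, so $\C$ itself is nilpotent.

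For the ``in particular, group-theoretical'' conclusion, I would proceed by a parallel induction. By induction $\E'_G$ is Morita equivalent to a pointed fusion category, so \cite[Theorem 7.2]{NNW} applied to the $p$-group equivariantization $(\E'_G)^G = \E'$ shows $\E'$ is group-theoretical; transferring this property to $\C_{ad} \subseteq \E'$ and then up to $\C$ via its universal grading (whose trivial component is $\C_{ad}$) yields the statement. The main obstacle I anticipate is precisely this last passage: nilpotency does not automatically imply group-theoretical, and one must verify that at every recursive stage the Tannakian subcategory from Lemma \ref{general2p^nd4p^nd} sits inside the pointed part $(\C_{ad})_{pt}$ in a way compatible with the universal grading, so that the criterion of \cite{NNW} applies uniformly along the chain of equivariantizations.
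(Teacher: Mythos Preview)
Your inductive framework via $\E'_G$ is sensible and close in spirit to the paper's proof, but the step where you assert ``$\D^G$ is nilpotent because $G$ is a nilpotent group (being a $p$-group) acting on the nilpotent modular category $\D$'' is a genuine gap. That principle is false in general: take $G=\Z_2$ acting by inversion on the pointed modular category $\D=Vec_{\Z_3}^q$; both are nilpotent, yet $\D^G$ has the fusion rules of $Rep(S_3)$, with $(\D^G)_{ad}=\D^G$, so it is not nilpotent. The reason Theorem~\ref{2p^n4p^n} gets away with this is that there $d=1$, so $\D$ has FP-dimension a power of $p$, hence $\D^G$ does too and \cite[Theorem 8.28]{ENO1} applies directly. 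In your situation $\D$ has FP-dimension $p^{n-2j}d$ (or $2p^{n-2j}d$), which is not a prime power, so that shortcut is unavailable.

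The paper fills exactly this gap, and does so by reducing to $d=1$ rather than by proving $\D^G$ nilpotent head-on. First it strips off any modular pointed subcategory of prime order $q\mid d$ sitting in $\C_{pt}$, so that one may assume $FPdim(\C_{pt})=2p^m$. Then, working with $(\C_{ad})_G$ instead of $\E'_G$, it decomposes the nilpotent de-equivariantization as $sVec\boxtimes\D_{q_1}\boxtimes\cdots\boxtimes\D_{q_r}\boxtimes\D_p$ and observes that the odd $p$-group $G$ must act trivially on each $\D_{q_i}$ (the braided automorphism group of a pointed modular category of odd prime order is $\{\pm1\}$, and the relevant cohomology vanishes since $(|G|,q_i)=1$). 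This forces $(\D_{q_i})^G\cong\D_{q_i}\boxtimes Rep(G)$, so $q_i\mid FPdim(\C_{pt})=2p^m$, a contradiction; hence $d=1$ and Theorem~\ref{2p^n4p^n} applies. Your approach can be repaired by inserting this same triviality-of-action analysis, but it is not a consequence of ``$G$ nilpotent and $\D$ nilpotent.''

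For the group-theoretical conclusion, your route through \cite{NNW} and the universal grading is unnecessarily delicate (passing group-theoreticality up through a grading is not automatic). The paper's argument is immediate once nilpotency is in hand: a braided nilpotent category is a Deligne product of prime-power pieces by \cite[Theorem 6.12]{DrGNO1}, and each integral prime-power modular factor is group-theoretical by \cite[Theorem 6.10]{DrGNO1}.
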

\begin{proof}Let $\C$ be a slightly degenerate fusion categories  of FP-dimension $2p^nd$ with $d>1$($FPdim(\C)=4p^nd$ is similar), then $\C$ is integral by Theorem \ref{slightstrictweakly}. When $n\leq3$, it follows from Lemma \ref{general2p^nd4p^nd} that $\C$ is pointed, definitely $\C$ is nilpotent.

There is nothing to prove if  $\C$ is pointed.
Below we assume $\C$ is not pointed, and necessarily $n\geq4$. As in Theorem \ref{slightstrictweakly}, if there exists an odd prime $q\neq p$ and $q|FPdim(\C_{pt})$,  then $\C_{pt}$ contains  a modular fusion category $\B$ of FP-dimension $q$, $\C\cong\B\boxtimes \B'$ by \cite[Theorem 3.13]{DrGNO2} and we can replace $\C$ with $\B'$. Therefore, by Theorem \ref{dimensionform} we further assume $FPdim(\C_{pt})=2p^m$, $FPdim(\C_{ad})=\left\{
 \begin{array}{ll}
p^{n-m}d, & \hbox{$\C'\cap\C_{ad}=Vec$,} \\
2p^{n-m}d, & \hbox{$\C'\subseteq\C_{ad}$.}
\end{array}
\right.$, $n-2\geq m\geq2$. We proceed by induction on $n$.

Let $\E=Rep(G)$ be a maximal Tannakian subcategory of $(\C_{ad})_{pt}$, $|G|=p^j$ with $j\geq2$ by Lemma \ref{general2p^nd4p^nd}, then $(\C_{ad})_G$ is slightly degenerate of FP-dimension $2p^{n-m-j}d$ Remark \ref{Tannmugdeequiv}. Here we assume $\C'\subseteq\C_{ad}$, if not, then $(\C_{ad})_G$ is a modular fusion category of FP-dimension $p^{n-m}d$, then $(\C_{ad})_G$ is nilpotent by \cite[Theorem 4.7]{DongNa}. By induction, $\D:=(\C_{ad})_G$ is nilpotent, by \cite[Theorem 6.10]{DrGNO1} $\D\cong sVec\boxtimes\D_{q_1}\boxtimes\cdots\boxtimes\D_{q_r}\boxtimes \D_p$ as braided fusion categories, where $d=q_i\cdots q_r$, $\D_{q_i}$ is a modular fusion category of FP-dimension $q_i$, $\D_p$ is a modular pointed fusion category with $FPdim(\D_p)=p^{n-m-j}$, $q_i$ are odd primes, $1\leq i\leq r$. Then $\C_{ad}=\D^G=(sVec)^G\vee(\D_{q_1})^G\vee\cdots\vee(\D_{q_r})^G\vee(\D_p)^G$.

Note that $G$ acts on $sVec$ trivially and $(\D_p)^G$ is a fusion category of FP-dimension $p^{n-m}$, so they are nilpotent fusion subcategories. We show $(\D_{q_i})^G$ are nilpotent for all $1\leq i\leq r$, then by \cite[Lemma 2.2]{DongNa} $\C_{ad}$ is nilpotent, which means $\C$ is nilpotent. By
\cite[Proposition A.6]{DrGNO2}, the action of $G$  on a metric group of odd order $q_i$ while preserving the quadratic form can never be non-trivial. Moreover, $\forall q\geq0$,  since $(|G|,q_i)=1$, $H^q(G,\Z_{q_i})=0$ for all $1\leq i\leq r$.  Therefore $(\D_{q_i})^G\cong \D_{q_i}\boxtimes Rep(G)$, then $q_i|FPdim(\C_{pt})$, and consequently $d=1$ as $FPdim(\C_{pt})=2p^m$. Then by Theorem \ref{2p^n4p^n}, $\C$ is a nilpotent fusion category.

Nilpotency of $\C$  means that $\C$ is  equivalent to Deligne tensor products of braided fusion categories of prime power FP-dimensions  \cite[Theorem 6.12]{DrGNO1}. Then $\C$ is group-theoretical by \cite[Theorem 6.10]{DrGNO1}.
\end{proof}

Therefore, like classification of modular fusion category of FP-dimension $2p^nd$
\cite[Theorem 4.7]{DongNa}, the classification of slightly degenerate fusion category $\C$ of FP-dimension $2p^nd$ or $4p^nd$ is reduced to classification of modular fusion categories of prime power FP-dimensions. Particularly, if $n\leq4$, then $\C$ is pointed by \cite[Lemma 4.11]{DongNa}.

\begin{coro}Integral braided fusion categories of FP-dimension $p^nd$ are group-theoretical if $n\leq4$, where $p$ is a prime, $d$ is a square-free integer and $(p,d)=1$.
\end{coro}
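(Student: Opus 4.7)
The plan is to induct on $FPdim(\C)$, using de-equivariantization by a maximal Tannakian subcategory of the M\"uger center $\C'$ to reduce to base cases already handled in this paper.

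By Deligne's theorem \cite[Corollaire 0.8]{De} the symmetric category $\C'$ is super-Tannakian, $\C' \cong Rep(G,u)$; let $\E = Rep(H) \subseteq \C'$ be a maximal Tannakian subcategory, so $FPdim(\E)$ equals $FPdim(\C')$ or $FPdim(\C')/2$. Suppose first that $|H| > 1$, and set $\D := \C_H$. Then $\D$ is integral (by \cite[Corollary 4.27]{DrGNO2}) and braided, and by \cite[Proposition 4.30]{DrGNO2} together with Remark \ref{Tannmugdeequiv} its M\"uger center $\D' \cong (\C')_H$ has FP-dimension at most $2$, so $\D$ is either non-degenerate or slightly degenerate. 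Since $|H|$ divides $p^n d$ and $d$ is square-free, write $|H| = p^a e$ with $a \leq n$ and $e \mid d$; then $FPdim(\D) = p^{n-a}d/e$ is again of the form $p^{n'}d'$ with $n' \leq 4$, $(p, d') = 1$ and $d'$ square-free. By the inductive hypothesis $\D$ is group-theoretical, whence $\C \cong \D^H$ is group-theoretical by \cite[Theorem 7.2]{NNW}.

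It remains to treat the base case $|H| = 1$, where $\C' \subseteq sVec$. If $\C$ is slightly degenerate, then $2 \mid FPdim(\C)$: when $p = 2$, the claim follows from Lemma \ref{general2^ndsldege} together with Theorems \ref{slightly8d} and \ref{slightly16d}; when $p$ is odd, necessarily $2 \mid d$, and writing $d = 2d'$ we have $FPdim(\C) = 2p^n d'$, so Corollary \ref{slightly2p^nd4p^nd} applies. If instead $\C$ is modular, for $p = 2$ we invoke Proposition \ref{integralmodular2^nd}, while for $p$ odd with $2 \mid d$ we invoke \cite[Theorem 4.7]{DongNa}; in both situations $\C$ is nilpotent, hence group-theoretical via the prime-power decomposition of \cite[Theorem 6.10]{DrGNO1}. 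When $p$ and $d$ are both odd, $FPdim(\C)$ is odd and every symmetric subcategory of $\C$ is automatically Tannakian by Deligne's theorem, so we iterate the same de-equivariantization on non-trivial Tannakian subcategories of $\C_{pt}$ itself until no such subcategory remains, at which point the residual modular category has prime-power FP-dimension and is nilpotent by \cite[Theorem 8.28]{ENO1}.

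The main obstacle is precisely this odd modular base case, since the preceding classification results in the paper all assume a factor of $2$ in the dimension. Handling it requires the auxiliary iterative de-equivariantization just sketched, together with verification that the intermediate categories remain integral and braided with FP-dimensions of the form $p^{n'}d'$ satisfying the corollary's hypotheses, so that induction on $FPdim(\C)$ and the lift-through-equivariantization of \cite[Theorem 7.2]{NNW} can be applied throughout.
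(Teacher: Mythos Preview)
Your overall strategy coincides with the paper's: do a case analysis on $\C'$, handle the cases $\C'\subseteq sVec$ directly, and otherwise de-equivariantize by a maximal Tannakian $\E=Rep(H)\subseteq\C'$ to land in those base cases. The genuine gap is your treatment of the odd modular base case ($p$ and $d$ both odd). The paper does not argue this case at all; it simply cites \cite[Corollary 4.13]{DongNa}, which shows that such an integral modular $\C$ is already pointed. Your proposed substitute --- iterated de-equivariantization by Tannakian subcategories of $\C_{pt}$ --- does not work as written. When $\C$ is modular and $\E=Rep(G)\subseteq\C_{pt}$, the de-equivariantization $\C_G$ is only $G$-crossed braided, so the Corollary's inductive hypothesis (which requires a braided category) does not apply to $\C_G$; the braided piece is $\E'_G$, but even if $\E'_G$ is pointed this does not force $\C_G$ to be pointed (Tambara--Yamagami categories are non-pointed $\Z_2$-extensions of pointed ones), so you cannot invoke \cite[Theorem 7.2]{NNW} to conclude that $\C\cong(\C_G)^G$ is group-theoretical. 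Your claim that ``the residual modular category has prime-power FP-dimension'' is likewise unjustified.

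There is also a smaller mismatch in the inductive step. You deduce from the inductive hypothesis only that $\D=\C_H$ is group-theoretical and then cite \cite[Theorem 7.2]{NNW}; but that result, as used throughout the paper, lifts \emph{pointed} through equivariantization to group-theoretical. The fix is immediate once the odd modular gap is closed: since $\D'\subseteq sVec$, the base-case results (Proposition~\ref{integralslightly2^nd}, Corollary~\ref{slightly2p^nd4p^nd}, Proposition~\ref{integralmodular2^nd} and \cite[Corollary 4.13]{DongNa}) actually give that $\D$ is \emph{pointed}, and then \cite[Theorem 7.2]{NNW} applies --- which is exactly how the paper proceeds. Finally, your appeals to Theorems~\ref{slightly8d} and~\ref{slightly16d} are forward references; for integral $\C$ with $p=2$ the earlier Proposition~\ref{integralslightly2^nd} already suffices.
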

\begin{proof}Let $\C$ be a braided fusion category of  FP-dimension $p^nd$ with $n\leq4$. Consider the M\"{u}ger center $\C'$ of $\C$: if $\C'=Vec$, then $\C$ is modular and pointed by
\cite[Corollary 4.13]{DongNa}, and Proposition \ref{integralmodular2^nd}; if $\C'=sVec$, then $\C$ is pointed by Proposition \ref{integralslightly2^nd} and Corollary \ref{slightly2p^nd4p^nd}; if $\C'\cong Rep(G)$ is a non-trivial Tannakian category, then $\C\cong\D^G$ is group-theoretical by \cite[Theorem 7.2]{NNW}, since $\D$ is modular and by induction $\D$ is pointed; if $\C'=Rep(G,u)$, then it contains a maximal Tannakian subcategory $\E\cong Rep(N)$ and $|N|=\frac{1}{2}FPdim(\C')$, by Remark \ref{Tannmugdeequiv} $\C_N$ is a slightly degenerate fusion category, which is pointed by induction on FP-dimension of $\C_N$, then $\C$ is group-theoretical.
\end{proof}
The following proposition generalizes result of  \cite[Proposition 5.3]{Na2}.
\begin{prop}\label{simplep^mq^nsolv}Let $p,q$ be primes, and $\C$ be a weakly integral braided fusion category such that integral simple objects of $\C$ have FP-dimensions $p^mq^n$, where $m,n$ are nonnegative integers. Then $\C$ is weakly group-theoretical.
\end{prop}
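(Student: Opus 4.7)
The plan is to reduce the statement to the integral case, which is handled in \cite[Proposition 5.3]{Na2}, by exploiting the dimension grading that every weakly integral fusion category automatically carries. Concretely, I will show that the integral part $\C_{int}$ already satisfies the hypotheses of the integral analogue, conclude that $\C_{int}$ is weakly group-theoretical, and then propagate this up to $\C$ via the extension.

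First I would invoke \cite[Theorem 3.10]{GN}: since $\C$ is weakly integral, its dimension grading gives a faithful decomposition $\C=\bigoplus_{g\in E}\C_g$ by an elementary abelian $2$-group $E$, with trivial component $\C_e=\C_{int}$ equal to the maximal integral fusion subcategory of $\C$. In particular, $\C$ is realized as a faithful $E$-extension of $\C_{int}$.

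Next I would observe that $\C_{int}$ inherits a braiding as a fusion subcategory of the braided category $\C$, and that its simple objects are precisely the integral simple objects of $\C$. By hypothesis each of these has FP-dimension $p^{m}q^{n}$, so $\C_{int}$ is an integral braided fusion category whose simple objects all have FP-dimension of this form. Applying \cite[Proposition 5.3]{Na2} then yields that $\C_{int}$ is weakly group-theoretical. Finally, since the class of weakly group-theoretical fusion categories is closed under taking extensions by finite groups \cite[Proposition 4.1]{ENO2}, and $\C$ is an $E$-extension of the weakly group-theoretical category $\C_{int}$, it follows that $\C$ itself is weakly group-theoretical.

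The only real obstacle is verifying that \cite[Proposition 5.3]{Na2} is stated in precisely the integral form one needs, so that the hypothesis on integral simple objects of $\C$ translates exactly into its hypothesis on all simple objects of $\C_{int}$. Should that reference only cover a slightly narrower situation, a direct proof of the integral case would take its place, proceeding by induction on $FPdim(\C_{int})$: inspect the M\"uger center $(\C_{int})'$ and, via Theorem \ref{dimensionform}, peel off Tannakian or non-degenerate fusion subcategories of prime-power FP-dimension, invoking solvability of fusion categories of FP-dimension $p^{a}q^{b}$ from \cite{ENO2} to finish.
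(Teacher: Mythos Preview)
Your reduction to $\C_{int}$ via the dimension grading is valid and elegant, and it is a simplification the paper does not make: it immediately puts you in the integral setting. The difficulty is step two. The paper's own proof strongly indicates that \cite[Proposition 5.3]{Na2} is available only in the non-degenerate case (the paper invokes it precisely when $\D$ is non-degenerate and then spends the bulk of the argument on the slightly degenerate case). Since $\C_{int}$ has no reason to be non-degenerate, you cannot cite Natale's result directly.

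Your fallback does not close this gap. Inducting on $FPdim(\C_{int})$ by de-equivariantizing along a maximal Tannakian in the M\"uger center lands you in a category whose M\"uger center is contained in $sVec$; the non-degenerate branch is then handled by Natale, but the slightly degenerate branch is not, and this is exactly the case the paper has to work for. The paper's argument there is not routine: it uses Corollary~\ref{sqdimension} (that $\tfrac{\dim(\C)}{2\dim(X)^2}$ is an algebraic integer in the super-modular case), \cite[Proposition 7.4]{ENO2} (no simple of odd prime-power dimension when $\C_{pt}=\C'=sVec$), and the parity trick from Lemma~\ref{slightMugercenter} to force a contradiction with $\D_{pt}=sVec$ and thereby produce a non-trivial Tannakian subcategory. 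Your sketch does not supply any of this, and the appeal to ``solvability of fusion categories of FP-dimension $p^a q^b$'' from \cite{ENO2} concerns the global dimension of the category, not the dimensions of its simple objects, so it does not apply here.

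In short: the grading reduction is a genuine shortcut for the weakly-integral-to-integral step, and it would streamline the paper's exposition, but it does not bypass the slightly degenerate analysis, which is the actual content of the proposition beyond \cite{Na2}.
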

\begin{proof}Let $Rep(G)\subseteq\C'$ be  a maximal Tannakian subcategory, then $\C\cong\D^G$, where  $\D'\subseteq sVec$.   The argument of \cite[Proposition 5.3]{Na2} shows that $G$ is a solvable group. Therefore, it is sufficient to show   $\D$ is weakly group-theoretical by \cite[Proposition 4.1]{ENO2}. If $\D$ is non-degenerate, then $\D$ is solvable by \cite[Proposition 5.3]{Na2}.

Assume that $\D$ is slightly degenerate.  When $\D$ is   nilpotent, the proposition  follows directly from \cite[Theorem 6.10]{DrGNO1}. We also assume that $\D$ does not contain non-degenerate fusion subcategories. In fact, if not,  let $\A$ be a non-degenerate fusion subcategory of $\D$, then $\D\cong\A\boxtimes\A'$ by \cite[Theorem 3.13]{DrGNO2}, then $\A$, $\A'$ and $\D$ are weakly group-theoretical by induction. Moreover, if $\D$ contains a non-trivial Tannakian subcategory $Rep(N)$, then $\D_N^0$ is again slightly degenerate and $FPdim(\D_N^0)=\frac{FPdim(\D)}{|N|^2}$ by Corollary \ref{Tannakiandimen}, by induction $\D_N^0$ is weakly group-theoretical, so is $\D$ \cite[Proposition 4.1]{ENO2}. Therefore, we only need to prove that $\D$ always contains a non-trivial Tannakian subcategory. On the contrary, let  $\D$ be an anisotropic slightly degenerate fusion category, that is, the only Tannakian subcategory of $\D$ is equivalent to $Vec$.

If $\D=\D_{ad}$, then $\D$ is integral and $\D'=\D_{pt}=sVec$ by Proposition \ref{admugercenter}.
Particularly, $\D$ does not contain simple objects of odd prime power FP-dimensions by \cite[Proposition 7.4]{ENO2}. If $p,q$ are odd primes, then $pq|FPdim(X)$ for all $X\in\Q(\D)$, then $pq|FPdim(\D_{pt})$, this is impossible. Then  non-invertible simple objects of $\D$ all have even FP-dimensions by assumption. Hence Corollary \ref{sqdimension} implies that $8|FPdim(\D)$. While
$$FPdim(\D)=FPdim(\D_{pt})+\sum\limits_{\substack{X\in\Q(\D)\\ FPdim(X)>1}}FPdim(X)^2,$$
then Lemma \ref{slightMugercenter} implies that $8|FPdim(\D_{pt})$, this is a contradiction. Therefore, $\D$ contains a non-trivial Tannakian subcategory $Rep(N)$. By induction, $\D_N^0$ is weakly group-theoretical, so is $\D$  by \cite[Proposition 4.1]{ENO2}.

If $\D_{ad}$ is a proper subcategory of $\D$, then it follows from \cite[Proposition 3.29]{DrGNO2} that $(\D_{ad})_{pt}$ is a symmetric fusion category, hence $(\D_{ad})_{pt}\subseteq sVec$, as $\D$ is anisotropic. If $(\D_{ad})_{pt}= Vec$, then $\D_{ad}$ is non-degenerate. Consequently,  $\D_{ad}=Vec$ since we assume that $\D$ does not contain   non-degenerate fusion subcategories, which implies that $\D$ is pointed, this is impossible. Then $\D_{ad}$ is an integral slightly degenerate fusion category, previously argument implies that $\D_{ad}$ contains a non-trivial Tannakian subcategory.
\end{proof}
By Corollary \ref{sqdimension}, Proposition \ref{simplep^mq^nsolv} and \cite[Theorem 5.1]{Na2}, slightly degenerate fusion categories of FP-dimensions $p^mq^nd$ are always solvable, where $p,q$ are primes, $d$ is a square-free integer such that $(pq,d)=1$.
\begin{remk}Let $2<p<q<r$ be primes. Using Proposition $\ref{orthomatrix}$, one can show that  slightly degenerate fusion categories $\C$ of FP-dimension $2p^2q^2r^2$  contain a non-invertible simple objects of prime power FP-dimension, then $\C$ contains a non-trivial Tannakian subcategory by \cite[\text{Proposition $7.4$}]{ENO2}, hence  $\C$ is  weakly group-theoretical. Then $\C$ is solvable by \cite[Theorem 5.1]{Na2}
\end{remk}

\section{Slightly degenerate  fusion categories of particular dimensions}\label{section4}
In this section we study slightly degenerate generalized Tambara-Yamagami fusion categories and classify slightly degenerate fusion categories of FP-dimension $2^n$ ($n\leq5$) and $8d$, where $d$ is a square-free integer. We begin with classification of slightly degenerate generalized Tambara-Yamagami fusion category.

Let $\C$ be a generalized Tambara-Yamagami fusion category, i.e. for non-invertible simple objects  $X,Y\in \mathcal{O}(\C)$, $X\otimes Y\in\C_{pt}$, generalized Tambara-Yamagami fusion categories were classified up to tensor equivalence in \cite{Lip}. By \cite[Lemma 5.1]{Na} there exists a normal subgroup $\Gamma\subseteq G(\C)$ such that for any non-invertible simple object $X$, $X\otimes X^*=\oplus_{g\in\Gamma}g$. Therefore $FPdim(X)\in {\{1,\sqrt{|\Gamma|}}\}$ $(X\in\Q(\C))$ and $\C_{ad}\cong Vec^\omega_\Gamma$ as fusion category. By definition, $\C$ is nilpotent but not pointed.
\begin{example}Let $\C=\mathcal{TY}(\Gamma,\tau,\mu)$ be a Tambara-Yamagami fusion category, where $\Gamma$ is a  finite abelian group, $\tau:\Gamma\times\Gamma\to k^*$ is a symmetric non-degenerate bicharacter, $\mu\in k^*$ such that $|\Gamma|\mu^2=1$. Let $X$ be the unique non-invertible simple object of $\C$, then $X\otimes X=\oplus_{g\in\Gamma}g$ and  $FPdim(\C)=2|\Gamma|$. Tambara-Yamagami fusion categories are classified up to tensor equivalence in \cite{TY}.  By \cite[\text{Theorem} 1.2]{Si}, $\C$ admits a braiding structure if and only if $\Gamma$ is an elementary abelian $2$-group.
\end{example}
\begin{example}\label{ising}Let $\I=\mathcal{TY}(\Z_2,\tau,\mu)$ be a Tambara-Yamagami fusion category, i.e. $\I$ is an Ising category.  Then $\I$ has a braiding structure. In fact, All braided Ising categories are modular, and braided Ising categories  are classified in \cite[Appendix B]{DrGNO2}.

Let  $N$ be  a positive integer. In \cite{DNS}, the authors defined $N$-Ising categories $\I_N$, which is a non-pointed braided $\Z_{2^N}$-extension of a pointed fusion category $Vec_{\Z_2}$, so $\I_N$ is a generalized Tambara-Yamagami fusion category. And all proper fusion subcategories of $\I_N$ are pointed  by \cite[Theorem 4.7]{DNS}. In particular, any non-invertible simple object $X\in\I_N$ generates $N$-Ising category $\I_N$. For $N=1$, $\I_1$ is exactly an Ising category.
\end{example}

If $\C$ is a slightly degenerate generalized Tambara-Yamagami fusion category, by
\cite[Theorem 6.12]{DrGNO1} nilpotency of $\C$ means that $\C\cong \A\boxtimes\B_1\boxtimes\cdots\boxtimes\B_s$ as braided fusion category, where $FPdim(\A)=2^m$, and $FPdim(\B_i)=p_i^{n_i}$, $p_i$ are odd primes, as $\C'=sVec$, so $\A$ is slightly degenerate, all $\B_i$  are modular, $1\leq i\leq n$. Meanwhile \cite[Theorem 5.4]{Na} shows that $\B_i$ are pointed ($1\leq i\leq s$), therefore $\C\cong \A\boxtimes \D$, where $\D$ is a pointed modular fusion category. Hence the classification of slightly degenerate generalized Tambara-Yamagami fusion category $\C$ is reduced to when $FPdim(\C)=2^m$, obviously we have $m\geq3$.

The following theorem is an application of Proposition \ref{orthomatrix}, and will be useful in the classifications throughout this section.
\begin{theo}\label{subgpofadjoint}There exists no  slightly degenerate  fusion category $\C$, which satisfies the following condition: there exists a subgroup $\Gamma\subseteq G(\C)$ such that for any non-invertible simple object $X$, $\forall g\in\Gamma$, $g\otimes X\cong X$, and $|\Gamma|\geq3$.
\end{theo}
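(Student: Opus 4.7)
The plan is to derive a contradiction from the row-orthogonality $\widehat S\overline{\widehat S}=\tfrac12 dim(\C)E$ in Proposition \ref{orthomatrix}(1), by computing the rows of $\widehat S$ indexed by $I$ and by a non-identity element of $\Gamma$ very explicitly. First I note that the statement is only non-vacuous when $\C$ has a non-invertible simple, so I take $\C$ non-pointed and pick a non-invertible simple $X_0$; then $g\otimes X_0\cong X_0$ forces $g\subseteq X_0\otimes X_0^{\ast}$, so $\Gamma\subseteq\C_{pt}\cap\C_{ad}$. Lemma \ref{slightMugercenter} gives $\chi\notin\Gamma$, so the partition $\Q(\C)=\Pi_0\sqcup\Pi_1$ can be chosen with $\Gamma\subseteq\Pi_0$ (this is consistent with closure under duality because $\Gamma$ is a subgroup). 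Proposition \ref{admugercenter} gives $(\C_{ad})'=\C_{pt}$, so every element of $\Gamma\subseteq\C_{ad}$ is centralized by every invertible simple object; writing $\mu(g,h)$ for the scalar of the double braiding on $g\otimes h$, this means $\mu(g,h)=1$ for all $g\in\Gamma$ and $h\in G(\C)$. In particular $\Gamma$ is symmetric and $\theta_g\in\{\pm 1\}$ for all $g\in\Gamma$.

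Next I would compute $s_{g,V}$ for $g\in\Gamma\setminus\{I\}$ and $V\in\Pi_0$. For invertible $h$ the above centralization gives $s_{g,h}=\mu(g,h)dim(h)=1$; for non-invertible $V$, the relation $g\otimes V\cong V$ together with the balancing equation $\theta_{g\otimes V}=\mu(g,V)\theta_g\theta_V$ forces $\mu(g,V)=\theta_g^{-1}$, hence $s_{g,V}=\theta_g^{-1}dim(V)$. Plugging this into $\sum_{V\in\Pi_0}s_{g,V}\overline{s_{I,V}}=0$ and separating invertible from non-invertible contributions yields
\[
0 \;=\; \frac{FPdim(\C_{pt})}{2}+\theta_g^{-1}\cdot\frac{FPdim(\C)-FPdim(\C_{pt})}{2}.
\]
Because $\C$ is not pointed the coefficient of $\theta_g^{-1}$ is strictly positive, so the equation forces $\theta_g=-1$ (and, as a by-product, $FPdim(\C)=2\,FPdim(\C_{pt})$).

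Thus $\theta_g=-1$ for every $g\in\Gamma\setminus\{I\}$. Since $\Gamma$ is symmetric, $g\mapsto\theta_g$ is a group homomorphism $\Gamma\to\{\pm 1\}$; having trivial kernel $\{I\}$ forces $|\Gamma|\le 2$, contradicting $|\Gamma|\ge 3$. The step that seems to need the most care is ensuring the partition $\Pi_0\sqcup\Pi_1$ can be chosen simultaneously compatible with duality and with $\Gamma\subseteq\Pi_0$, which works precisely because $\chi\notin\Gamma$ and $\Gamma$ is closed under inverses; the remainder is a bookkeeping exercise combining the orthogonality of $\widehat S$, the centralization provided by $(\C_{ad})'=\C_{pt}$, and the balancing equation.
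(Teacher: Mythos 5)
Your proof is correct and follows essentially the same route as the paper's: orthogonality of the rows of $\widehat{S}$ indexed by $I$ and $g\in\Gamma$, the balancing equation together with $(\C_{ad})'=\C_{pt}$ to get $s_{g,V}=\theta_g^{-1}FPdim(V)$ and $s_{g,h}=1$, hence $\theta_g=-1$ for all non-trivial $g\in\Gamma$, and finally multiplicativity of $\theta$ on the symmetric subgroup $\Gamma$ to contradict $|\Gamma|\geq3$. Your phrasing of the last step as a homomorphism $\Gamma\to\{\pm1\}$ with trivial kernel, and your explicit handling of the non-pointed assumption and the choice of partition, are only presentational refinements of the paper's argument.
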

\begin{proof} Assume there exists such a fusion category $\C$, by Lemma \ref {slightMugercenter}, $\chi\notin\Gamma$. Let us choose a partition of $\Q(\C)$ such that $\Gamma\subseteq\Pi_0$. For any non-trivial invertible simple object $g\in\Gamma$, by Proposition \ref{orthomatrix}, orthogonality of characters $s_{g,-}$ and $s_{I,-}$ means $\Sigma_{V\in\Pi_0}s_{I,V}s_{V,g}=0$. Using the balancing equation \cite[Proposition 8.13.8]{EGNO}, we have $s_{V,g}=FPdim(V)\theta^{-1}_g$; by Proposition \ref{admugercenter} $1=s_{g,h}=\theta^{-1}_g\theta^{-1}_h\theta_{gh}$ for all $h\in\C_{pt}$.
 Then $0=\frac{1}{2}FPdim(\C_{pt})+
\theta^{-1}_g\sum\limits_{\substack{V\in\Pi_0,\\ V\notin\Q(\C_{pt})}}FPdim(V)^2$,
 hence $\theta_g=-1$ as it is a root of unity. However, as $|\Gamma|\geq3$, there exist non-trivial invertible objects $g,h\in\Gamma$ and $g\neq h^{-1}$,  then $-1=\theta_{gh}=\theta_g\theta_h=1$, this is a  contradiction.
\end{proof}

\begin{prop}\label{slightdim8}Let  $\C$ be a slightly degenerate fusion category of FP-dimension $8$, then  $\C\cong sVec\boxtimes \D$, where $\D$ is non-degenerate fusion category of FP-dimension $4$.
\end{prop}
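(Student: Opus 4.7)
My plan is to split on whether $\C$ is integral or strictly weakly integral and, in each case, isolate a non-degenerate fusion subcategory of FP-dimension $4$. Once that is done, M\"uger's theorem (Theorem 3.13 of \cite{DrGNO2}) together with $FPdim(\C')=2$ immediately yields the claimed decomposition.

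First I would handle the integral case. By Corollary \ref{sqdimension}, each simple $X$ satisfies $FPdim(X)^2\mid 4$, so $FPdim(X)\in\{1,2\}$. Any non-invertible simple would then force, via Proposition \ref{integralslightly2^nd} applied with $n=3$, $d=1$, the impossible chain $8\mid FPdim(\C_{pt})\mid 2$. Hence $\C$ is pointed, and Proposition \ref{slighpointed} gives $\C\cong \D\boxtimes sVec$ with $\D$ pointed non-degenerate of FP-dim $4$.

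Next I would address the strictly weakly integral case. The dimension grading group $E$ is elementary abelian with $2\le |E|\le 4$, since $sVec\subseteq \C_{int}$. The case $|E|=4$ is excluded at once: each non-trivial component would consist of a single simple $X$ of FP-dim $\sqrt 2$, forcing $\chi\otimes X\cong X$ against Lemma \ref{slightMugercenter}. So $|E|=2$ and $FPdim(\C_{int})=4$; a short FP-dimension count forces $\C_{int}$ pointed, and centrality of $\chi$ rules out $G(\C_{int})\cong\Z_4$, giving $G(\C_{int})=\{I,\chi,h,\chi h\}\cong\Z_2\times\Z_2$ with $h^2=I$. The non-trivial component contains exactly two simples $X,\chi\otimes X$ of FP-dim $\sqrt 2$; orbit-stabilizer together with Theorem \ref{subgpofadjoint} identifies the stabilizer of $X$ as $\langle h\rangle$, so $X\otimes X^*=I\oplus h$.

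The hard part will be proving $X^*\cong X$. If instead $X^*\cong \chi\otimes X$, balancing together with $\chi\in\C'$ gives $\theta_{X^*}=\theta_{\chi\otimes X}=\theta_\chi\theta_X=-\theta_X$, contradicting the ribbon identity $\theta_{X^*}=\theta_X$. Once $X$ is self-dual, $X\otimes X=I\oplus h$ and $\I:=\{I,h,X\}$ is a fusion subcategory of Ising type of FP-dim $4$; orthogonality of the $\widehat S$-rows indexed by $I$ and $h$ (Proposition \ref{orthomatrix}) forces $s_{h,X}=-\sqrt 2$, so $h$ does not centralize $X$ and $\I$ is non-degenerate. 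M\"uger's theorem then gives $\C\cong \I\boxtimes \I'$, and $\I'$ has FP-dim $2$ and contains $\C'=sVec$, so $\I'=sVec$.
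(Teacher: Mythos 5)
Your proposal is correct and follows essentially the same route as the paper: the integral case is reduced to Proposition \ref{integralslightly2^nd} (which is also what the paper's forward reference to Lemma \ref{integralslightly2^n} amounts to), and in the strictly weakly integral case you show the two $\sqrt{2}$-dimensional simples are self-dual via $\theta_{X^*}=\theta_X\neq-\theta_X=\theta_{\chi\otimes X}$ --- the same computation encoded in the paper's citation of the partition $\Pi_0\cup\Pi_1$ --- and then split off the resulting Ising subcategory by \cite[Theorem 3.13]{DrGNO2}. The remaining differences are cosmetic: you bound simple dimensions via the dimension grading instead of nilpotency and \cite[Corollary 5.3]{GN}, you verify non-degeneracy of the Ising subcategory by a direct $\widehat{S}$-orthogonality computation rather than quoting \cite[Corollary B.12]{DrGNO2}, and your aside ruling out $G(\C_{pt})\cong\Z_4$ is true but unnecessary, since $h^2=I$ already follows from the stabilizer of $X$ being a subgroup of order $2$.
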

\begin{proof}When $\C$ is integral, this is a direct result of Lemma \ref{integralslightly2^n} below.
If $\C$ is strictly weakly integral, then  $\C_{int}=\C_{pt}$. Nilpotency of $\C$ implies $FPdim(X)^2|FPdim(\C_{ad})$ ($X\in\Q(\C)$) by \cite[Corollary 5.3]{GN}, therefore $FPdim(X)\in{\{1,\sqrt{2}}\}$. Then it follows from Lemma \ref{slightMugercenter} and partition of $\Q(\C)$ in section \ref{section3} that simple objects of FP-dimension $\sqrt{2}$ are self-dual, since there are exactly two simple objects of FP-dimension $\sqrt{2}$. Therefore, $\C\cong sVec\boxtimes \I$ by \cite[Theorem 3.13]{DrGNO2}, where $\I$ is an Ising category.
\end{proof}

\begin{coro}\label{generalizedTYstru}Let $\C$ be a slightly degenerate generalized Tambara-Yamagami fusion category with $FPdim(\C)=2^m$, $m\geq3$, then $\C\cong sVec\boxtimes\I\boxtimes\D$, where  $\D$ is a non-degenerate pointed fusion category.
\end{coro}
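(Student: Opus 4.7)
The plan is to combine Theorem~\ref{subgpofadjoint} with Proposition~\ref{slightdim8} to produce a non-degenerate Ising fusion subcategory of $\C$, and then apply \cite[Theorem 3.13]{DrGNO2} together with Proposition~\ref{slighpointed} to extract the pointed non-degenerate factor.

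First I would analyze the adjoint. By the generalized Tambara-Yamagami assumption there is a subgroup $\Gamma\leq G(\C)$ with $X\otimes X^*=\bigoplus_{h\in\Gamma}h$ for every non-invertible simple $X$, so every $h\in\Gamma$ satisfies $h\otimes X\cong X$. Theorem~\ref{subgpofadjoint} forces $|\Gamma|\leq 2$, and since $\C$ is GTY and hence non-pointed we have $|\Gamma|=2$; by Lemma~\ref{slightMugercenter} the fermion $\chi$ is not in $\Gamma$, so $\Gamma=\{I,g\}$ with $g\neq\chi$, and every non-invertible simple of $\C$ has FP-dimension $\sqrt{2}$.

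Next I would produce an Ising subcategory $\I\subseteq\C$ by induction on $m$. The base case $m=3$ is Proposition~\ref{slightdim8}, which gives $\C\cong sVec\boxtimes\D$ with $\D$ non-degenerate of FP-dimension $4$; non-pointedness forces $\D\cong\I$. For $m\geq 4$, I would use the Tannakian subcategory $\langle g\chi\rangle$ (Tannakian because $\theta_{g\chi}=\theta_g\theta_\chi=(-1)(-1)=1$, where $\theta_g=-1$ comes from the proof of Theorem~\ref{subgpofadjoint}) and de-equivariantize. By Remark~\ref{Tannmugdeequiv} the neutral component of the resulting $\Z_2$-crossed braided category is slightly degenerate of FP-dimension $2^{m-2}$, and one checks that it inherits the GTY property; induction then yields an Ising subcategory that lifts back to $\C$.

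Granted an Ising $\I\subseteq\C$, we have $\I\cap\C'=Vec$ (since $\chi\notin\Q(\I)=\{I,g,X\}$), so $\I$ is non-degenerate in $\C$. Then \cite[Theorem 3.13]{DrGNO2} gives $\C\cong\I\boxtimes\I'$ with $\I'$ the centralizer of $\I$ in $\C$; using $\I''=\I\vee\C'$ (Remark~\ref{Tannmugdeequiv}) one computes that the M\"uger center of $\I'$ equals $\C'$, so $\I'$ is slightly degenerate of FP-dimension $2^{m-2}$. Crucially, $\I'$ cannot contain any non-invertible simple $Y$: if it did, then $X\boxtimes Y$ would be a simple object of $\C$ of FP-dimension $2$, contradicting that all non-invertible simples of $\C$ have FP-dimension $\sqrt{2}$. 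Hence $\I'$ is pointed.

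Finally, Proposition~\ref{slighpointed} gives $\I'\cong sVec\boxtimes\D$ with $\D$ pointed non-degenerate, whence $\C\cong\I\boxtimes sVec\boxtimes\D\cong sVec\boxtimes\I\boxtimes\D$, as desired. The main obstacle is the inductive construction of the Ising subcategory for $m\geq 4$: one must verify that the neutral component of the de-equivariantization is again a GTY fusion category and that an Ising subcategory inside it lifts to $\C$ through the $\Z_2$-equivariantization without losing its fusion structure, which requires carefully tracking the $\Z_2$-action on the Ising factor.
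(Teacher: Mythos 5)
Your opening reduction (Theorem~\ref{subgpofadjoint} gives $\Gamma=\langle g\rangle\cong\Z_2$, $\theta_g=-1$, all non-invertible simples of FP-dimension $\sqrt{2}$) and your closing steps (once an Ising subcategory $\I\subseteq\C$ exists, $\C\cong\I\boxtimes\I'$ with $\I'$ slightly degenerate and necessarily pointed, then Proposition~\ref{slighpointed}) are fine. The gap is exactly the step you flag as the ``main obstacle'': your inductive construction of $\I$ via de-equivariantization by $\E=\langle g\chi\rangle$ fails, and not for lack of bookkeeping. For every non-invertible simple $X$ one has $g\otimes X\cong X$, $\theta_{g\chi}=\theta_g\theta_\chi=1$ and $\theta_{\chi\otimes X}=-\theta_X$, so the balancing equation gives $s_{g\chi,X}=\theta_{g\chi}^{-1}\theta_X^{-1}\theta_{g\chi\otimes X}FPdim(X)=-FPdim(X)$. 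Hence no non-invertible simple object centralizes $g\chi$; on the other hand $g\chi\in\C_{ad}\vee\C'=(\C_{pt})'$ by Proposition~\ref{admugercenter}, so $\E'=\C_{pt}$, which has FP-dimension $2^{m-1}$. Consequently the neutral component $\E'_{\Z_2}$ of the de-equivariantization is a \emph{pointed} slightly degenerate category of FP-dimension $2^{m-2}$: it does not inherit the generalized Tambara--Yamagami property, it contains no Ising subcategory, and there is nothing to lift back to $\C$ (the non-invertible simples all land in the non-neutral, non-braided component of the $\Z_2$-crossed category). So for $m\geq 4$ the induction never produces the Ising factor, and the proof as proposed is incomplete.

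The paper avoids constructing $\I$ directly by splitting off the pointed non-degenerate factor first: since $\C_{ad}=\langle g\rangle\cong sVec$ and the M\"uger center of $\C_{pt}$ is $\C_{ad}\vee\C'$ of FP-dimension $4$, one gets $\C_{pt}\cong\C'\boxtimes\C_{ad}\boxtimes\D$ with $\D$ pointed non-degenerate (using \cite[Lemma 5.1]{DNS}), then $\C\cong\D\boxtimes\D'$ by \cite[Theorem 3.13]{DrGNO2}, where the centralizer $\D'$ is slightly degenerate, strictly weakly integral of FP-dimension $8$, and Proposition~\ref{slightdim8} identifies $\D'\cong\I\boxtimes sVec$. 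If you want to keep your architecture (find $\I$ first, then split), you need some such reduction to the dimension-$8$ case in place of the $\langle g\chi\rangle$-de-equivariantization; your remaining steps would then go through.
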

\begin{proof} By definition and Theorem \ref{subgpofadjoint}, $\Gamma\cong\Z_2$. Let $\Gamma=\langle g\rangle$, then the proof of Theorem \ref{subgpofadjoint} implies $\theta_g=-1$, that is, $\C_{ad}\cong sVec$ as symmetric fusion categories.

Note that M\"{u}ger center  of $\C_{pt}$  is $\C_{ad}\vee \C'$ by Proposition \ref{admugercenter}, which has FP-dimension $4$. So, \cite[Lemma 5.1]{DNS} says that $\C_{pt}\cong \C'\boxtimes \C_0$ for some pointed braided fusion category $\C_0$. While $\C_{ad}\cong sVec\subseteq\C_0$ also centralizes $\C_{0}$, again we obtain that $\C_0\cong \C_{ad}\boxtimes\D$ for another braided pointed fusion category $\D$. Consequently, $\C_{pt}\cong \C'\boxtimes\C_{ad}\boxtimes\D$. Then  $\D$ must be a non-degenerate fusion category. Since M\"{u}ger of $\C_{pt}$ is exactly $\C'\boxtimes\C_{ad}\boxtimes\D'$ by definition, where $\D'$ is  the M\"{u}ger center of $\D$, $FPdim(\C'\boxtimes\C_{ad}\boxtimes\D')=4$ if and only if $\D'=Vec$ if and only if $\D$ is non-degenerate.

Therefore, we have a  braided fusion category equivalence $\C\cong\D\boxtimes\A$ by \cite[Theorem 3.13]{DrGNO2}, where $\A$ is a slightly degenerate fusion category of FP-dimension $8$. Since $\C$ is strictly weakly integral, $\A$ must be strictly weakly integral. Proposition \ref{slightdim8} says that $\A\cong\I\boxtimes sVec$. Therefore, $\C\cong sVec\boxtimes\I\boxtimes\D$ as required.
\end{proof}

\begin{remk}\label{N-Ising}
Note that Corollary \ref{generalizedTYstru} also shows that $N$-Ising categories $\I_N$ can never be slightly degenerate. Hence, together with \cite[Lemma 4.12]{DNS}, we obtain that M\"{u}ger center of $\I_N$ does not contains a symmetric fusion category $sVec$.
\end{remk}
Next we give classifications of slightly degenerate fusion categories of FP-dimensions $2^n$ $(n\leq5)$. There is nothing to prove when $n\leq2$, since $\C$ is pointed. We begin with integral slightly degenerate fusion categories.
\begin{remk}\label{premodular8}Let $\C$ pre-modular fusion category of FP-dimension $8$, then $\C$ is pointed or $\mathcal{TY}(\Z_2\times\Z_2,\tau,\mu)$ if $\C$ is integral by \cite[Theorem 1.2]{Si}. Meanwhile, $\C$ is a generalized Tambara-Yamagami fusion category when $\C$ is strictly weakly integral, and simple objects have FP-dimensions $1$ or $\sqrt{2}$,  if $\C$ does not contain a self-dual non-invertible simple object, then $\C\cong\I_2$ and $\C'\ncong sVec$ by Remark \ref{N-Ising} and \cite[Theorem 5.5]{DNS}.
\end{remk}
\begin{lemm} \label{integralslightly2^n}Let $\C$ be an integral  slightly degenerate fusion category of FP-dimension $2^n$, if $\C$ is not pointed, then $n\geq6$.
\end{lemm}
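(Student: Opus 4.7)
For $n\le 4$ the conclusion is immediate from Proposition \ref{integralslightly2^nd}: the divisibility chain $8\mid FPdim(\C_{pt})\mid 2^{n-2}$ has no solutions, so a non-pointed $\C$ of FP-dim $2^n$ cannot exist when $n\le 4$. It remains to rule out a non-pointed $\C$ of FP-dim $32$.

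Assume $n=5$ and $\C$ is non-pointed. Proposition \ref{integralslightly2^nd} forces $FPdim(\C_{pt})=8$. By Corollary \ref{sqdimension}, $FPdim(X)^2\mid 16$ for every simple $X$, and Lemma \ref{slightMugercenter} makes the number of non-invertibles of each FP-dim even, so the $FPdim=4$ option is incompatible with the dimension count $FPdim(\C)-FPdim(\C_{pt})=24$; hence there are exactly six non-invertibles, each of FP-dim $2$. Applying Proposition \ref{admugercenter} and Theorem \ref{dimensionform} to $\D=\C_{ad}$ gives $FPdim(\C_{ad})\cdot 8=32\cdot FPdim(\C_{ad}\cap\C')$, so $FPdim(\C_{ad})\in\{4,8\}$.

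If $FPdim(\C_{ad})=4$, then $\C_{ad}$ is integral braided of FP-dim $4$, hence pointed; $|U_\C|=8$ and each component has FP-dim $4$, so the $8$ invertibles and $6$ non-invertibles distribute as two fully pointed components (carrying all of $G(\C_{ad})$) together with six components each supporting a unique simple of FP-dim $2$. Thus $\Gamma:=G(\C_{ad})$ of order $4$ stabilizes every non-invertible simple (tensoring by $g\in\C_{ad}$ preserves components, and each non-pointed component has a single non-invertible). Theorem \ref{subgpofadjoint} applied to this $\Gamma$ gives a contradiction.

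If $FPdim(\C_{ad})=8$, then $\chi\in\C_{ad}$; a count inside $\C_{ad}$ using the $\chi$-pairing of non-invertibles rules out $(a,b)=(4,1)$ and $(0,2)$, so $\C_{ad}$ is pointed and $\C_{ad}=\C_{pt}$, which is symmetric by Proposition \ref{admugercenter}. Each of the three non-trivial components of $U_\C\cong\Z_2\times\Z_2$ contains exactly two simples of FP-dim $2$ paired by $\chi$, with common stabilizer $H_u\le G(\C)$ of index $2$ avoiding $\chi$. Combining Proposition \ref{orthomatrix}(1) with the balancing equation, with $s_{g,h}=1$ for $g,h\in\C_{pt}$ (symmetric) and with $\theta_{\chi V}=-\theta_V$, I would deduce that for every non-trivial $g\in\Pi_0\cap G(\C)$
\[4+4\theta_g\bigl(2n_+(g)-3\bigr)=0,\qquad n_+(g):=|\{u:g\in H_u\}|.\]
Since $\theta_g$ is a root of unity and the partition $\Pi_0$ may be chosen freely on each $\chi$-pair, this forces $n_+(g)\in\{1,2\}$ for every $g\in G(\C)\setminus\{I,\chi\}$; equivalently $\bigcap_u H_u=\{I\}$ and $G(\C)\setminus\bigcup_u H_u=\{\chi\}$. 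This excludes $G(\C)\cong\Z_8$ (no index-$2$ subgroup avoiding $\chi$) and $\Z_4\times\Z_2$ (where $a^2$ lies in every index-$2$ subgroup, violating $\bigcap_u H_u=\{I\}$). The main obstacle is the remaining configuration $G(\C)\cong\Z_2\times\Z_2\times\Z_2$ with three stabilizers in general position; I would close this case by using Corollary \ref{slightly2^nd} to conclude group-theoreticity of $\C$ and then de-equivariantizing by the maximal Tannakian $Rep(K)\subseteq\C_{ad}$ with $K\cong\Z_2\times\Z_2$ to present $\C$ as an equivariantization of a pointed braided $K$-crossed extension of $sVec$ of FP-dim $8$, where the condition $\C'=sVec$ can be shown to force the $K$-equivariantization to be pointed---contradicting the non-pointedness of $\C$.
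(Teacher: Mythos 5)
Your reduction to $n=5$, the count giving exactly six simple objects of FP-dimension $2$, the dichotomy $FPdim(\C_{ad})\in\{4,8\}$, the exclusion of $FPdim(\C_{ad})=4$ via Theorem \ref{subgpofadjoint} (a case the paper glosses over), and, when $\C_{ad}=\C_{pt}$ is symmetric, the identification of the index-$2$ stabilizers $H_u$ together with the orthogonality relation $4+4\theta_g^{-1}(2n_+(g)-3)=0$ (your $\theta_g$ versus $\theta_g^{-1}$ is immaterial since $\theta_g=\pm1$ here), hence $n_+(g)\in\{1,2\}$, are all correct, and they do eliminate $G(\C)\cong\Z_8$ and $\Z_4\times\Z_2$. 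But the case you yourself flag as the main obstacle, $G(\C)\cong\Z_2\times\Z_2\times\Z_2$, is not proved, and it cannot be dispatched by the constraints you have derived: taking $G(\C)=\langle a,b,c\rangle$, $\chi=abc$, $H_1=\langle a,b\rangle$, $H_2=\langle b,c\rangle$, $H_3=\langle a,c\rangle$, $\theta_a=\theta_b=\theta_c=-1$ satisfies $\bigcap_u H_u=\{I\}$, $G(\C)\setminus\bigcup_u H_u=\{\chi\}$, $\theta_g=3-2n_+(g)$ and multiplicativity of $\theta$ on $G(\C)$, so genuinely new categorical input is needed at this point. Your proposed closing move defers exactly that input: you assert without argument that the de-equivariantization $\C_K$ by the maximal Tannakian $Rep(K)\subseteq\C_{ad}$ is pointed (it need not be a priori -- its nontrivial components have FP-dimension $2$ and could consist of a single simple of dimension $\sqrt{2}$), and, more seriously, that ``$\C'=sVec$ can be shown to force the $K$-equivariantization to be pointed,'' which is essentially the statement being proved: equivariantizations of pointed categories of dimension $8$ by $\Z_2\times\Z_2$ can perfectly well have eight invertibles and six two-dimensional simples, and Corollary \ref{slightly2^nd} (group-theoreticity) does not by itself yield pointedness. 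No mechanism is offered for this step, so the proof is incomplete.

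For comparison, the paper closes precisely this configuration by a further $\widehat{S}$-matrix argument rather than by de-equivariantization: inside the $16$-dimensional subcategory $\A=\C_e\oplus\C_u$ it finds a self-dual two-dimensional simple $X$ with $X\otimes X=I\oplus g\oplus h\oplus gh$ generating a braided Tambara--Yamagami subcategory, and then orthogonality of the rows $s_{I,-}$, $s_{g,-}$, $s_{h,-}$, $s_{X,-}$ of $\widehat{S}$ produces the incompatible equations $4+s_{g,U}+s_{g,V}=0$ and $4-s_{g,U}-s_{g,V}=0$. Some argument of this kind -- exploiting more rows of $\widehat{S}$, or the explicit TY fusion rules -- is what your sketch is missing; until the elementary abelian case is handled, the lemma is not established.
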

\begin{proof}Let $\C$ be an integral slightly degenerate  fusion category of FP-dimension $2^n$, by Lemma \ref{integralslightly2^nd} it suffices to show that if $n=5$ then $\C$ is pointed.
On the contrary, assume that $\C$ is not pointed, then $FPdim(\C_{pt})=8$ and $FPdim(\C_{ad})=8$ by Lemma \ref{integralslightly2^nd}, so $\C_{ad}=\C_{pt}$, Proposition \ref{admugercenter} implies $\C_{ad}$ is symmetric.  Also nilpotency of $\C$ implies $FPdim(X)\in{\{1,2}\}$ ($X\in\Q(\C)$) by \cite[Corollary 5.3]{GN}.

Note that group $U_\C$ is isomorphic to either $\Z_4$ or $\Z_2\times\Z_2$, so $\C$ contains a non-pointed fusion category $\A\supseteq\C_{pt}$ of FP-dimension $16$, Lemma \ref{slightMugercenter} means there exists a self-dual non-invertible simple object $X\in\Q(\A)$, then $X\otimes X=I\oplus g\oplus h\oplus gh$ and $\Z_2\times\Z_2=\langle g\rangle\times\langle h\rangle$, since $X$ generates a braided Tambara-Yamagami fusion category of FP-dimension $8$ \cite[Theorem 1.2]{Si}. By Proposition \ref{admugercenter} $g,h$ generate a symmetric fusion subcategory $\B$, so $\B$ contains a non-trivial Tannakian subcategory $\E$, without loss of generality, assume $\theta_g=1$.

Let $\Pi_0={\{I,g,h,gh,X,U,V}\}$, where $X,U,V$ are non-isomorphic simple objects of FP-dimension $2$. Then by Proposition \ref{orthomatrix}, orthogonality of  characters $s_{g,-}$ and $s_{I,-}$ implies $4+s_{g,U}+s_{g,V}=0$, then $\theta_U=-\theta_{g\otimes U}$ and $\theta_V=-\theta_{g\otimes V}$; orthogonality of characters $s_{I,-}$ and $s_{h,-}$ implies $1+\theta_h^{-1}+\theta_h^{-1}\theta_U^{-1}\theta_{h\otimes U}+\theta_h^{-1}\theta_V^{-1}\theta_{h\otimes V}=0$; orthogonality of characters $s_{g,-}$ and $s_{h,-}$ implies $1+\theta_h^{-1}-\theta_h^{-1}\theta_U^{-1}\theta_{h\otimes U}-\theta_h^{-1}\theta_V^{-1}\theta_{h\otimes V}=0$, then $\theta_h=-1=\theta_{gh}$ and $s_{X,X}=0$; finally orthogonality of characters $s_{g,-}$ and $s_{X,-}$ implies $4-s_{g,U}-s_{g,V}=0$, while this contradicts to equality $4+s_{g,U}+s_{g,V}=0$.
\end{proof}

Hence Lemma \ref{integralslightly2^nd}, Lemma \ref{general2p^nd4p^nd} and Lemma \ref{integralslightly2^n} show that integral slightly degenerate fusion categories $\C$ of FP-dimension less than $64$ are pointed, then $\C\cong sVec\boxtimes\D$ by Proposition \ref{slighpointed}, where $\D$ is a pointed non-degenerate fusion category.

\begin{prop}\label{slight16}Let $\C$ be a  slightly degenerate fusion category  of FP-dimension $16$, then  $\C$ is pointed  or $\C\cong sVec\boxtimes\I \boxtimes\D$, where $\D$ is a non-degenerate fusion category.
\end{prop}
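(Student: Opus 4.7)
The plan is to split $\C$ into the integral case (which collapses by previously proved results) and the strictly weakly integral case, where I intend to show that $\C$ must be a generalized Tambara--Yamagami fusion category and then invoke Corollary \ref{generalizedTYstru}. If $\C$ is integral, Lemma \ref{integralslightly2^n} forces $\C$ to be pointed, since $16=2^{4}$ with $n<6$; this already gives the first alternative of the proposition. So assume throughout that $\C$ is strictly weakly integral and not pointed.

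Applying Corollary \ref{sqdimension} with $FPdim(\C)=16$ shows that $FPdim(X)^{2}\in\{1,2,4,8\}$ for every $X\in\Q(\C)$, so $FPdim(X)\in\{1,\sqrt{2},2,2\sqrt{2}\}$. Lemma \ref{slightMugercenter} provides a fixed-point-free involution $X\mapsto\chi\otimes X$ on $\Q(\C)$ preserving FP-dimensions, so the numbers $a,b,c,d$ of simples of FP-dimensions $1,\sqrt{2},2,2\sqrt{2}$ are all even, and $a\geq 2$ because $sVec\subseteq\C_{pt}$. The identity $a+2b+4c+8d=16$ immediately rules out $d\geq 2$ (as $8d\geq 16$ would force $a<2$), so $d=0$. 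Next I would use the faithful dimension grading $\C=\bigoplus_{g\in E}\C_g$, where $E$ is a non-trivial elementary abelian $2$-group since $\C$ is strictly weakly integral, and all integral simples lie in $\C_{int}=\C_e$. Faithfulness gives $|E|\cdot(a+4c)=16$, and a brief case analysis over $|E|\in\{2,4,8\}$ with $a,c$ even and $a\geq 2$ leaves only $(|E|,a,b,c)=(2,8,4,0)$ and $(4,4,6,0)$. In particular $c=0$, so every non-invertible simple of $\C$ has FP-dimension $\sqrt{2}$.

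For any two non-invertible simples $X,Y\in\Q(\C)$, the product $X\otimes Y$ then has FP-dimension $2$; matching rational and irrational parts in any decomposition into simples forbids any dim-$\sqrt{2}$ summand, so $X\otimes Y\in\C_{pt}$. Hence $\C$ is a (non-pointed) generalized Tambara--Yamagami fusion category, and Corollary \ref{generalizedTYstru} yields $\C\cong sVec\boxtimes\I\boxtimes\D$ with $\D$ a non-degenerate pointed fusion category; comparing FP-dimensions $2\cdot 4\cdot FPdim(\D)=16$ forces $FPdim(\D)=2$, giving the second alternative of the proposition.

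The main obstacle is the counting step in the second paragraph: I have to exploit Corollary \ref{sqdimension}, the fixed-point-free $\chi$-pairing of Lemma \ref{slightMugercenter}, and the faithfulness of the dimension grading \emph{simultaneously} in order to eliminate the mixed case where simples of FP-dimensions $2$ and $\sqrt{2}$ could coexist. Everything else either reduces to a prior lemma or is a direct application of Corollary \ref{generalizedTYstru}, so the numerical squeeze $c=0$ is really the heart of the argument.
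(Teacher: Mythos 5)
Your proof is correct, and it reaches the paper's conclusion by the same overall skeleton (integral case via Lemma \ref{integralslightly2^n}, then show $\C$ is generalized Tambara--Yamagami and invoke Corollary \ref{generalizedTYstru}), but the middle step is argued differently. The paper first pins down the pointed and adjoint parts: it uses Theorem \ref{dimensionform} together with Proposition \ref{admugercenter} to force $FPdim(\C_{pt})=8$ and $\C_{int}=\C_{pt}$, and then nilpotency via \cite[Corollary 5.3]{GN} ($FPdim(X)^2\mid FPdim(\C_{ad})\mid 4$) to conclude $FPdim(X)\in\{1,\sqrt{2}\}$. You instead run a purely numerical squeeze: Corollary \ref{sqdimension} bounds $FPdim(X)^2\mid 8$, the fixed-point-free $\chi$-pairing of Lemma \ref{slightMugercenter} makes all multiplicities even, and faithfulness of the dimension grading gives $|E|\cdot(a+4c)=16$, which eliminates simples of FP-dimension $2$ and $2\sqrt{2}$. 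This buys independence from the centralizer relation $(\C_{ad})'=\C_{pt}$ and from \cite[Corollary 5.3]{GN}, at the cost of a short case analysis; note that killing the $|E|=8$ case really uses evenness of $b$ (which you did establish, though your summary sentence only cites evenness of $a,c$), and that your surviving configuration $(|E|,a,b,c)=(4,4,6,0)$ with $FPdim(\C_{pt})=4$ is not actually realizable -- the paper's route excludes it directly, while in your argument it is harmlessly absorbed because Corollary \ref{generalizedTYstru} applies to it anyway and its conclusion forces $FPdim(\C_{pt})=8$. The final identification of $\D$ by comparing FP-dimensions is the same in both treatments.
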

\begin{proof}
If $\C$ is integral, then it is pointed by Lemma \ref{integralslightly2^n}. Now assume $\C$ is strictly weakly integral, obviously $4|FPdim(\C_{pt})$, then  $FPdim(\C_{pt})=8$ and $FPdim(\C_{ad})|4$, $\C_{int}=\C_{pt}$. Indeed, if $FPdim(\C_{pt})=4$, then
$FPdim(\C_{ad})=\left\{
\begin{array}{ll}
4, & \hbox{$\C'\cap\C_{ad}=Vec$,} \\
8, & \hbox{$\C'\cap\C_{ad}=sVec$.}
\end{array}
\right.$ by Theorem \ref{dimensionform}, while in the first case $FPdim(\C_{pt})=8$ and in the second case this contradicts to Lemma \ref{slightMugercenter}. Since $\C$ is nilpotent, $FPdim(X)^2|4$ ( $\forall X\in\Q(\C)$) by \cite[Corollary 5.3]{GN}. Note that $\C_{pt}=\C_{int}$, $FPdim(X)\in{\{1,\sqrt{2}}\}$, for arbitrary simple object $X\in\Q(\C)$, then $\C$ is a slightly degenerate  generalized Tambara-Yamagami fusion category. Then Corollary \ref{generalizedTYstru}  implies the classification.
\end{proof}
\begin{lemm}\label{nondegen16}Let $\C$ be a non-degenerate fusion category of FP-dimension $16$, then $\C$ is pointed or $\C\cong \I\boxtimes\D$, where $\D$ is equivalent to an Ising category or $\D$ is pointed.
\end{lemm}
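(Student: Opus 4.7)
The plan is to split on whether $\C$ is integral. If $\C$ is integral, then Proposition \ref{integralmodular2^nd} makes $\C$ nilpotent, and, as used in the proof of that proposition for the dimension $16d$ component, \cite[Lemma 4.11]{DongNa} with $d=1$ forces $\C$ to be pointed. So the substance of the lemma lies in the strictly weakly integral case, and below I assume $\C$ is strictly weakly integral. Then $\C$ is nilpotent of dimension $2^4$, so by \cite[Corollary 5.3]{GN} every simple object $X$ satisfies $FPdim(X)^2\mid 16$, hence $FPdim(X)\in\{1,\sqrt{2},2\}$, and there exists a simple $X$ with $FPdim(X)=\sqrt{2}$.

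The plan now is to produce an Ising modular subcategory generated by such an $X$. From $FPdim(X\otimes X^*)=2$ and $I\subseteq X\otimes X^*$ one gets $X\otimes X^*=I\oplus g$ for some $g\in G(\C)$, and Frobenius reciprocity gives $g\otimes X\cong X$ together with $g^2=I$. The central step is to show $X$ is self-dual. Assume toward a contradiction that $X\ncong X^*$; then $N^I_{X,X}=\dim Hom(X,X^*)=0$, so $I\nsubseteq X\otimes X$, while $FPdim(X\otimes X)=2$. Since simple FP-dimensions lie in $\{1,\sqrt 2,2\}$, the only possibilities are: (i) $X\otimes X$ is a single simple $Y$ of FP-dim $2$; (ii) $X\otimes X=2h$ for one invertible $h$; (iii) $X\otimes X=h_1\oplus h_2$ with distinct invertibles.

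In case (i), computing $Y\otimes X^*=X\otimes(X\otimes X^*)=X\otimes(I\oplus g)=2X$ gives $N^X_{Y,X^*}=2$, contradicting $N^X_{Y,X^*}=N^Y_{X,X}=1$. In case (ii) one has $N^h_{X,X}=2=\dim Hom(X,h\otimes X^*)$, impossible between simples. In case (iii), for $i=1,2$ one has $N^{h_i}_{X,X}=1=\dim Hom(X,h_i\otimes X^*)$, so $X\cong h_i\otimes X^*$, forcing $h_1=h_2$. Hence $X\cong X^*$ and $X\otimes X=I\oplus g$. I expect this Frobenius-reciprocity case analysis to be the main obstacle; once it is settled, the rest is standard.

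Given self-duality, the fusion subcategory $\I:=\langle X\rangle=\{I,g,X\}$ is closed with Ising fusion rule and inherits a braiding from $\C$, so by Example \ref{ising} $\I$ is modular. Applying \cite[Theorem 3.13]{DrGNO2} to the non-degenerate fusion subcategory $\I\subseteq\C$ produces a braided equivalence $\C\cong\I\boxtimes\I'$, where $\I'$ is a modular fusion category of FP-dimension $4$. By the integral case already handled (for $\I'$ integral, $\I'$ is pointed) together with the same argument that produced $\I$ (for $\I'$ strictly weakly integral, $\I'$ contains an Ising subcategory and therefore equals it by dimension count), $\I'$ is either pointed or equivalent to an Ising category, yielding the stated decomposition.
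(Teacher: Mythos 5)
Your strictly weakly integral argument breaks down at its very first step. From \cite[Corollary 5.3]{GN} (or from $FPdim(X)^2\mid FPdim(\C)=16$) you conclude that $FPdim(X)\in\{1,\sqrt{2},2\}$ and hence that a simple of dimension $\sqrt{2}$ exists; but the divisibility also permits $FPdim(X)=2\sqrt{2}$ (the value $4$ is excluded trivially by the total dimension, $2\sqrt 2$ is not). This is not a vacuous possibility: braided fusion categories of FP-dimension $16$ whose unique non-integral simple has dimension $2\sqrt{2}$ do exist, namely the braided $\mathcal{TY}(\Z_2\times\Z_2\times\Z_2,\tau,\mu)$ occurring in Corollary \ref{premodular16} of this paper. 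Nothing in your argument up to that point uses non-degeneracy, which is exactly what is needed to exclude this shape; since your entire construction of the Ising subcategory rests on having a $\sqrt{2}$-dimensional simple, this is a genuine gap. To close it you must use modularity, e.g.: if $Y$ were simple with $FPdim(Y)=2\sqrt 2$, it would be the unique non-integral simple (the non-integral component of the dimension grading has dimension $8=FPdim(Y)^2$), so $g\otimes Y\cong Y$ for every invertible $g$ and hence $\C_{pt}\subseteq\C_{ad}$; since $\C_{ad}=(\C_{pt})'$ for non-degenerate $\C$ \cite[Corollary 6.8]{GN}, Theorem \ref{dimensionform} forces $FPdim(\C_{pt})\le 4$, the count $8=FPdim(\C_{pt})+4b$ of $\C_{int}$ then gives $FPdim(\C_{pt})=4=FPdim(\C_{ad})$, and the $2$-dimensional simple contained in $Y\otimes Y^*$ lies in $\C_{ad}=\C_{pt}$, a contradiction. (The paper avoids this issue differently: it splits on $FPdim(\C_{pt})\in\{2,4,8\}$ and disposes of the generalized Tambara--Yamagami case, which contains the $2\sqrt2$ shape, by \cite[Theorem 5.4]{Na}, and of the $FPdim(\C_{pt})=4$ case via an $sVec$ inside $\C_{ad}$ and Proposition \ref{slightdim8}.)

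Apart from this, your route is genuinely different from the paper's and the remaining steps are sound: the Frobenius-reciprocity analysis showing a $\sqrt2$-dimensional simple is self-dual, the resulting Ising subcategory, and the M\"uger decomposition $\C\cong\I\boxtimes\I'$ via \cite[Theorem 3.13]{DrGNO2} are all correct, and this is more self-contained than the paper's appeal to \cite[Theorem 5.4]{Na}. Two small points of hygiene: your integral case handles dimension $16$, so the claim that $\I'$ of dimension $4$ is pointed when integral needs its own (trivial) count, not an appeal to "the integral case already handled"; and the nilpotency you use for $\C$ of dimension $2^4$ should be credited to \cite[Theorem 8.28]{ENO1}.
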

\begin{proof}If $\C$ is integral, then $\C$ is pointed, as slightly degenerate fusion category $sVec\boxtimes\C$ is pointed by Lemma \ref{integralslightly2^n}. If $\C$ is strictly weakly integral, then $FPdim(\C_{pt})|8$. If $FPdim(\C_{pt})=2$, then $FPdim(\C_{ad})=8$ by \cite[Corollary 6.8]{GN}, this is impossible, since non-invertible simple objects of $\C_{ad}$ have FP-dimension $2$ by \cite[Corollary 5.3]{GN}.

When $FPdim(\C_{pt})=8$, $\C$ is a non-degenerate generalized Tambara-Yamagami fusion category, so $\C\cong\I\boxtimes\D$ by \cite[Theorem 5.4]{Na}, where $\D$ is a pointed non-degenerate fusion category. If $FPdim(\C_{ad})=4$, then $\C_{ad}=\C_{pt}$ is a symmetric fusion category by \cite[Corollary 6.8]{GN}. Since $\C$ is strictly weakly integral, $\C_{ad}$ is not Tannakian, otherwise $\C$ is braided equivalent to Drinfeld center of a pointed fusion category by \cite[Theorem 4.64]{DrGNO2}. Therefore, $\C_{ad}$ contains a subcategory $\A\cong sVec$, then centralizer $\A'$ of $\A$ is a slightly degenerate fusion category of FP-dimension $8$. By Proposition \ref{slightdim8}, $\C$ contains an Ising category $\I$, then $\C$ is braided equivalent to Deligne tensor product of two Ising categories by \cite[Theorem 3.13]{DrGNO2}.
\end{proof}
\begin{coro}\label{premodular16}Let $\C$ be a strictly weakly integral braided fusion category of FP-dimension $16$, then $\C$ is a generalized Tambara-Yamagami fusion category or $\C\cong\mathcal{TY}(\Z_2\times \Z_2\times \Z_2,\tau,\mu)$ or $\C\cong \A_1\boxtimes\A_2$, where $\A_i$ are  Ising categories.
\end{coro}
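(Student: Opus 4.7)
The plan is to analyze $\C$ by cases on its M\"uger center $\C'$, which is a symmetric and hence super-Tannakian fusion subcategory. Since $FPdim(\C)=2^4$ is a prime power, $\C$ is nilpotent by \cite[Theorem 8.28]{ENO1}, so \cite[Corollary 5.3]{GN} forces the non-invertible simples to have FP-dimensions in $\{\sqrt 2,2,2\sqrt 2\}$, and strict weak integrality guarantees at least one simple of non-integer FP-dimension; in particular $\C$ is not pointed.

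Suppose first that $\C'=Vec$, so $\C$ is non-degenerate. Then Lemma \ref{nondegen16} combined with non-pointedness yields either $\C\cong\A_1\boxtimes\A_2$ with both $\A_i$ Ising (the third form of the corollary), or $\C\cong\I\boxtimes\D$ with $\D$ pointed of FP-dim $4$. In the latter case every non-invertible simple has the form $X\boxtimes g$ with $X$ the unique non-invertible simple of $\I$, and $(X\boxtimes g)\otimes(X\boxtimes h)=(I\oplus\chi)\boxtimes gh\in\C_{pt}$, so $\C$ is a generalized Tambara-Yamagami fusion category. Suppose next that $\C'=sVec$. Then Proposition \ref{slight16}, together with strict weak integrality, gives $\C\cong sVec\boxtimes\I\boxtimes\D$ with $\D$ non-degenerate of FP-dim $2$ (hence pointed), which is again generalized Tambara-Yamagami.

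It remains to treat the case when $\C'$ contains a non-trivial Tannakian subcategory. Pick a maximal Tannakian $\E=Rep(G)\subseteq\C'$ with $|G|\geq 2$. By Remark \ref{Tannmugdeequiv} the de-equivariantization $\D:=\C_G$ is a braided fusion category of FP-dim $16/|G|\in\{8,4,2\}$, non-degenerate when $\C'$ is itself Tannakian and slightly degenerate otherwise. Running through $|G|\in\{2,4,8\}$ and applying Proposition \ref{slightdim8}, Lemma \ref{nondegen16}, Proposition \ref{slight16}, Remark \ref{premodular8} and direct inspection in the smallest dimensions identifies $\D$ as one of: pointed, Ising, $\I\boxtimes$(pointed), or $sVec\boxtimes\I\boxtimes$(pointed). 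Reconstructing $\C=\D^G$, the equivariantization either returns to the generalized Tambara-Yamagami shapes already found, or produces a category with $|G(\C)|=8$ and a unique non-invertible simple $X$ of FP-dim $2\sqrt 2$ fusing as $X\otimes X=\bigoplus_{g\in G(\C)}g$; by \cite[Theorem 1.2]{Si} the existence of a braiding then forces $G(\C)\cong\Z_2\times\Z_2\times\Z_2$ and $\C\cong\mathcal{TY}(\Z_2\times\Z_2\times\Z_2,\tau,\mu)$.

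The \textbf{main obstacle} is the last case, where one must track how the Tannakian subcategory interacts with the dimension grading and the universal grading under (de-)equivariantization, and rule out spurious outcomes. The key leverage is Theorem \ref{subgpofadjoint}, which bounds the stabilizer in $G(\C)$ of any non-invertible simple to have order at most $2$, together with \cite[Theorem 1.2]{Si}, which characterizes when a generalized Tambara-Yamagami structure is braided. These tools, combined with the dimension counts imposed by Theorem \ref{dimensionform} and Proposition \ref{admugercenter}, force $\C$ into exactly one of the three listed classes.
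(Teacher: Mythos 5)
Your overall strategy (case analysis on the M\"uger center, then de-equivariantization and reconstruction) differs from the paper, which instead fixes the dimension grading $E=\Z_2$ and argues by cases on $FPdim(\C_{pt})\in\{4,8\}$ and on the possible dimension types of simple objects. Your first three cases ($\C'=Vec$ via Lemma \ref{nondegen16}, and $\C'=sVec$ via Proposition \ref{slight16}) are fine. The problem is that the entire difficulty of the statement sits in your last case, and there you only assert the conclusion: ``running through $|G|\in\{2,4,8\}$ \dots{} and direct inspection'' and ``the equivariantization either returns to the generalized Tambara-Yamagami shapes already found, or produces \dots{} $\mathcal{TY}(\Z_2\times\Z_2\times\Z_2,\tau,\mu)$'' is exactly what has to be proved, and it is not automatic. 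Concretely, a $\Z_2$-equivariantization of $\D\cong\I\boxtimes Vec_{\Z_2}$ can a priori have the fusion pattern of $\I\boxtimes\I$ (four invertibles, four simples of FP-dimension $\sqrt2$, one simple of FP-dimension $2$) while having $\C'\cong Rep(\Z_2)$ non-trivial; such a category is neither generalized Tambara-Yamagami (because $V\otimes X$ has FP-dimension $2\sqrt2$, so it cannot lie in $\C_{pt}$), nor $\mathcal{TY}(\Z_2^3,\tau,\mu)$, nor a product of two Ising categories (those are modular). Ruling this configuration out is precisely where the paper works hardest: it shows $\C'$ would have to be Tannakian of dimension $2$, that the centralizer of the corresponding $sVec\subseteq\C_{ad}$ is a generalized Tambara-Yamagami category of dimension $8$ equivalent to the $2$-Ising category $\I_2$ (Remark \ref{premodular8}), and then gets a contradiction with Remark \ref{N-Ising} on the M\"uger center of $\I_N$. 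Similarly, the sub-case with simple dimensions $\{1,2,2\sqrt2\}$ and $\C_{pt}\cong Rep(\Z_2\times\Z_2)$ Tannakian requires the explicit multiplicity count $[F(Y)\otimes F(Y):I]=4<[F(Y\otimes Y):I]$ under the forgetful functor to $\C_G\cong\I$; nothing in your sketch performs or replaces these arguments.

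A secondary inaccuracy: you appeal to Theorem \ref{subgpofadjoint} as ``bounding the stabilizer in $G(\C)$ of any non-invertible simple,'' but that theorem is stated only for slightly degenerate categories and only for a subgroup acting trivially on \emph{all} non-invertible simples simultaneously; in your remaining case $\C$ has a non-trivial Tannakian subcategory in $\C'$, so $\C$ is not slightly degenerate and the theorem does not apply to it directly (and you do not verify its hypotheses for any de-equivariantization). As written, the proposal establishes the easy cases but leaves a genuine gap precisely where spurious fusion patterns must be excluded.
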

\begin{proof}Since $\C$ is nilpotent and weakly integral, direct computation shows  dimension grading group $E=\Z_2$. If $FPdim(\C_{pt})=8$, then $FPdim(X)\in{\{1,\sqrt{2}}\}$ or ${\{1,2\sqrt{2}}\}$ $(\forall X\in\Q(\C))$. In the first case $\C$ is a generalized Tambara-Yamagami fusion category, if $\C$ contains a self-dual non-invertible simple object, then  $\C\cong\I\boxtimes\D$, $\D$ is a  pointed fusion category; if not, $\C\cong\I_3$ or $\C\cong\I_2\boxtimes\B$ by \cite[Theorem 5.5]{DNS}. In the second case, $\C$ is a Tambara-Yamagami fusion category, then it follows from \cite[Theorem 1.2]{Si} that $\C\cong \mathcal{TY}(\Z_2\times \Z_2\times \Z_2,\tau,\mu)$.

If $FPdim(\C_{pt})=4$, then $\C$ contains a unique simple object $V$ of FP-dimension $2$, and $G(\C)=\Z_2\times\Z_2=\langle g\rangle\times\langle h\rangle$ by \cite[Theorem 1.2]{Si}, since $\C_{int}$  is a Tambara-Yamagami fusion category of FP-dimension $8$. Note that $FPdim(X)\in{\{1,\sqrt{2},2}\}$ or ${\{1,2,2\sqrt{2}}\}$ ($X\in\Q(\C)$) by \cite[Corollary 5.3]{GN}.
If $FPdim(X)\in{\{1,2,2\sqrt{2}}\}$, then $\C_{pt}\subseteq\C_{ad}=\C_{int}$, by
\cite[Corollary 3.26]{DrGNO2} $\C_{pt}\cong Rep(G)$ is a Tannakian subcategory, and $\C_G\cong \I$ since $\C$ is strictly weakly integral. Let $Y$ be the unique non-integral simple object of $\C$, $F:\C\to\C_G$ be the forgetful functor, then $F(Y)=2X$ and $[F(Y)\otimes F(Y):I]=4$, where $X$ is the generator of $\I$; while $[F(Y\otimes Y):I]>4$, this is impossible.

Let $FPdim(X)\in{\{1,\sqrt{2},2}\}$, $\forall X\in\Q(\C)$. Since $\C$ contains a unique simple object $V$ of FP-dimension $2$, $sVec\nsubseteq\C'$ by \cite[Lemma 5.4]{Mu}. If $\C'=Vec$, that is, $\C$ is non-degenerate, then $\C$ is equivalent to Deligne tensor product of two Ising categories by  Lemma \ref{nondegen16}. If $FPdim(\C')\geq2$, Let $X$ be an arbitrary simple object of FP-dimension $\sqrt{2}$, assume $X\otimes X^*=I\oplus g$, then $X$ generates a generalized Tambara-Yamagami fusion category. Then \cite[Lemma 3.4]{DNS} shows that $\theta_g=-1$, and $s_{g,V}=-2$. As $sVec\nsubseteq\C'$, $\C'$ must be    Tannakian  category of FP-dimension $2$. Let $\B=\langle g\rangle\cong sVec$, then centralizer $\B'$   is a  strictly weakly integral fusion category of FP-dimension $8$ by Theorem \ref{dimensionform}  as $V\notin\B'$. So $\B'$   is a generalized Tambara-Yamagami fusion category of FP-dimension $8$,   $\B'\cong\I_2$ by Remark \ref{premodular8}. Note that M\"{u}ger center of $\B'$ is exactly symmetric fusion category $\B\vee\C'$ by Remark \ref{Tannmugdeequiv},  this contradicts to Remark \ref{N-Ising}, however.
\end{proof}

\begin{prop}Let $\C$ be  a slightly degenerate  fusion category of FP-dimension $32$. Then $\C\cong sVec\boxtimes\A$, where  $\A$ is a   non-degenerate fusion category.
\end{prop}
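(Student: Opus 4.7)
The plan is to treat integral and strictly weakly integral $\C$ separately. In the integral case, Lemma~\ref{integralslightly2^n} (applied with $n=5$) forces $\C$ to be pointed, so Proposition~\ref{slighpointed} yields $\C\cong sVec\boxtimes\A$ with $\A$ pointed non-degenerate of FP-dimension $16$.

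Assume $\C$ is strictly weakly integral. By Lemma~\ref{slightMugercenter}, $\chi\otimes X\ncong X$ for every simple $X$, so $\chi\nsubseteq X\otimes X^*$, hence $\chi\notin\C_{ad}$ and $\C_{ad}\cap\C'=Vec$. Combining Theorem~\ref{dimensionform} with $(\C_{ad})'=\C_{pt}$ from Proposition~\ref{admugercenter} gives $FPdim(\C_{ad})\cdot FPdim(\C_{pt})=32$. Since $\C_{pt}\subseteq(\C_{ad})'$, every pair of invertibles of $\C_{ad}$ centralizes each other, so $(\C_{ad})_{pt}$ is a symmetric fusion subcategory of $\C_{ad}$; and since any non-Tannakian symmetric fusion category contains $sVec$ while $\chi\notin\C_{ad}$, the category $(\C_{ad})_{pt}$ must in fact be Tannakian.

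Now dichotomize on whether $\C_{ad}$ is non-degenerate. If $(\C_{ad})_{pt}=Vec$, then $\C_{ad}\cap(\C_{ad})'=\C_{ad}\cap\C_{pt}\subseteq(\C_{ad})_{pt}=Vec$, so $\C_{ad}$ is non-degenerate, and \cite[Theorem 3.13]{DrGNO2} gives $\C\cong\C_{ad}\boxtimes\C_{pt}$. Because $sVec=\C'\subseteq\C_{pt}$ and M\"uger centers factor through Deligne tensor products, $\C_{pt}$ inherits a slightly degenerate structure, and Proposition~\ref{slighpointed} splits it as $\C_{pt}\cong sVec\boxtimes\D$ with $\D$ pointed non-degenerate. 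Setting $\A:=\C_{ad}\boxtimes\D$ then produces a non-degenerate fusion category of FP-dimension $16$ with $\C\cong sVec\boxtimes\A$, as required.

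If instead $(\C_{ad})_{pt}\ne Vec$, it contains a Tannakian subcategory $\E\cong Rep(\Z_2)$. By Remark~\ref{Tannmugdeequiv}, the de-equivariantization $\E'_{\Z_2}$ is slightly degenerate of FP-dimension $8$, so Proposition~\ref{slightdim8} gives $\E'_{\Z_2}\cong sVec\boxtimes\D$ with $\D$ non-degenerate of FP-dimension $4$. I would then lift this decomposition back through the equivariantization $\E'\cong(\E'_{\Z_2})^{\Z_2}$ to exhibit a non-degenerate fusion subcategory of $\C$ of FP-dimension $16$, and finish as in the previous paragraph. The main obstacle is precisely this lifting: the residual $\Z_2$-action on $sVec\boxtimes\D$ need not split the $sVec$-factor off, so to complete the argument one would invoke Theorem~\ref{subgpofadjoint} together with the classification of Proposition~\ref{slight16} applied to a suitable centralizer subcategory of $\C$, ruling out the remaining degenerate configurations and producing the required non-degenerate factor of FP-dimension $16$.
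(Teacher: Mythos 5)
Your argument breaks at the very first step of the strictly weakly integral case. From Lemma \ref{slightMugercenter} you correctly deduce that $\chi$ is never a direct summand of $X\otimes X^*$ for a simple $X$, but you then conclude $\chi\notin\C_{ad}$, hence $\C_{ad}\cap\C'=Vec$. This does not follow: $\C_{ad}$ is the fusion subcategory \emph{generated} by all such summands, so it is closed under tensor products, and $\chi$ may well occur inside products of objects of the form $X\otimes X^*$ even though it is not a summand of any single one. The inclusion $\C'\subseteq\C_{ad}$ is a genuine possibility for slightly degenerate categories (it is treated as a standing case throughout the paper, e.g.\ in Lemma \ref{C'subseteqC_ad} and Theorem \ref{2p^n4p^n}), and at FP-dimension $32$ it is exactly the hard configuration: the paper must rule out $FPdim(\C_{pt})=FPdim(\C_{ad})=8$ with $\C_{ad}=\C_{pt}$ symmetric containing $\C'$, which requires the analysis of $U_\C\cong\Z_4$ versus $\Z_2\times\Z_2$, the self-duality discussion for the $\sqrt{2}$-dimensional simples, Proposition \ref{slight16}, and the results of \cite{DNS}. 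By assuming $\C_{ad}\cap\C'=Vec$ you erase this case entirely, and your dimension count $FPdim(\C_{ad})\cdot FPdim(\C_{pt})=32$ (and everything downstream of it) is unjustified. A secondary error: your claim that a non-Tannakian symmetric fusion category must contain $sVec$ is false in general (e.g.\ the pointed symmetric category $Rep(\Z_4,u)$ has no fermion), so even the Tannakian-ness of $(\C_{ad})_{pt}$ would need the twist computation the paper performs via the balancing equation rather than this shortcut.

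Beyond that, your second branch is not actually a proof: after obtaining $\E'_{\Z_2}\cong sVec\boxtimes\D$ you acknowledge that lifting this splitting through the $\Z_2$-equivariantization is an unresolved obstacle and only gesture at Theorem \ref{subgpofadjoint} and Proposition \ref{slight16} as tools that ``would'' rule out the remaining configurations. That is precisely the content that has to be supplied; the paper does it by pinning down $FPdim(\C_{pt})\in\{4,8,16\}$, eliminating $FPdim(\C_{pt})=4$ and the case $FPdim(\C_{ad})=8$ by explicit counting and orthogonality/twist arguments, reducing $FPdim(\C_{pt})=16$ to the generalized Tambara--Yamagami classification (Corollary \ref{generalizedTYstru}), and in the case $FPdim(\C_{pt})=8$, $FPdim(\C_{ad})=4$ splitting on whether the $\sqrt{2}$-dimensional simples are self-dual, using \cite[Lemma 3.4]{DNS} and the centralizer of a copy of $sVec$ inside $\C_{ad}$. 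As written, your proposal both starts from an invalid reduction and leaves its remaining case open, so it does not establish the proposition.
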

\begin{proof}By Lemma \ref{integralslightly2^n}, if $\C$ is integral then $\C$ is pointed, so $\C\cong sVec\boxtimes\A$ by Proposition \ref{slighpointed}. Assume $\C$ is strictly weakly integral, and  $4|FPdim(\C_{pt})|16$. If $FPdim(\C_{pt})=4$,  it follows from Theorem \ref{dimensionform} that
$FPdim(\C_{ad})=\left\{
\begin{array}{ll}
8, & \hbox{$\C_{ad}\cap\C'=Vec$,} \\
16, & \hbox{$\C_{ad}\cap\C'=sVec$.}
\end{array}
\right.$. In the first case $8|FPdim(\C_{pt})$; in the second case $\C_{ad}$ contains exactly three simple objects of FP-dimension $2$, this contradicts to Lemma \ref{slightMugercenter}. If $FPdim(\C_{pt})=16$, then $\C_{int}=\C_{pt}$, as in Proposition \ref{slight16} $\C$ is a generalized Tambara-Yamagami fusion category, then Corollary \ref{generalizedTYstru} implies the classification.

If $FPdim(\C_{pt})=8$, by Theorem \ref{dimensionform} $FPdim(\C_{ad})=4$ or $8$. Then  a direct computation shows that $FPdim(\C_{int})=16$, and there exists non-integral $X,Y\in\Q(\C)$ such that $X\otimes Y\in\Q(\C)$. Otherwise, $\C$ contains a  generalized Tambara-Yamagami fusion category of FP-dimension $24$, impossible. We show $FPdim(\C_{ad})=4$.

If $FPdim(\C_{ad})=8$, then Lemma \ref{slightMugercenter} and Proposition \ref{admugercenter} imply that $\C_{ad}=\C_{pt}$ is symmetric, and $U_\C=\Z_4$ or $\Z_2\times\Z_2$, $FPdim(X)\in{\{1,\sqrt{2},2}\}$ ($X\in\Q(\C)$) by \cite[Corollary 5.3]{GN}. If $U_\C=\Z_4=\langle x\rangle$, then $\C=\oplus^3_{i=0}\C_{x^i}$ is generated by a non-integral simple object $X$ by \cite[Theorem 4.7]{Na}. Assume $X\in\C_{x}$, then $\C_{int}=\C_0\oplus\C_{x^2}$ for simple objects of FP-dimension $2$ are self-dual by Lemma \ref{slightMugercenter}, therefore $X\otimes X\in\C_{x^2}$ must be a simple object. While $X\otimes X^*=I\oplus g$, for some $g\neq I$ with $g\otimes g=I$, this means $[(X\otimes X)\otimes (X\otimes X)^*:I]=2$, impossible.

If $U_\C=\Z_2\times\Z_2$, then non-integral simple objects are not self-dual, otherwise $\C\cong\A_1\boxtimes\A_2\boxtimes sVec$, where $\A_1,\A_2$ are equivalent to Ising categories, but the universal grading of $\A_1\boxtimes\A_2\boxtimes sVec$ is $\Z_2\times\Z_2\times\Z_2$.  Then $\C$ contains a strictly weakly integral braided fusion category $\B$ of FP-dimension   $16$, and $\C_{pt}=\C_{ad}\subseteq\B$, so $\B$  is a generalized Tambara-Yamagami fusion category, since $U_\C=\Z_2\times\Z_2$.   In particular, $\C'\subseteq\B$ as $\B_{pt}=\C_{pt}$, so $\B$ is slightly degenerate by \cite[Lemma 4.12]{DNS}. But by  Proposition \ref{slight16} $\B\cong sVec\boxtimes \I\boxtimes\D_1$, then $\C\cong sVec\boxtimes \I\boxtimes\D_1\boxtimes\D_2$, where $\D_1,\D_2$ are pointed fusion categories of FP-dimension $2$, this is impossible as $U_\C\ncong\Z_2\times\Z_2$.

If $FPdim(\C_{pt})=8, FPdim(\C_{ad})=4$.  On the one hand, it follows from Proposition \ref{slightdim8} and \cite[Theorem 3.13]{DrGNO2} that $\C\cong\A_1\boxtimes\A_2\boxtimes sVec$ if simple objects of FP-dimension $\sqrt{2}$ are self-dual, where $\A_i$ are equivalent to Ising categories. On the other hand, assume that simple  objects of FP-dimension $\sqrt{2}$ are not self-dual. By \cite[Lemma 3.4]{DNS}, $\C_{ad}$ contains a fusion subcategory $\A$ which is braided equivalent  to $sVec$,  obviously $X\notin\A'$ if $FPdim(X)=2$ and $X\in\Q(\C)$. Therefore, Theorem \ref{dimensionform} shows that centralizer $\A'$ is a generalized Tambara-Yamagami fusion category of FP-dimension $16$, and $\C'\subseteq\C_{pt}\subseteq\A'$. Arguments in the previous paragraph says that this is impossible.
\end{proof}
\begin{coro}Let $\C$ be a strictly weakly integral non-degenerate fusion category of FP-dimension $32$, then $\C\cong \A_1\boxtimes\D$ or $\A_2\boxtimes\A_3\boxtimes\B$, where $\A_1,\A_2,\A_3$ are  braided equivalent to Ising categories, $\B,\D$ are pointed non-degenerate  fusion categories.
\end{coro}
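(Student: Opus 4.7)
The strategy is to exhibit a braided Ising subcategory $\A_1 \subseteq \C$. Since every braided Ising category is modular by Example \ref{ising}, \cite[Theorem 3.13]{DrGNO2} then yields a Deligne decomposition $\C \cong \A_1 \boxtimes \A_1'$, where $\A_1'$ is non-degenerate of FP-dimension $8$. Classifying such $\A_1'$ as either pointed of FP-dimension $8$, or of the form $\I \boxtimes \B$ with $\I$ Ising and $\B$ pointed non-degenerate of FP-dimension $2$, produces the two families in the statement (after relabeling the Ising factors).

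To find $\A_1$, I would first note that since $\C$ is modular of FP-dimension $32$, the FP-dimensions of simple objects satisfy $FPdim(X)^2\mid 32$, so they lie in $\{1,\sqrt{2},2,2\sqrt{2},4\}$, and strict weak integrality forces the existence of non-integer dimensions. Using \cite[Corollary 6.8]{GN} (so that $FPdim(\C_{pt})\cdot FPdim(\C_{ad})=32$) together with the integrality of $\C_{ad}$ \cite[Proposition 8.27]{ENO1}, the admissible values of $FPdim(\C_{pt})$ are $4$, $8$ or $16$. In each case I would carry out a case analysis in the same style as the proofs of Proposition \ref{slight16} and Corollary \ref{premodular16}, combining the balancing equation \cite[Proposition 8.13.8]{EGNO} with the $S$-matrix orthogonality of Proposition \ref{orthomatrix}, to produce a self-dual simple $X$ with $FPdim(X)=\sqrt{2}$, $X\otimes X = I\oplus g$, and $\theta_g=-1$; such an $X$ generates an Ising subcategory $\A_1$.

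To classify $\A_1'$ of FP-dimension $8$: if $\A_1'$ is integral, then $\A_1'\boxtimes sVec$ is an integral slightly degenerate fusion category of FP-dimension $16$, hence pointed by Lemma \ref{integralslightly2^n}, so $\A_1'$ itself is pointed and we get the first family $\C\cong\A_1\boxtimes\D$. If $\A_1'$ is strictly weakly integral, then $\A_1'\boxtimes sVec$ is slightly degenerate strictly weakly integral of FP-dimension $16$, and Proposition \ref{slight16} gives $\A_1'\boxtimes sVec \cong sVec\boxtimes\I\boxtimes\D$ with $\D$ pointed non-degenerate of FP-dimension $2$; isolating the $sVec$-factor by centralization (cf.\ Remark \ref{Tannmugdeequiv}) yields $\A_1'\cong \I\boxtimes\D$, and then $\C\cong\A_1\boxtimes\I\boxtimes\D$ matches the second family after relabeling $\A_1,\I$ as $\A_2,\A_3$.

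The principal obstacle is the first step, namely ruling out the possibility that every simple object of FP-dimension $\sqrt{2}$ in $\C$ is non-self-dual, which would obstruct the construction of an Ising subcategory. As in the preceding proposition, handling this requires fine $S$-matrix bookkeeping together with the structural constraints on $N$-Ising categories from Remark \ref{N-Ising} and the non-degeneracy of $\C$.
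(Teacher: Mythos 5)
There is a genuine gap at the step you yourself flag as ``the principal obstacle'': the existence of a self-dual simple object $X$ with $FPdim(X)=\sqrt{2}$, $X\otimes X=I\oplus g$, generating a braided Ising subcategory, is exactly the mathematical content of the corollary, and your proposal gives no actual argument for it. A priori you must also exclude configurations your sketch does not mention: that all non-integral simples have FP-dimension $2\sqrt{2}$ (possible when $FPdim(\C_{ad})=8$), and that every $\sqrt{2}$-simple is non-self-dual, in which case the subcategory it generates is a generalized Tambara--Yamagami category of $\I_N$-type rather than Ising and your splitting strategy stalls. The paper does not force self-duality at all. It runs a case analysis on $FPdim(\C_{pt})\in\{4,8,16\}$: for $16$ it quotes \cite[Proposition 5.1]{DongNa}; for $4$ it shows the case cannot occur, because the balancing equation makes $\C_{pt}\cong Rep(\Z_2\times\Z_2)$ Tannakian while any subcategory $\B$ generated by a $\sqrt{2}$-simple has $\B_{ad}\cong sVec$ by \cite[Lemma 3.4]{DNS} (after first excluding dimension $2\sqrt{2}$ via Corollary \ref{premodular16}); and for $8$ it centralizes $\B_{ad}\cong sVec$ to obtain a slightly degenerate category of FP-dimension $16$, to which Proposition \ref{slight16} applies --- this is how the non-self-dual possibility is absorbed, and self-duality emerges only as a consequence of the final decomposition. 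Your plan to recover all of this by ``fine $S$-matrix bookkeeping'' at dimension $32$ is a statement of intent, not a proof.

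A secondary flaw: in classifying the dimension-$8$ complement you write that from $\A_1'\boxtimes sVec\cong sVec\boxtimes\I\boxtimes\D$ one recovers $\A_1'\cong\I\boxtimes\D$ by ``isolating the $sVec$-factor by centralization''. This does not work as stated: $sVec$ is the M\"uger center of $\A_1'\boxtimes sVec$, so its centralizer is the whole category, and Deligne factors cannot simply be cancelled. The intended conclusion is true, but the clean route is the one the paper uses for Lemma \ref{nondegen16}: a strictly weakly integral non-degenerate category of FP-dimension $8$ has pointed part of dimension $4$, hence is a generalized Tambara--Yamagami category, and \cite[Theorem 5.4]{Na} (or Proposition \ref{slightdim8} after tensoring with $sVec$) gives $\I\boxtimes\D$ directly. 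Your treatment of the integral case of the complement is fine, though it is simpler to note that an integral non-degenerate category of FP-dimension $8$ is pointed because squares of dimensions of simples divide $8$.
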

\begin{proof}Since $\C$ is strictly weakly integral and modular, $FPdim(\C_{pt})|16$; as $\C_{ad}'=\C_{pt}$ \cite[Corollary 6.8]{GN} and integral non-invertible simple objects have FP-dimensions $2$ or $4$, we see $4|FPdim((\C_{ad})_{pt})$. Then $FPdim(\C_{pt})=16$ or $8$ or $4$. If $FPdim(\C_{pt})=16$, then $\C\cong\I\boxtimes\D$ by \cite[Proposition 5.1]{DongNa},  where $\D$ is pointed.

If $FPdim(\C_{pt})=4$ and $FPdim(\C_{ad})=8$. Then $\C_{pt}\subseteq\C_{ad}$ is symmetric, by \cite[Theorem 1.2]{Si} $\C_{ad}\cong \mathcal{TY}(\Z_2\times \Z_2,\tau,\mu)$ and $U_\C=\Z_2\times \Z_2$ by \cite[Theorem 6.2]{GN}; moreover balancing equation \cite[Proposition 8.13.8]{EGNO} implies that $\C_{pt}\cong Rep(\Z_2\times \Z_2)$ is Tannakian.  By \cite[Corollary 5.3]{GN}, $FPdim(X)\in{\{1,\sqrt{2},2,2\sqrt{2}}\}$, $X\in\Q(\C)$.  If there exists a simple object of FP-dimension $2\sqrt{2}$, as $U_\C=\Z_2\times\Z_2$ then $\C$ contains a pre-modular fusion category $\B$ of FP-dimension $16$, and $FPdim(Y)\in{\{1,2,2\sqrt{2}}\}$ ($Y\in\Q(\B)$); while Corollary \ref{premodular16} shows there is no such a fusion category. Then $FPdim(X)\in{\{1,2,\sqrt{2}}\}$, $X\in\Q(\C)$. Let $\B$ be a fusion subcategory generated by simple object of FP-dimension $\sqrt{2}$, then $\B$ is a generalized Tambara-Yamagami fusion category, so $\B_{ad}\cong sVec$
by \cite[Lemma 3.4]{DNS}, this contradicts to that $\C_{pt}$ is Tannakian.

If $FPdim(\C_{pt})=8$ and $FPdim(\C_{ad})=4$. Since $\C$ is not a generalized Tambara-Yamagami fusion category, there must exists simple object of FP-dimension $2$, and $FPdim(X)\in{\{1,\sqrt{2},2}\}$, $\forall X\in\Q(\C)$ by \cite[Corollary 5.3]{GN}, then $FPdim(\C_{int})=16$. Let $\B$ be a fusion subcategory generated by simple object of FP-dimension $\sqrt{2}$, then $\B$ is a generalized Tambara-Yamagami fusion category, so $\B_{ad}\cong sVec$ by \cite[Lemma 3.4]{DNS}. Therefore, centralizer $\B_{ad}'$ of $\B_{ad}$ in $\C$ is a slightly degenerate fusion category of FP-dimension $16$ by  Theorem \ref{dimensionform} and Remark \ref{Tannmugdeequiv}.   Meanwhile, it follows from Proposition  \ref{slight16} that $\B_{ad}'\cong sVec\boxtimes\D$, where $\D$ is a non-degenerate pointed fusion category.

Then $\D\cong\I\boxtimes\D_1$, where $\D_1$ is a non-degenerate pointed fusion category of FP-dimension $2$. Indeed, if $\D$ is pointed, then $\C\cong\D\boxtimes\I$ is a generalized Tambara-Yamagami fusion category, this is a contradiction. Therefore, $\C\cong\D_1\boxtimes\A_1\boxtimes\A_2$,  where $\A_1,\A_2$ are braided equivalent to Ising categories.
\end{proof}

Last we classify slightly degenerate fusion category of FP-dimension $8d$ and $16d$, where $d>1$ is an odd square-free integer.

\begin{lemm}\label{lemmastrictly8d}Let  $\C$ be a  strictly weakly integral fusion category of FP-dimension $8d$, d is an odd square-free number, then $FPdim(\C_{pt})=4m$, $m\mid d$.
\end{lemm}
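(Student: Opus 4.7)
The plan is to case-split on the intersection $\C'\cap\C_{ad}$, which has only two possibilities since $\C'\cong sVec$ has exactly two fusion subcategories. First I would pin down the dimension grading group $E$: as $\C$ is strictly weakly integral, $E$ is an elementary abelian $2$-group whose order divides $8d$, so $|E|\in\{2,4,8\}$. The chain $sVec=\C'\subseteq\C_{pt}\subseteq\C_{int}$ forces $FPdim(\C_{int})$ to be even, ruling out $|E|=8$ (which would give $FPdim(\C_{int})=d$).

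In the case $\C'\subseteq\C_{ad}$, I would rerun the numerics in the proof of Lemma \ref{C'subseteqC_ad}, which yields $FPdim(\C)=2^{2r+1}ab$ and $FPdim(\C_{pt})=2^{r+1}a$ where $|E|=2^r$. Matching $FPdim(\C)=8d$ with $d$ odd forces $r=1$ and $ab=d$, so $FPdim(\C_{pt})=4a$ with $a\mid d$ as desired.

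In the complementary case $\C'\cap\C_{ad}=Vec$, Proposition \ref{admugercenter} and Theorem \ref{dimensionform} combine to give $FPdim(\C_{ad})\cdot FPdim(\C_{pt})=8d$. I would rule out $|E|=4$ first: both $\C_{ad},\C_{pt}\subseteq\C_{int}$ have FP-dimensions dividing $FPdim(\C_{int})=2d$, so writing $FPdim(\C_{pt})=2d/x$ and $FPdim(\C_{ad})=2d/y$, the product identity forces $xy=d/2$, contradicting $d$ being odd. Hence $|E|=2$, $FPdim(\C_{int})=4d$, and $FPdim(\C_{pt})$ is an even divisor of $4d$, so of the form $2m$ or $4m$ with $m\mid d$.

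The main obstacle is excluding $FPdim(\C_{pt})=2m$ with $m$ odd. The key point: the group $G(\C_{pt})$ has order $2m$ with $\gcd(2,m)=1$, so its unique subgroup of order $2$ is $\langle\chi\rangle$; since the Case B hypothesis gives $\chi\notin\C_{ad}$, the subcategory $(\C_{ad})_{pt}=\C_{ad}\cap\C_{pt}$ avoids $\chi$ and so has odd FP-dimension dividing $m$. Meanwhile, Corollary \ref{sqdimension} together with $d$ odd and square-free forces integral simples in $\C_{ad}\subseteq\C_{int}$ to have FP-dimension $1$ or $2$, giving
\[
FPdim(\C_{ad})=FPdim((\C_{ad})_{pt})+4k
\]
for some $k\geq 0$. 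The left side equals $4d/m$, which is a multiple of $4$, while the right side is (odd) plus (multiple of $4$), hence odd. This contradiction forces $FPdim(\C_{pt})=4m$ with $m\mid d$.
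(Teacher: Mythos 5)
Your proposal is correct (reading the hypothesis, as the paper's own proof implicitly does, to include that $\C$ is slightly degenerate), and it reaches the conclusion by a noticeably different organization. The paper first rules out $8\mid FPdim(\C_{pt})$ by citing Lemma \ref{general2^ndsldege} (otherwise $\C$ would be pointed, contradicting strict weak integrality), and then kills the case $FPdim(\C_{pt})=2m$ uniformly for both positions of the M\"{u}ger center: Theorem \ref{dimensionform} gives $FPdim(\C_{ad})\in{\{4s,8s}\}$, Corollary \ref{sqdimension} forces the non-invertible simples of the integral category $\C_{ad}$ to have FP-dimension $2$, so $4\mid FPdim((\C_{ad})_{pt})$, which is incompatible with $(\C_{ad})_{pt}\subseteq\C_{pt}$ of FP-dimension $2m$ with $m$ odd. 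You instead split on $\C'\cap\C_{ad}$: the case $\C'\subseteq\C_{ad}$ is dispatched by rerunning the $2^{2r+1}ab$ count from Lemma \ref{C'subseteqC_ad} (a mechanism the paper does not use here), the exclusion of large $FPdim(\C_{pt})$ comes from your analysis of the dimension grading group $E$ rather than from Lemma \ref{general2^ndsldege}, and in the remaining case your contradiction is the same mod-$4$ count on $\C_{ad}$, except that you derive oddness of $FPdim((\C_{ad})_{pt})$ from $\chi\notin\C_{ad}$ together with uniqueness of the involution in $G(\C)$ --- a step that silently uses that $G(\C)$ is abelian (true, since $\C$ is braided), whereas the paper only needs that $FPdim((\C_{ad})_{pt})$ divides $2m$. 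Both arguments are sound; the paper's is shorter and treats the two M\"{u}ger-center cases at once, while yours avoids invoking Lemma \ref{general2^ndsldege} at the cost of extra grading-group bookkeeping.
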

\begin{proof}By Lemma \ref{general2^ndsldege}, $8\nmid FPdim(\C_{pt})$, and $FPdim(\C_{pt})=2^tm$, where $d=ms$, $t=1,2$. If $t=1$, then $FPdim(\C_{ad})=\left\{
 \begin{array}{ll}
4s, & \hbox{$\C'\cap\C_{ad}=Vec$,} \\
8s, & \hbox{$\C'\subseteq\C_{ad}$.}
\end{array}
\right.$. Since non-invertible integral simple objects of $\C$ have FP-dimension $2$ by Corollary \ref{sqdimension}, by calculating $FPdim(\C_{ad})$  we see $4|FPdim((\C_{ad})_{pt})$, this is impossible.
\end{proof}

Before we give the classification of slightly degenerate fusion categories of FP-dimension $8d$, let us recall classification of modular fusion categories $\C$ of FP-dimension $4d$, where $d$ is an odd square-free integer. This has been obtained in \cite[Theorem 3.1]{BGNPRW1}, up to braided equivalence, there are three classes: $\C$ is pointed or $\C\cong\I\boxtimes \A$ or $\C\cong \mathcal{TY}(\Z_k,\tau,\mu)^{\Z_2}\boxtimes\B$, where $\I$ is an Ising category, $\A$ and $\B$ are pointed modular fusion categories, $k\mid d$.
By \cite[Proposition 5.1]{GNN}, if fusion category $\mathcal{TY}(\Z_n,\tau,\mu)$ admits a $\Z_2$-crossed braiding structure, then $\mathcal{TY}(\Z_n,\tau,\mu)^{\Z_2}$ is modular if and only if $n$ is odd.

\begin{theo}\label{slightly8d}Slightly degenerate fusion category of FP-dimension $8d$ is equivalent to $sVec\boxtimes\A$ or $\D^{\Z_2}\boxtimes\B$, where $\A$ is a modular fusion category of FP-dimension $4d$, $\B$ is a pointed modular fusion category, $\D$ is a generalized Tambara-Yamagami fusion category of FP-dimension $4k$,  $G(\D)\cong \Z_{2k}$ and $\Gamma\cong\Z_{2k}$ or $\Z_{k}$, $k|d$ and $d$ is an odd square-free integer.
\end{theo}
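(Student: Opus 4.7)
If $\C$ is integral, Proposition \ref{integralslightly2^nd} applied with $n=3$ would force $8\mid FPdim(\C_{pt})\mid 2d$ should $\C$ fail to be pointed, which is impossible since $d$ is odd. Hence $\C$ is pointed, and Proposition \ref{slighpointed} yields $\C\cong sVec\boxtimes\A$ with $\A$ pointed non-degenerate of FP-dimension $4d$, which is the first form of the theorem.

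Assume henceforth that $\C$ is strictly weakly integral. Lemma \ref{lemmastrictly8d} gives $FPdim(\C_{pt})=4m$ with $m\mid d$, and Theorem \ref{dimensionform} combined with Proposition \ref{admugercenter} gives $FPdim(\C_{ad})=4k$, where $k:=d/m$. A key preliminary observation is that $\C$ admits no Tannakian subcategory of odd order greater than $1$: for $\E=Rep(G)\subseteq\C$ with $|G|$ odd, Remark \ref{Tannmugdeequiv} makes $\E'_G$ slightly degenerate of FP-dimension $8d/|G|^2$, which must be a positive integer; this forces $|G|^2\mid d$, and since $d$ is square-free, $|G|=1$. Consequently the odd part of $\C_{pt}$ is a pointed non-degenerate subcategory, and iterated applications of \cite[Theorem 3.13]{DrGNO2} write $\C\cong\B\boxtimes\C_0$, where $\B$ is pointed non-degenerate of some FP-dimension $d/k'$ and $\C_0$ is slightly degenerate of FP-dimension $8k'$ with $FPdim((\C_0)_{pt})=4$, for some $k'\mid d$.

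It remains to classify the reduced $\C_0$, whose pointed part has FP-dimension $4$ and contains $sVec$. Analyzing its simple objects via Corollary \ref{sqdimension}, the partition $\mathcal{O}(\C_0)=\Pi_0\cup\Pi_1$ from Section \ref{section3}, and Proposition \ref{orthomatrix}, I would distinguish two sub-cases. If $(\C_0)_{pt}$ contains a Tannakian subcategory $\E\cong Rep(\Z_2)$ distinct from $sVec$, de-equivariantizing yields $\E'_{\Z_2}$, which is slightly degenerate of smaller FP-dimension by Remark \ref{Tannmugdeequiv} and to which the earlier classifications (especially Corollary \ref{generalizedTYstru}) apply; lifting back through the equivariantization should then identify $\C_0\cong\D^{\Z_2}$ with $\D$ a generalized Tambara-Yamagami category of FP-dimension $4k'$. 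Otherwise $(\C_0)_{pt}$ is super-Tannakian of FP-dimension $4$, and I would exhibit a non-degenerate subcategory $\A'\subseteq\C_0$ of FP-dimension $4k'$, so that $\C_0\cong sVec\boxtimes\A'$ via \cite[Theorem 3.13]{DrGNO2}.

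The hard part will be pinning down the explicit invertible group $G(\D)\cong\Z_{2k}$ and stabilizer $\Gamma\in\{\Z_k,\Z_{2k}\}$ of the Tambara-Yamagami category $\D$; I would use the modular classification of FP-dimension $4d$ from \cite[Theorem 3.1]{BGNPRW1} as a structural template, reading off the required invariants from the $\mathcal{TY}(\Z_k,\tau,\mu)^{\Z_2}$ factor that appears there, and Corollary \ref{generalizedTYstru} to control the ambient generalized TY structure. A secondary technical challenge is verifying that the ``no extra Tannakian'' sub-case really produces a non-degenerate subcategory of FP-dimension $4k'$, which I expect to follow from combining the TY/Ising building blocks available in small FP-dimension (Proposition \ref{slightdim8} and Corollary \ref{generalizedTYstru}) with the partition-based orthogonality constraints of Proposition \ref{orthomatrix}.
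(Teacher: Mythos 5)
Your opening reductions are sound and essentially match the paper: the integral case via Proposition \ref{integralslightly2^nd} and Proposition \ref{slighpointed}, the absence of odd Tannakian subcategories via Corollary \ref{Tannakiandimen}, and the splitting off of an odd pointed non-degenerate factor so that one may assume $FPdim(\C_{pt})=4$ (the paper does the same reduction by stripping odd prime modular subcategories). But from there on the argument has real gaps. First, your dichotomy for the reduced category $\C_0$ is neither exhaustive nor coherent. If $(\C_0)_{pt}$ contains no non-trivial Tannakian subcategory it does not follow that it is super-Tannakian: since $\chi$ centralizes everything, the remaining possibility is $(\C_0)_{pt}\cong sVec\boxtimes(\text{semion})$, a slightly degenerate pointed category with no Tannakian subcategory at all; conversely, if $(\C_0)_{pt}$ \emph{is} super-Tannakian of FP-dimension $4$, then by \cite[Corollary 2.50]{DrGNO2} it automatically contains a Tannakian $Rep(\Z_2)$, so your second alternative is empty as a complement of the first. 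The paper sidesteps this by \emph{proving} that $\C_{ad}\cap\C_{pt}$ contains a Tannakian subcategory $\E=\langle g\rangle$: every non-invertible simple $X\in\C_{ad}$ has FP-dimension $2$ by Corollary \ref{sqdimension}, the unique non-trivial invertible $g\subseteq X\otimes X^*$ lies in $(\C_{ad})'=\C_{pt}$, and the balancing equation forces $\theta_g=1$. You would need either this argument or a separate exclusion of the semion case; as written, neither is present, and your claimed outcome $\C_0\cong sVec\boxtimes\A'$ for the "no extra Tannakian" case is exactly the assertion you admit you cannot yet verify.

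Second, and more importantly, the core of the theorem is precisely what you defer. After de-equivariantizing by $\E$, one still has to show: $\C_{int}=\E'$ with $E\cong\Z_2$ (so there is only one non-integral dimension), the orthogonality relations of Proposition \ref{orthomatrix} give $\theta_{g\otimes Y}=-\theta_Y$ for non-integral $Y$, simples of FP-dimension $2\sqrt{k}$ are impossible, in the $\sqrt{k}$-case $k=d$ with exactly four non-integral simples, in the $\sqrt{2k}$-case all non-integral simples are self-dual, the forgetful functor sends all integral simples to sums of invertibles so that $\C_{\Z_2}$ is generalized Tambara--Yamagami with $G(\D)\cong\Z_{2d}$ and $\Gamma\cong\Z_d$ or $\Z_{2d}$, and finally the self-dual/non-self-dual dichotomy yields $sVec\boxtimes\mathcal{TY}(\Z_d,\tau,\mu)^{\Z_2}$ versus $\D^{\Z_2}$, using \cite[Proposition 5.1]{GNN} and \cite[Theorem 3.1]{BGNPRW1}. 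None of this can be outsourced to Corollary \ref{generalizedTYstru}: that corollary classifies slightly degenerate \emph{braided} generalized Tambara--Yamagami categories (which are forced to be of $2$-power dimension up to a pointed modular factor), whereas $\C_{\Z_2}$ here is only a braided $\Z_2$-crossed fusion category of FP-dimension $4d$ and is not slightly degenerate. So your text is a plausible outline whose decisive steps ("pinning down $G(\D)$ and $\Gamma$", handling the complementary case) are acknowledged but not carried out, and the dichotomy on which the outline rests would itself need repair.
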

\begin{proof}If $\C$ is integral, then $\C$ is pointed  by Lemma \ref{integralslightly2^nd}, by Proposition \ref{slighpointed} $\C\cong sVec\boxtimes \A$. If $\C$ is strictly weakly integral, by Lemma \ref{lemmastrictly8d} and Theorem \ref{dimensionform}, $FPdim(\C_{pt})=4m$,
 $FPdim(\C_{ad})=\left\{
 \begin{array}{ll}
2s, & \hbox{$\C'\cap\C_{ad}=Vec$,} \\
4s, & \hbox{$\C'\subseteq\C_{ad}$.}
\end{array}
\right.$, $ms=d$.
If $s=1$, then $\C$ is nilpotent, \cite[Theorem 6.12]{DrGNO2} and Proposition \ref{slightdim8} imply that $\C\cong sVec\boxtimes\A$, where $\A:=\I\boxtimes\D$ is a non-degenerate fusion category.

If  $s>1$, since $d$ is square-free, as in Theorem \ref{slightstrictweakly} we can reduce the classification to when $d=s$. Hence we assume $FPdim(\C_{pt})=4$, $FPdim(\C_{ad})=\left\{
 \begin{array}{ll}
2d, & \hbox{$\C'\cap\C_{ad}=Vec$,} \\
4d, & \hbox{$\C'\subseteq\C_{ad}$.}
\end{array}
\right.$
In both cases, $\C_{pt}\cap\C_{ad}$ contains a Tannakian subcategory $\E$ of FP-dimension $2$. In fact, since $\C$ is strictly weakly integral, $\C_{ad}$ is integral and not pointed, by Corollary \ref{sqdimension}  any non-invertible simple object $X\in\C_{ad}$ has FP-dimension $2$, then there exists a unique $g\in \C_{pt}$ such that $I\neq g\subseteq X\otimes X^*$. By Proposition \ref{admugercenter} and balancing equation \cite[Proposition 8.13.8]{EGNO} we have $2=s_{g,X}=2\theta^{-1}_g$, i.e. $\theta_g=1$, so $(\C_{ad})_{pt}$ contains a  Tannakian subcategory $\E$ generated by $g$.

Consider the M\"{u}ger center of $\E$, we see $FPdim(\E')=4d$ and $(\E')_{\Z_2}$ is slightly degenerate fusion category by Remark \ref{Tannmugdeequiv}. Then $(\E')_{\Z_2}\cong Vec^\omega_{\Z_d}$ by Lemma \ref{general2^ndsldege}, \cite[Theorem 7.2]{NNW} shows that $\E'$ is group-theoretical. Therefore  $\C_{int}=\E'$ and $E\cong\Z_2$, that is, there only exists one non-integral FP-dimension by
\cite[Theorem 3.10]{GN}. In fact, for $FPdim(\C_{pt})=4$, the action of $\Z_2$ on non-trivial subgroups of $\Z_d$ must be non-trivial, then we have  $\C_{ad}\cong Rep(D_{2d})$ if $\C'\cap\C_{ad}=Vec$,  and similarly $\C_{int}\cong Rep(D_{4d})$ as braided fusion categories.

Case $(i)$: $FPdim(X)\in{\{1,2,\sqrt{k},2\sqrt{k}}\}$, $X\in\Q(\C)$, $k>1$ and $k\mid d$. By Proposition \ref{orthomatrix} orthogonality of characters $s_{I,-}$ and $s_{g,-}$ implies $\theta_{g\otimes Y}=-\theta_Y$, $\forall Y\in\Q(\C)$ and $FPdim(Y)\notin\Z$, as $\C_{int}\cong Rep(D_{4d})$. We show there is no simple object with FP-dimension $2\sqrt{k}$.  On the contrary, assume $FPdim(Y)=2\sqrt{k}$, and $FPdim(X)=\sqrt{k}$, $X,Y\in\Q(\C)$. Then $Y\otimes Y^*=I\oplus g\chi\oplus_{i}A_i$, where $A_i$ are simple objects of FP-dimension $2$, and  $V\otimes X\notin\Q(\C)$ with $FPdim(V)=2$. If $V\otimes X\in\Q(\C)$, since $g\otimes V\otimes X=V\otimes X$, $\theta_{g\otimes V\otimes X}=-\theta_{V\otimes X}$, impossible. Therefore, simple objects with FP-dimension $\sqrt{k}$ and integral simple objects generate a fusion subcategory $\B$, while $4d<FPdim(\B)<8d$ and $FPdim(\B)|8d$, contradiction. Then there is no simple object of FP-dimension $\sqrt{k}$, then $FPdim(\C)=4d+c(2\sqrt{k})^2$, where $c$ is the number of non-integral simple object, so $c>0$. However by Lemma \ref{slightMugercenter} $2|c$, which means $2|d$, impossible.

Therefore, $FPdim(X)\in{\{1,2,\sqrt{k}}\}$, $X\in\Q(\C)$. For any two simple objects $X,Y$ of FP-dimension $\sqrt{k}$, $X\otimes Y$ contains a unique invertible simple objects $a$ as $FPdim(X\otimes Y)$ is odd, which means $G(\C)$ acts transitively on the set of non-integral simple objects. Hence there are exactly four non-integral simple objects, by calculating the FP-dimension of $\C$, we see $k=d$. Note $(\C_{int})_{\Z_2}\cong Vec^\omega_{\Z_{2d}}$, hence  all integral simple objects of $\C$ are mapped to direct sums of invertible objects via forgetful functor  $F:\C\to\C_{\Z_2}$, $\Z_2=\langle g\rangle$, then $\C_{\Z_2}$ is a generalized Tambara-Yamagami fusion category.

If there exists a non-integral simple object $Y$ with $Y\cong Y^*$, since ${\{a\otimes Y|a\in G(\C)}\}$ are all non-integral simple objects of $\C$, $\C_{\Z_2}$ has two self-dual simple objects of FP-dimension $\sqrt{d}$. Therefore, $\C_{\Z_2}\supseteq\mathcal{TY}(\Z_d,\tau,\mu)$, which is generated by a non-integral simple object, then $\C$ contains a braided fusion subcategory $\mathcal{TY}(\Z_d,\tau,\mu)^{\Z_2}$. By \cite[Proposition 5.1]{GNN} $\mathcal{TY}(\Z_d,\tau,\mu)^{\Z_2}$ is a modular fusion category, so $\C\cong sVec\boxtimes\mathcal{TY}(\Z_d,\tau,\mu)^{\Z_2}$ by \cite[Theorem 3.13]{DrGNO2}.

If there  exists a non-self-dual simple object $Y$ of FP-dimension $\sqrt{d}$, then $Y\otimes Y=a\oplus_iA_i $ we have $a=g\chi$. Indeed if $a=g$ or $\chi$, then $g\otimes Y=Y^*$ or $\chi\otimes Y=Y^*$, while $\theta_{g\otimes Y}=\theta_{\chi\otimes Y}=-\theta_Y$, this is impossible. Therefore  $Y$, $Y^*$, $g\otimes Y$, $g\otimes Y^*$ are non-isomorphic simple objects of $\C$. Again the  de-equivariantization $\D:=\C_{\Z_2}$ is a generalized Tambara-Yamagami fusion category of FP-dimension $4d$, $\D$ has two non-integral simple objects $Y,Y^*$ of FP-dimension $\sqrt{d}$ and $G(\D)=\Z_{2d}$, so $\C\cong \D^{\Z_2}$.

Case $(ii)$: $FPdim(X)\in{\{1,2,\sqrt{2k}}\}$, $X\in\Q(\C)$, $k\mid d$. Let $FPdim(Y)=\sqrt{2k}$, then $Y\otimes Y^*=I\oplus\chi g\oplus_ia_iX_i$ and $g\otimes Y\cong \chi \otimes Y$, as $\theta_{g\otimes Y}=\theta_{\chi\otimes Y}=-1$, $\C_{int}=\C_{ad}$.  We show $FPdim(Y)\neq\sqrt{2}$ if $d>1$. Indeed, if for any non-integral simple objects  $Y$ and  $Z$ with FP-dimension $\sqrt{2}$, we have $Y\otimes Y^*=Z\otimes Z^*=I\oplus g\chi$, then  $Y\otimes Z\in\C_{pt}$. Then all the non-integral simple objects generate a generalized Tambara-Yamagami fusion subcategory $\D$ with $FPdim(\D)=4d+4$, then $4d|(4d+4)$ and $(4d+4)|8d$, which means $d=1$, i.e. $\C\cong \I\boxtimes sVec$ by Proposition \ref{slightdim8}.

Next, we show non-integral simple objects  are self-dual. Let $Y$ be a self-dual non-integral simple object and $V$ be a simple object of FP-dimension $2$, as  $V\otimes Y=g\otimes V\otimes Y$, then $V\otimes Y=Z\oplus g\otimes Z$. As $V,Y$ are self-dual and $\theta_{g\otimes Z}=-\theta_Z$, so $Z$ is also self-dual. Then self-dual simple objects generate a fusion category of $\C$, which contains $\C_{int}$. While $FPdim(\C_{int})=4d$, therefore, either all the  non-integral simple objects of $\C$ are self-dual or not self-dual. If all the non-integral simple objects of $\C$ are not self-dual, then $FPdim(\C)=4d+(\sqrt{2k})^24l$, where $l$ is the number of quadruples ${\{Y, Y^*, g\otimes Y, g\otimes Y^*}\}$, then $2|d$, impossible. Hence all the non-integral simple objects  are self-dual, and there are $\frac{2d}{k}$ non-integral simple objects.  Since $(\C_{int})_{\Z_2}\cong Vec^\omega_{\Z_{2d}}$,  $\C\cong \D^{\Z_2}$, where $\D$ is a generalized Tambara-Yamagami fusion category with $\frac{d}{k}$ self-dual simple objects of FP-dimension $\sqrt{2k}$, and $G(\D)\cong \Z_{2d}$. It follows from \cite[Proposition 5.1]{GNN} and \cite[Theorem 3.1]{BGNPRW1} that $\C$ does not contains non-integral modular fusion subcategories.
\end{proof}
Modular fusion categories of FP-dimensions $8d$ have been classified in \cite{BPR}, where $d$ is an square-free odd integer. By using same method of Theorem \ref{slightly8d}, we can also have
\begin{theo}\label{slightly16d}Slightly degenerate  fusion categories of FP-dimensions $16d$ are equivalent to $sVec\boxtimes\A_1$ or $\A_2\boxtimes\B$ or $\D^{\Z_2}\boxtimes\A_3$, where $\A_i$ are non-degenerate fusion categories, $\B$ is a slightly degenerate strictly weakly integral fusion category of FP-dimension $8k$, $\D$ is a generalized Tambara-Yamagami fusion category of FP-dimension $8l$, where $d$ is a square-free odd integer, $k\mid d$ and $l\mid d$.
\end{theo}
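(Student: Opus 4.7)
The strategy is to follow the case analysis of Theorem \ref{slightly8d} with $FPdim(\C)=2^{4}d$ throughout. The integral case is immediate: by Proposition \ref{integralslightly2^nd} a non-pointed integral slightly degenerate $\C$ would force $8\mid FPdim(\C_{pt})\mid 2^{n-2}d=4d$, which is impossible since $d$ is odd. Hence any integral such $\C$ is pointed, and Proposition \ref{slighpointed} delivers the first form $\C\cong sVec\boxtimes\A_1$.

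Now assume $\C$ is strictly weakly integral. A counting argument in the spirit of Lemma \ref{lemmastrictly8d} (use Corollary \ref{sqdimension} to restrict integral simples of $\C$ to FP-dimension $1$ or $2$, then compare $FPdim(\C_{ad})$ with $|G(\C_{ad})|$ modulo $4$ via $FPdim(\C_{ad})=|G(\C_{ad})|+4a$ to bound the $2$-adic valuation of $FPdim(\C_{pt})$) gives $FPdim(\C_{pt})\in\{4m,8m\}$ with $m\mid d$. As in the proof of Theorem \ref{slightstrictweakly}, any odd prime factor of $m$ produces a pointed modular subcategory that splits off via \cite[Theorem 3.13]{DrGNO2}; after absorbing all such factors into a non-degenerate pointed tensor factor, I may assume $\gcd(m,d)=1$, so that $FPdim(\C_{pt})\in\{4,8\}$ and the residual has the form $16d$ with $d$ square-free odd.

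When $FPdim(\C_{pt})=8$, Theorem \ref{dimensionform} gives $FPdim(\C_{ad})\in\{2d,4d\}$ and the $2$-adic piece of $\C$ matches the strictly weakly integral examples of Proposition \ref{slight16}; after peeling off a second non-degenerate pointed piece $\A_2$, the remainder is a slightly degenerate strictly weakly integral $\B$ of FP-dimension $8k$ with $k\mid d$, which is precisely the input classified in Theorem \ref{slightly8d}, giving the second form $\C\cong\A_2\boxtimes\B$. When $FPdim(\C_{pt})=4$ I will rerun Cases (i) and (ii) of Theorem \ref{slightly8d}: Proposition \ref{admugercenter} together with the balancing equation \cite[Proposition 8.13.8]{EGNO} locate a Tannakian subcategory $\E\cong Rep(\Z_2)$ inside $(\C_{ad})_{pt}$; Remark \ref{Tannmugdeequiv} and Lemma \ref{general2^ndsldege} then force $\E'_{\Z_2}$ to be pointed, so $\E'$ is group-theoretical by \cite[Theorem 7.2]{NNW}; analyzing the possible non-integer FP-dimensions $\sqrt{k},2\sqrt{k},\sqrt{2k}$ with $k\mid d$ shows that the de-equivariantization $\D:=\C_{\Z_2}$ is a generalized Tambara-Yamagami fusion category of FP-dimension $8l$ with $l\mid d$, and \cite[Proposition 5.1]{GNN} combined with \cite[Theorem 3.13]{DrGNO2} then yields the third form $\C\cong\D^{\Z_2}\boxtimes\A_3$.

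The main obstacle will be redoing the FP-dimension bookkeeping of Cases (i) and (ii) at dimension $16d$, because the a priori spectrum of non-integer simples is richer than at dimension $8d$ (mixtures of $\sqrt{k}$, $2\sqrt{k}$, and $\sqrt{2k}$ are all possible and must be eliminated individually); in Theorem \ref{slightly8d} the non-self-dual configurations were ruled out by a parity count modulo $d$ based on Lemma \ref{slightMugercenter}, and the analogous count here, at a higher $2$-adic level, must be executed carefully. The decision of whether a given $\C$ lands in the form $\A_2\boxtimes\B$ versus $\D^{\Z_2}\boxtimes\A_3$ is the point most likely to require the $S$-matrix orthogonality relations of Proposition \ref{orthomatrix} to eliminate hybrid configurations.
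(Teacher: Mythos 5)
Your integral case and the reduction that absorbs odd prime factors of $FPdim(\C_{pt})$ into a pointed modular tensor factor match the paper, but after that your case assignment goes wrong, and the real work is missing. In the paper the case $FPdim(\C_{pt})=4$ (after the reduction) is \emph{impossible}: Theorem \ref{dimensionform} gives $FPdim(\C_{ad})=4d$ or $8d$, and in the first case a counting argument forces $8\mid FPdim(\C_{pt})$, while in the second case $\C'\subseteq\C_{ad}$ and $FPdim(\C_{ad})=|G(\C_{ad})|+4b$ forces the rank of $\C_{ad}$ to be $2d+3$, which is odd and contradicts Lemma \ref{slightMugercenter} (tensoring with $\chi$ pairs up simples). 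So the subcase in which you planned to rerun Cases (i) and (ii) of Theorem \ref{slightly8d} and extract the $\D^{\Z_2}$ forms is empty, and conversely the case $FPdim(\C_{pt})=8$, which you dispatch by "peeling off a second non-degenerate pointed piece $\A_2$", is where all of the content lies. That peeling-off is unjustified: nothing guarantees that $\C_{pt}$ (of dimension $8$, containing $\C'\cong sVec$) contains a non-degenerate subcategory splitting $\C$ as $\A_2\boxtimes\B$ with $\B$ of dimension $8k$; indeed the outcomes of the paper's analysis include categories of the form $\D^{\Z_2}$ with $\D$ a generalized Tambara--Yamagami category of dimension $8d$ and $G(\D)\cong\Z_2\times\Z_{2d}$, which do not arise by tensoring a pointed piece with an $8k$-dimensional category, so your claimed factorization cannot be forced in general. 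Likewise the phrase "the $2$-adic piece of $\C$ matches Proposition \ref{slight16}" presupposes a factorization into a $16$-dimensional part and an odd part that does not exist unless $\C$ is nilpotent, which is exactly what fails here.

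What the paper actually does in the $FPdim(\C_{pt})=8$ case is the analogue of Theorem \ref{slightly8d} at the higher $2$-adic level: it locates a Tannakian $\E=Rep(\Z_2)\subseteq\C_{ad}$ via the balancing equation, shows $\E'_{\Z_2}$ is pointed (dimension $4d$, Lemma \ref{general2^ndsldege}), concludes $\E'=\C_{int}$ and $E\cong\Z_2$, and then runs two dimension-spectrum cases $\{1,2,\sqrt{k},2\sqrt{k}\}$ and $\{1,2,\sqrt{2k},2\sqrt{2k}\}$, eliminating the objects of dimension $2\sqrt{k}$ by a careful analysis of $U_\C\cong\Z_4$ versus $\Z_2\times\Z_2$ (using that $\chi$ and $\chi\otimes X$ sit in graded components, and invoking Theorem \ref{slightly8d} for the would-be $8d$-dimensional subcategory), before identifying $\C$ with $sVec\boxtimes\mathcal{TY}(\Z_d,\tau,\mu)^{\Z_2}\boxtimes\D$, an equivariantization $\D^{\Z_2}$ of a generalized Tambara--Yamagami category, or $sVec\boxtimes\I\boxtimes\A$, with \cite[Proposition 5.1]{GNN}, \cite[Theorem 3.1]{BGNPRW1} and \cite[Theorem 4.18]{BPR} used to sort the possibilities. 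You acknowledge that this bookkeeping is "the main obstacle" and do not carry it out, so even setting aside the misassigned cases, the proposal is a plan rather than a proof; the missing steps are precisely the ones that distinguish the three forms in the statement.
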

\begin{proof}If $\C$ is integral, then $\C$ is pointed  by Lemma \ref{integralslightly2^nd}. Let $\C$ be a slightly degenerate strictly weakly integral fusion category of FP-dimensions $16d$, by Corollary \ref{sqdimension}, integral simple objects of $\C$ have FP-dimension $1$ or $2$. As  in Theorem \ref{slightly8d} we assume $FPdim(\C_{pt})|16$, by Lemma \ref{general2^ndsldege}, $16\nmid FPdim(\C_{pt})$, so $FPdim(\C_{pt})=8$. Indeed, if $FPdim(\C_{pt})=4$, then by Theorem \ref{dimensionform} $FPdim(\C_{ad})=4d$ or $8d$; in the first case $8|FPdim(\C_{pt})$, while in the second case the rank of $\C_{ad}$ is $2d+3$, which contradicts to Lemma \ref{slightMugercenter}.

Since $FPdim(\C_{pt})=8$, as in Theorem \ref{slightly8d}, $\C_{ad}\supseteq\E=Rep(\Z_2)$, where $\E$ is a Tannakian subcategory and $\Z_2=\langle g\rangle$. By Remark \ref{Tannmugdeequiv} we see $\E'_{\Z_2}$ is a slightly degenerate  of FP-dimension $4d$, therefore $\E'_{\Z_2}\cong sVec\boxtimes\D$ by Proposition \ref{general2^ndsldege}, where $\D$ is a pointed modular fusion category of FP-dimension $2d$, so $\E'$ is group-theoretical by \cite[Theorem 7.2]{NNW}, then $\E=\C_{int}$ and $E=\Z_2$. Computations in Theorem \ref{slightly8d} shows that $g\otimes V=V$ if $V$ is an integral non-invertible simple object and that $\theta_{g\otimes V}=-\theta_V$ for all non-integral simple objects. However, now $\C_{int}\ncong Rep(D_{8d})$ as $G(Rep(D_{8d}))\cong\Z_2\times\Z_2$.

Case $(i)$: $FPdim(X)\in{\{1,2,\sqrt{k},2\sqrt{k}}\}$, $\forall X\in\Q(\C)$. The same way as Theorem \ref{slightly8d}, one  can show that either all the non-integral simple objects have FP-dimension $\sqrt{k}$ or $2\sqrt{k}$. If $FPdim(X)\in{\{1,2,2\sqrt{k}}\}$, then $FPdim(\C_{ad})=4d$ and $|U_\C|=4$ as $\C'\subseteq\C_{ad}$. If $U_\C=\Z_2\times\Z_2$, then $\C$ contains a strictly weakly integral fusion category $\A$ of FP-dimension $8d$ and $\C_{ad}\subseteq\A$, so $FPdim(\A')=4$; note $\C'\subseteq\A'$, therefore either $\A$ is slightly degenerate or $\A'$ contains a non-trivial Tannakian subcategory. While the first case can not happen by Theorem \ref{slightly8d} and considering the de-equivariantization the second case implies $\A$ is integral, impossible. If $U_\C=\Z_4$, then $\C=\oplus^3_{i=0}\C_{a^i}$ and $\C_{pt}\subseteq\C_{0}\oplus\C_{a^2}=\A$, indeed one can show $\A=\C_{int}$. Then $\C_a$ contains $\frac{d}{k}$ non-integral simple objects, this is impossible as $\chi\in\C_0$ and $\chi\otimes X\in\C_0$ for any $X\in\C_a$. Therefore  $FPdim(X)\in{\{1,2,\sqrt{k}}\}$, $\forall X\in\Q(\C)$, and $k=d$ as Theorem \ref{slightly8d}. If $\C$ contains a self-dual non-integral simple object, then $\C\cong sVec\boxtimes \mathcal{TY}(\Z_d,\tau,\mu)^{\Z_2}\boxtimes\D$ by \cite[Proposition 5.1]{GNN}, $\D$ is pointed modular of FP-dimension $2$; if not, then either $\C$ contains an even metaplectic modular fusion category of FP-dimension $8d$ \cite[Theorem 4.18]{BPR} or $\C\cong\D^{\Z_2}$, where $\D$ is a generalized Tambara-Yamagami fusion category with $G(\D)=\Z_2\times\Z_{2d}$ and $\Gamma=\Z_d$.

Case $(ii)$: $FPdim(X)\in{\{1,2,\sqrt{2k},2\sqrt{2k}}\}$, $\forall X\in\Q(\C)$. Like Theorem \ref{slightly8d} then one  can show that all the non-integral simple objects have FP-dimension $\sqrt{2k}$, and $k>1$ if $d\neq1$; when $d=1$, this is Proposition \ref{slight16}. Therefore $\C\cong sVec\boxtimes\I\boxtimes \A$ or $\C\cong\D^{\Z_2}$, where $\A$ is a pointed modular fusion category of FP-dimension $2d$, $\D$ is a generalized Tambara-Yamagami fusion category with $G(\D)=\Z_2\times\Z_{2d}$ and $\Gamma=\Z_{2d}$. Then it follows from \cite[Theorem 3.1]{BGNPRW1} and \cite[Theorem 4.18]{BPR} that $\C$ does not contain  modular fusion subcategories with $FPdim(X)\in{\{1,2,\sqrt{2k}}\}$ and $k>1$.
\end{proof}
\section*{Acknowledgements}
The author is deeply grateful to Professor V. Ostrik for suggesting this problem and weekly insightful conversations. Without his influence, this article would not have been written. He  thanks
A. Schopieray for comments on previous version of this paper. He also thanks the referee for pointing errors in previous version. This paper was written during a visit of the author at University of Oregon supported by China Scholarship Council (Grant No. 201806140143), he  appreciates the Department of Mathematics for their warm hospitality.

\bigskip
\author{Zhiqiang Yu\\ \thanks{Email:\,zhiqyu-math@hotmail.com}
\\{\small School of Mathematical Sciences,  East China Normal University,
Shanghai 200241, China}
}
\end{document}